\renewcommand\theequation{\thesection.\@arabic\c@equation}
\chardef\csname pre amssym.def
\def\undefine#1{\let#1\undefined}
\def\newsymbol#1#2#3#4#5{\let\next@\relax
	\ifnum#2=\@ne\let\next@\msafam@\else
	\ifnum#2=\tw@\let\next@\msbfam@\fi\fi
	\mathchardef#1="#3\next@#4#5}
\def\mathhexbox@#1#2#3{\relax
	\ifmmode\mathpalette{}{\m@th\mathchar"#1#2#3}%
	\else\leavevmode\hbox{$\m@th\mathchar"#1#2#3$}\fi}
\def\hexnumber@#1{\ifcase#1 0\or 1\or 2\or 3\or 4\or 5\or 6\or 7\or 8\or
	9\or A\or B\or C\or D\or E\or F\fi}
\font\teneufm=eufm10 \font\seveneufm=eufm7 \font\fiveeufm=eufm5
\newtheorem{theorem}{Theorem}[section]
\newtheorem{lemma}[theorem]{Lemma}
\newtheorem{corollary}[theorem]{Corollary}
\newtheorem{definition}[theorem]{Definition}
\newtheorem{remark}[theorem]{Remark}
\renewcommand{\theequation}{\thesection.\arabic{equation}}
\begin{document}
	
	\title[optimal control for weak solutions of the Keller-Segel]{Bilinear optimal control for weak solutions of the Keller-Segel logistic  model  in $2D$ domains}
	
	\author[\,\,\,\,\,\,\,\,\,P. Braz e Silva]{P. Braz e Silva}
	\address[P. Braz e Silva]{Departamento de Matem\'atica. Universidade Federal de Pernambuco, 
		CEP 50740-560, Recife - PE. Brazil}
	\email{pablo.braz@ufpe.br}
	
	\author[F. Guill\'en-Gonz\'alez]{F. Guill\'en-Gonz\'alez}
	\address[F. Guill\'en-Gonz\'alez]{Dpto. de Ecuaciones Diferenciales y An\'alisis Num\'erico and IMUS, Universidad de Sevilla, 
		Sevilla. Spain}
	\email{guillen@us.es}
	
	\author[C. Perusato]{Cilon F. Perusato}
	\address[C. F. Perusato]{Departamento de Matem\'atica. Universidade Federal de Pernambuco, 
		CEP 50740-560, Recife - PE. Brazil}
	\email{cilon.perusato@ufpe.br}
	
	\author[M.A. Rodr\'{\i}guez-Bellido ]{M.A. Rodr\'{\i}guez-Bellido}
	\address[M.A. Rodr\'{\i}guez-Bellido]{Dpto. de Ecuaciones Diferenciales y An\'alisis Num\'erico and IMUS, Universidad de Sevilla, 
		Sevilla. Spain}
	\email{angeles@us.es}
	
	\thanks{P. Braz e Silva was partially supported by by CAPES--PRINT - 88881.311964/2018--01, 
		CAPES-MATHAMSUD $\#$88881.520205/2020-0, and CNPq, Brazil, $\#$308758/2018-8 and $\#$432387/2018-8}
	
	\thanks{F. Guill\'en-Gonz\'alez and M.A. Rodr\'{\i}guez-Bellido acknowledges funding from 
		Grant PGC2018-098308-B-I00 (MCI/AEI/FEDER, UE), 
		Grant US-1381261 (US/JUNTA/FEDER, UE) and Grant P20$_-$01120  (PAIDI/JUNTA/FEDER, UE)} 
	
	\thanks{C. Perusato was partially supported by CAPES--PRINT - 88881.311964/2018--01 and Propesq-UFPE - 08-2019 (Qualis A). He is also grateful for the warm hospitality and the wonderful academic atmosphere during his visit at the Universidad de Sevilla, where this work was started.}
	
	%\thanks{M. A. Rodr\'{\i}guez-Bellido was partially supported by {\color{red} ????}}
	\keywords{Chemotaxis model, logistic reaction, weak solutions, bilinear
		optimal control, optimality conditions}
	
	\subjclass[2021]{35K51, 35Q92, 49J20, 49K20 }
	
	\date{\today}
	
	%{\sf AMS Mathematics Subject Classification:}
	%35B40  (primary), 35Q35 (secondary)  \\
	%\nl
	%\mbox{} \vspace{-0.800cm} \\
	%
	%{\sf Key words: Asymptotic behavior, decay rates, magneto-micropolar equations} .  
	%incompressible Navier-Stokes equations,
	%Leray-Hopf's (weak) solutions, \linebreak
	%\mbox{} \hfill
	%large time behavior,
	%Leray's $L^{2}\!\;\!$ conjecture,
	%energy estimates, heat semigroup. \\
	
	%{\sf AMS Mathematics Subject Classification:}
	%35B40  (primary), 35Q35 (secondary)  \\
	%\nl
	%\mbox{} \vspace{-0.800cm} \\
	%
	%{\sf Key words: Asymptotic behavior, decay rates, magneto-micropolar equations} .  
	%\usepackage[colorlinks=true,citecolor=red,linkcolor=blue,urlcolor=RubineRed,pdfpagetransition=Blinds,pdftoolbar=false,pdfmenubar=false]{hyperref}
	
	%
	%    Por defecto, con spanish-babel, se utiliza la coma decimal "baja" (3,14159)
	%    que es lo recomendado.
	%    Si se quiere "poder escribir" 3.14159 en modo matem{\'a}tico
	%    se usa la orden siguiente
	%
	%\spanishdecimal{.}
	%
	% En hojas de problemas normalmente no hay sangr{\'\i}a de inicio de p{\'a}rrafo
	%
	\parindent=0pt
	\def\b{{\boldsymbol b}}
	\def\n{{\boldsymbol n}}

	\begin{abstract}
		An optimal control problem associated to the Keller-Segel with logistic reaction system  will be studied in $2D$ domains.  The control acts in a bilinear form only in the chemical equation.  The existence of optimal control and a necessary optimality system are deduced.
		The main novelty is that control can be rather singular and the state (cell density $u$ and the chemical concentration  $v$) remains only in a weak setting, which is not usual in the literature to solve optimal control problems subject to chemotaxis models (see e.g. \cite{guillen-mallea-rodriguez}).
	\end{abstract}
	
	\maketitle

	\section{Introduction}
	
	\subsection{The controlled model}
	In this work we study an optimal control problem for the (attractive or repulsive) 
	Keller-Segel model  in a $2D$ domain $\Omega\subset\mathbb{R}^2$ with logistic source term and  bilinear control acting on the chemical equation:
	\begin{equation}\label{KS}
		\left\{\begin{array}{rcll}
			\partial_t u -\Delta u + \kappa \nabla \cdot (u \, \nabla v) &=&
			r \, u- \mu \, u^2 & \mbox{in $\Omega \times (0,T)$,}\\
			%\noalign{\vspace{-1ex}}\\
			\partial_t v -\Delta v + v &=& u + f \, v\, 1_{\Omega_c}& \mbox{in $\Omega \times (0,T)$,}\\
			%\noalign{\vspace{-1ex}}\\
			\partial_\n u= \partial_\n v &=&0  & \mbox{on $\partial\Omega \times (0,T)$,}\\
			%\noalign{\vspace{-1ex}}\\
			u(0,\cdot)=u_0\ge 0, \quad v(0,\cdot)&=&v_0\ge 0 & \mbox{in $\Omega$.}
		\end{array}\right.
	\end{equation}
	Here, $f:Q_c:=(0,T)\times \Omega_c\to \mathbb{R}$ is the control with $\Omega_c\subset \Omega\subset \mathbb{R}^2$ the control domain,  and the states $u,v:Q:=(0,T)\times \Omega\to \mathbb{R}_+^2$ are the cellular density and chemical concentration, respectively. 
	Moreover, $r\in \mathbb{R}$ and $\mu >0$ are coefficients of the logistic reaction, and $\kappa \in \mathbb{R}$ is the chemotaxis coefficient ($\kappa>0$ models  attraction and $\kappa<0$ repulsion). We are interested in the study of a control problem associated to the following weak solution concept  of  system (\ref{KS}). Hereafter, $L^{2+}$ means $L^{2+\varepsilon}$ for $\varepsilon $ small enough.
	\begin{definition}\label{strong}
		Let $f\in L^{2+}(Q_c):=L^{2+}(0,T;L^{2+}(\Omega_c))$, $u_0\in L^2(\Omega)$, $v_0\in W^{1+,2+}(\Omega)$	
		with $u_0\ge 0$ and $v_0\ge 0$ a.e.~in $\Omega$. A pair $(u,v)$
		is called a weak solution of problem (\ref{KS}) in $(0,T)$, if 
		\begin{eqnarray*}
			&& u\ge0, \quad v\ge 0\quad \hbox{a.e. in $Q=(0,T)\times \Omega$,} \\
			&&u\in W_2:=\{u\in C([0,T];L^2(\Omega))\cap L^2(0,T;H^1(\Omega)),\ \partial_tu\in L^2(0,T;H^1(\Omega)')\},\label{st-1}\\
			&&v\in X_{2+}:=\{v\in C([0,T];W^{1+,2+}(\Omega))\cap L^{2+}(0,T;W^{2,2+}(\Omega)),\ \partial_tv\in L^{2+}(Q)\},\label{st-11}
		\end{eqnarray*}
		the equation (\ref{KS})$_{1}$ jointly boundary condition for $u$ hold in a variational sense,  while equation (\ref{KS})$_{2}$ and the boundary condition for $v$ are satisfied  pointwisely, %a.e.~$(t,x)\in Q$, 
		and initial conditions (\ref{KS})$_3$ and (\ref{KS})$_4$ are satisfied in $L^2(\Omega)$ and $W^{1+,2+}(\Omega)$, respectively.
	\end{definition}
	
	Notice that, since we are in $2D$ bounded domains, $v\in C([0,T];W^{1+,2+})$ implies 
	$v\in L^\infty(0,T;L^\infty)$, 
	%for any $p<\infty$, 
	hence using that $f\in L^{2+}(Q_c)$ one has  $fv\in L^{2+}(Q)$. That means that $v\in X_{2+}$ is the maximal regularity which can be obtained.
	%In fact, hypothesis $f\in L^2(0,T;L^{2+}(\Omega_c))$  is enough to get $fv\in L^2(Q)$, but for simplicity, we will consider $f\in L^{2+}(Q_c)$.
	The previous weak  regularity for $u\in W_2$ will be enough to solve the following  optimal control problem. 
%		This is not the case of other similar chemotaxis systems (see \cite{guillen-mallea-rodriguez}), for which the existence of a more regular solution is needed.
%		The logistic term considered in the equation for the density of cells $u$ plays a crucial role.
%	
%	\
%	
%	
%	We focus on the following control problem:
	\begin{equation}\label{func}
		\left\{
		\begin{array}{l}
			\mbox{Find  }(u,v,f)\in W_2\times X_{2+}\times\mathcal{F}\mbox{ minimizing the functional }\\
			\noalign{\vspace{-1ex}}\\
			J(u,v,f):=\displaystyle\frac{\gamma_u}{2}\int_0^T\|u(t)-u_d(t)\|^{2}_{L^{2}{(\Omega)}}dt \\
			%\noalign{\vspace{-1ex}}\\
			%\hspace{2cm}
			+\displaystyle\frac{\gamma_v}{2}\int_0^T\|v(t)-v_d(t)\|^2_{L^2{(\Omega)}}dt
			%\\
			%\noalign{\vspace{-1ex}}\\
			%\hspace{2cm}
			+\displaystyle\frac{\gamma_f}{{2+}}\int_0^T\|f(t)\|^{2+}_{L^{2+}(\Omega_c)}dt\\
			\noalign{\vspace{-1ex}}\\
			\mbox{  subject to $(u,v,f)$ be a weak solution of the PDE system (\ref{KS}),}
		\end{array}
		\right.
	\end{equation}
	where $(u_d,\, v_d)\in L^2(Q)^2$ represent  the target states 
	%{\color{blue}\st{(see the beginning of the proof of Theorem} \ref{regularity_lagrange} \st{below to justify the regularity
			%required for} $u_d\in L^{26/7}(Q)$)
		and the nonnegative numbers $ \gamma_u$, $\gamma_v$ and $\gamma_f$ measure the cost of the states and control,
		respectively. With respect to the control constraint,  we assume 
		\begin{equation*}\label{F-def}
			\mathcal{F}\subset L^{2+}(Q_c) \quad \mbox{ be a
				nonempty, closed and convex set.} 
		\end{equation*}
		
		\
		
		The functional $J$ defined in (\ref{func}) describes the deviation of the cell density $u$ 
		and the chemical concentration $v$
		from a target  cell density $u_d$ and chemical concentration $v_d$, respectively; plus  the cost of the control $f$ measured in the $L^{2+}$-norm.
		
		\subsection{Previous results}
\definecolor{airforceblue}{rgb}{0.36, 0.54, 0.66}
%{\color{airforceblue}
Before continuing our discussion, let us give some motivation for the study of \eqref{KS}.
In the last decades, there has been a surge of activity on the study of the chemotaxis model which describes the movement of the cells directed by the concentration gradient of a
chemical substance in their environment. Moreover, it is important to consider the biological situation where the bacterial
population may proliferate according to a logistic law. An interesting feature in chemotaxis corresponds
to the movement of cells directed by the gradient of the chemical signal which is produced by cells themselves. When one considers also the interactions between cells and the chemical signal with liquid environments, one gets the chemotaxis-fluid system, which is basically the chemotaxis model coupled with the Navier-Stokes equations. For more details, see  the excellent review \cite{BBTW} and the references therein.  

\

Let us recall some issues  related to the uncontrolled equations \eqref{KS}, i.e., when $ f \equiv 0 $. Plenty of results have been obtained here. Amongst the many articles related to the uncontrolled sysstem, let us mention those on existence of weak and strong solutions in $ \mathbb{R}^2 $. In this case, without considering logistic reaction (i.e. $ r = \mu = 0 $), the existence of global weak solutions was provided by Liu and Lorz  \cite{Liu-Lorz-2011}. In two-dimensional bounded convex domains, the existence of (global) classical solutions was obtained by M. Winkler \cite{winkler2012}. 
In the presence of logistic source, the existence of global weak solutions (and the long time behavior) has been analyzed in \cite{Lankeit2016} by J. Lankeit. In this case, the existence of global mild solutions was examined in \cite{Rodriguez-Ferreira-Roa}. For 3D domains, we also refer \cite{winklerJFA2019} and the references therein.   
		
\
		
It is important to mention that remarkable progress has been made in mathematical and numerical analysis  of optimal
control problems for viscous flows described by the Navier-Stokes equations and other related models, see e.g., \cite{Abergel, casas, Zepeda-Torres-Roa}. 
However, the literature related to optimal control for chemotaxis problems  is still scarce.
In \cite{rodriguez-rueda-villamizar}, the authors study an optimal (distributed) control problem where the state problem is given by a stationary chemotaxis model coupled with the Navier-Stokes equations. We note that in \cite{chaves-guerrero-1, chaves-guerrero-2} the authors provide some results related to the controllability for the nonstationary Keller-Segel system and the nonstationary chemotaxis-fluid model with
consumption of chemoattractant substance associated to a chemotaxis system, based on Carleman-type estimates for the solutions of the adjoint system. 	
Recently, a bilinear optimal control problem associated to the chemotaxis-Navier-Stokes model (without logistic source) in bounded three-dimensional domains was examined in \cite{Lopez-Roa}. For the chemo-repulsion case, this problem was studied in \cite{guillen-mallea-rodriguez,guillen-mallea-rodriguez3} for $2D$ and $3D$ domains respectively, and in \cite{guillen-mallea-villamizar} for $2D$ domains with a potential nonlinear production term, that is changing the production term $u$ in the $v$ equation by $u^p$, with $p>1$.

		\subsection{Main contributions of the paper}

		First of all, we will prove the existence and uniqueness of weak solutions of (\ref{KS}) and the continuous dependence of the weak solution $(u,v)$ respect the control $f$.  
		%Specifically, we will prove the following result:
		
		\begin{theorem}\label{weak_solution}
			Let $u_0\in L^{2}(\Omega)$, $v_0\in  W^{1+,2+}(\Omega)$ with $u_0\ge 0$ and $v_0\ge 0$ in $\Omega$, and $f\in L^{2+}(Q_c)$. 
			There exists a unique weak solution $(u,v)$ of system (\ref{KS}) in sense of Definition \ref{strong}. Moreover, there exists a positive constant
			$$
			\mathcal{K}_1:=\mathcal{K}_1(r,\mu,\kappa,\vert \Omega\vert,T,\|u_0\|_{L^2},\|v_0\|_{W^{1+,2+}},\|f\|_{L^{2+}(Q_c)}),
			$$ 
			such that
			\begin{equation}\label{bound_solution}
				\|(u,v) \|_{W_2 \times X_{2+}}
				\le \mathcal{K}_1.
			\end{equation}
			where we denote 
			$$
			\|(u,v) \|_{W_2 \times X_{2+}}
			:=
			\|(\partial_tu,\partial_tv)\|_{L^2(H^1)'\times L^{2+}(L^{2+})}+\|(u,v)\|_{C(L^2\times W^{1+,2+})}
			$$
			$$
			+\|(u,v) \|_{L^2(H^1)\times L^{2+}(0,T;W^{2,2+})}
			$$
			Finally, for any $r,\mu,\kappa, \Omega,T,u_0,v_0$, the constant  $\mathcal{K}_1$ is bounded if $f$ is bounded in $L^{2+}(Q_c)$.
		\end{theorem}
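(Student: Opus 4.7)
The plan is to combine a Galerkin approximation with a bootstrap between the logistic absorption in the $u$--equation and parabolic maximal regularity for the $v$--equation, and then to obtain uniqueness and the continuous dependence of $\mathcal{K}_1$ on $\|f\|_{L^{2+}(Q_c)}$ from an energy estimate for the difference of two solutions. I would first introduce approximate solutions $(u_m, v_m)$ by projecting onto the Neumann Laplacian eigenbasis (or by a Leray--Schauder fixed-point construction that treats the $v$--equation via its $L^{2+}$-maximal regularity), getting local-in-time existence by an ODE/contraction argument; nonnegativity is preserved by testing with $u^-$ and $v^-$ since the chemotaxis and logistic terms vanish at $u=0$ and the $v$--source is nonnegative modulo the linear term $fv$, handled by Gronwall.

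The heart of the proof is the chain of a priori estimates. Integration of the $u$--equation against $1$ together with the logistic absorption yields $u\in L^\infty(0,T;L^1(\Omega))\cap L^2(Q)$. Testing with $u$, integrating the chemotaxis term by parts, and using $\int_\Omega u\,\nabla u\cdot\nabla v = -\tfrac12\int_\Omega u^2\Delta v$ gives
\begin{equation*}
\tfrac{1}{2}\tfrac{d}{dt}\|u\|_{L^2}^2 + \|\nabla u\|_{L^2}^2 + \mu\|u\|_{L^3}^3 = -\tfrac{\kappa}{2}\int_\Omega u^2\,\Delta v + r\|u\|_{L^2}^2.
\end{equation*}
The bad term is bounded in $2D$ by $\|u\|_{L^4}^2\|\Delta v\|_{L^2}\le C\|u\|_{L^2}\|\nabla u\|_{L^2}\|\Delta v\|_{L^2}$ via Gagliardo--Nirenberg, and absorbed into $\|\nabla u\|_{L^2}^2$ and $\|u\|_{L^2}^2\|\Delta v\|_{L^2}^2$. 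To close the loop I feed $u\in L^2(Q)$ into the $v$--equation, use $v\in C([0,T];W^{1+,2+})\hookrightarrow L^\infty(L^\infty)$ in $2D$ together with $f\in L^{2+}(Q_c)$ to obtain $fv\,1_{\Omega_c}\in L^2(Q)$, and invoke $L^2$ parabolic maximal regularity to get $\Delta v\in L^2(Q)$. Gronwall then yields $u\in L^\infty(L^2)\cap L^2(H^1)$, hence $u\in L^4(Q)\subset L^{2+}(Q)$ by a second Gagliardo--Nirenberg. A second application of maximal regularity, this time in $L^{2+}$, upgrades $v$ to the full space $X_{2+}$ required by Definition \ref{strong}.

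Passage to the limit $m\to\infty$ is routine once the bounds are uniform: Aubin--Lions compactness gives strong convergence of $u_m$ in $L^2(Q)$ and of $\nabla v_m$ in $L^{2+}(Q)$, which is enough to pass to the limit in the nonlinear terms $\nabla\cdot(u\nabla v)$, $u^2$ and $fv\,1_{\Omega_c}$; the bound (\ref{bound_solution}) then follows by lower semicontinuity. For uniqueness and continuous dependence on $f$, I would take two weak solutions $(u_i,v_i)$ with controls $f_i$, set $U=u_1-u_2$, $V=v_1-v_2$, $F=f_1-f_2$, and test the $U$-equation with $U$ and the $V$-equation with $-\Delta V$ (or combinations thereof). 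The cross terms $\int U\,\nabla u_2\cdot\nabla V$, $\int u_1\,\nabla U\cdot\nabla V$ and the perturbation $\int Fv_1\,V\,1_{\Omega_c}$ are controlled using the regularity of $(u_i,v_i)$ just proved and $2D$ Sobolev embeddings, and a Gronwall argument yields a stability estimate of the form $\|U\|_{W_2}+\|V\|_{X_{2+}}\le C\|F\|_{L^{2+}(Q_c)}$, giving uniqueness when $F\equiv 0$ and the claimed boundedness of $\mathcal{K}_1$ as a function of $\|f\|_{L^{2+}(Q_c)}$.

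The main obstacle I foresee is precisely the circular nature of this bootstrap: the $L^2$ energy estimate for $u$ requires $\Delta v\in L^2(Q)$, which needs $u\in L^2(Q)$ already, while upgrading $v$ to $X_{2+}$ demands more than $L^2$-integrability of $u$, supplied in turn by $2D$ Gagliardo--Nirenberg and the logistic absorption $-\mu u^2$. Arranging the estimates in the correct order and tracking how the constants depend on $\|f\|_{L^{2+}(Q_c)}$ (as the last sentence of the statement requires) is the most delicate part of the argument; the uniqueness estimate, requiring the subtraction of two chemotaxis terms, is also technically demanding because one must pay careful attention to which factor carries the gradient.
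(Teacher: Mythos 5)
Your overall strategy coincides with the paper's: nonnegativity by testing with negative parts, the logistic absorption giving $u\in L^\infty(L^1)\cap L^2(Q)$ first, then energy estimates for $v$, then the $L^2$--energy estimate for $u$ using a $2D$ Gagliardo--Nirenberg interpolation of the chemotaxis term, and finally $L^{2+}$ parabolic maximal regularity to place $v$ in $X_{2+}$. (The paper constructs solutions by Leray--Schauder on a decoupled linearized system rather than by Galerkin, but that is an inessential difference, and you mention the fixed-point alternative yourself.)

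There is, however, one genuine flaw in the order of your estimates. To obtain $\Delta v\in L^2(Q)$ you write that $fv\,1_{\Omega_c}\in L^2(Q)$ because ``$v\in C([0,T];W^{1+,2+})\hookrightarrow L^\infty(0,T;L^\infty)$''. That embedding is exactly the conclusion of the \emph{final} regularity step ($v\in X_{2+}$), which in turn requires $u\in L^{2+}(Q)$, which requires the $L^\infty(L^2)\cap L^2(H^1)$ bound on $u$, which requires $\Delta v\in L^2(Q)$. So as written your bootstrap is circular at precisely the point you did not flag (you worried about $\Delta v$ vs.\ $u\in L^2(Q)$, which is in fact fine since the logistic term gives $u\in L^2(Q)$ unconditionally). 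The correct way to break the loop, as in the paper, is \emph{not} to pass through $fv\in L^2(Q)$ and maximal regularity at this stage, but to run the energy estimate directly: test the $v$--equation with $v$ and with $-\Delta v$ and bound
\begin{equation*}
\Big|\int_\Omega f\,v\,\Delta v\Big|\le \|f\|_{L^{2+}}\,\|v\|_{L^{q}}\,\|\Delta v\|_{L^2}\le C\,\|f\|_{L^{2+}}\,\|v\|_{H^1}\,\|\Delta v\|_{L^2},
\end{equation*}
using the $2D$ embedding $H^1(\Omega)\hookrightarrow L^q(\Omega)$ for the finite exponent $q$ conjugate to $2+\varepsilon$; Young's inequality and Gronwall with the integrable weight $\|f(t)\|_{L^{2+}}^2$ then give $v\in L^\infty(H^1)\cap L^2(H^2)$ using only $u\in L^2(Q)$. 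Only after that do you get $u\in L^\infty(L^2)\cap L^2(H^1)\hookrightarrow L^4(Q)$ and, since now $v\in L^\infty(0,T;H^1)\hookrightarrow L^\infty(0,T;L^q)$ for all finite $q$, the source $u+fv$ lands in $L^{2+}(Q)$ and Feireisl--Novotn\'y's maximal regularity yields $v\in X_{2+}$. A second, smaller point: deducing the boundedness of $\mathcal{K}_1$ in $\|f\|_{L^{2+}(Q_c)}$ from a stability estimate for the difference of two solutions is shakier than it looks, because the Gronwall constant in that difference estimate already involves the norms of the solutions themselves; the clean route is simply to track the dependence on $\|f\|_{L^{2+}}$ through the a priori estimates above, which is what the theorem's last sentence is really asking for.
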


		%Once the existence of solution for problem \eqref{KS} and the continuous dependence of such solution $(u,v)$ from the control $f$ are established in Theorem \ref{weak_solution}, 
		The second main result of this paper will be the existence of a global optimal solution of (\ref{func}):
		\begin{theorem}\label{existence_solution}
			Let 
			%$u_0\in H^1(\Omega)$ and 
			$(u_0,  v_0)\in L^2(\Omega)\times  W^{1+,2+}(\Omega)$ with $u_0\ge0$ and $v_0\ge 0$ in $\Omega$. Assuming that either $\gamma_f>0$ or $\mathcal{F}$ is bounded in
			$L^{2+}(Q_c)$, then the bilinear optimal control problem (\ref{func}) has at least one global optimal solution $(\tilde{u},\tilde{v},\tilde{f})$.
		\end{theorem}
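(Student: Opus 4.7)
The plan is the standard direct method of the calculus of variations. First I would check that the admissible set is nonempty: picking any $f_0\in\mathcal{F}$, Theorem \ref{weak_solution} produces a weak solution $(u_0^\star,v_0^\star)$, so $(u_0^\star,v_0^\star,f_0)$ is admissible. Since $J\ge 0$, an infimum $m:=\inf J$ exists, and I would take a minimizing sequence $\{(u_n,v_n,f_n)\}\subset W_2\times X_{2+}\times\mathcal{F}$ with $J(u_n,v_n,f_n)\to m$.

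Next, I would extract uniform bounds. If $\gamma_f>0$, then boundedness of $J(u_n,v_n,f_n)$ forces $\{f_n\}$ to be bounded in $L^{2+}(Q_c)$; otherwise, the assumption that $\mathcal{F}$ is bounded gives the same conclusion. Theorem \ref{weak_solution} then yields $\|(u_n,v_n)\|_{W_2\times X_{2+}}\le \mathcal{K}_1$ uniformly in $n$. Extracting subsequences, I obtain limits $\tilde u\in W_2$, $\tilde v\in X_{2+}$, $\tilde f\in L^{2+}(Q_c)$ with $u_n\rightharpoonup\tilde u$ in $W_2$, $v_n\rightharpoonup\tilde v$ in $X_{2+}$ and $f_n\rightharpoonup\tilde f$ in $L^{2+}(Q_c)$. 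Since $\mathcal{F}$ is closed and convex in $L^{2+}(Q_c)$, it is weakly closed, so $\tilde f\in\mathcal{F}$. By Aubin--Lions compactness I also obtain $u_n\to\tilde u$ strongly in $L^2(Q)$ and $v_n\to\tilde v$ strongly in $L^{2+}(0,T;W^{1,2+}(\Omega))$, together with a.e.\ convergence (up to subsequence), which preserves the pointwise signs $\tilde u\ge 0$, $\tilde v\ge 0$.

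The main obstacle is passing to the limit in the nonlinear terms so that $(\tilde u,\tilde v,\tilde f)$ actually satisfies (\ref{KS}) in the sense of Definition \ref{strong}. For the chemotaxis term $\nabla\cdot(u_n\nabla v_n)$, the strong convergence $u_n\to\tilde u$ in $L^2(Q)$ combined with the strong convergence $\nabla v_n\to\nabla\tilde v$ in $L^2(Q)$ gives $u_n\nabla v_n\to\tilde u\nabla\tilde v$ in $L^1(Q)$, which is enough to pass in the variational formulation of (\ref{KS})$_1$ tested against smooth functions. For the logistic term $u_n^2$, the $2D$ embedding $L^\infty(L^2)\cap L^2(H^1)\hookrightarrow L^4(Q)$ bounds $u_n^2$ in $L^2(Q)$ uniformly, and a.e.\ convergence together with Vitali's theorem yields strong convergence in $L^p(Q)$ for any $p<2$, enough to pass to the limit. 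For the bilinear product $f_n v_n\,1_{\Omega_c}$, the weak convergence of $f_n$ in $L^{2+}(Q_c)$ combined with the strong convergence of $v_n$ in $L^{2+}(Q_c)$ gives $f_n v_n\rightharpoonup\tilde f\tilde v$ in $L^1(Q_c)$ (in fact in $L^{1+}$), which suffices to pass in (\ref{KS})$_2$ pointwise in the sense of Definition \ref{strong}. Initial conditions pass via the weak continuity inherited from $W_2$ and $X_{2+}$.

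Finally, since $J$ is the sum of squared $L^2$-norms in $(u,v)$ and a convex power of an $L^{2+}$-norm in $f$, it is convex and strongly continuous on $L^2(Q)\times L^2(Q)\times L^{2+}(Q_c)$, hence weakly lower semicontinuous. Therefore
$$
J(\tilde u,\tilde v,\tilde f)\le\liminf_{n\to\infty}J(u_n,v_n,f_n)=m,
$$
so $(\tilde u,\tilde v,\tilde f)$ is a global optimal solution of (\ref{func}).
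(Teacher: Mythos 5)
Your proposal is correct and follows essentially the same route as the paper: nonemptiness of the admissible set via Theorem \ref{weak_solution}, a minimizing sequence with uniform bounds in $W_2\times X_{2+}\times L^{2+}(Q_c)$ coming from $\gamma_f>0$ (or boundedness of $\mathcal{F}$) together with estimate (\ref{bound_solution}), weak/strong compactness to pass to the limit in the chemotaxis and bilinear terms, weak closedness of $\mathcal{F}$, and weak lower semicontinuity of $J$. The only cosmetic difference is that the paper extracts somewhat stronger convergences ($u_m\to\tilde u$ in $L^{4-}(Q)$, $v_m\to\tilde v$ in $L^\infty(Q)$, $\nabla v_m\to\nabla\tilde v$ in $L^{4+}(Q)$), which give strong $L^2(Q)$ convergence of $u_m\nabla v_m$ and weak $L^{2+}(Q)$ convergence of $f_mv_m$ directly, whereas you reach the same conclusion with weaker convergences plus a density argument in the test functions.
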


		Finally, we obtain the existence and uniqueness of Lagrange multipliers associated to the optimal control (\ref{func}):
		
		\begin{theorem}\label{Th:Optimality-System}
			Let $\tilde{s}=(\tilde{u},\tilde{v},\tilde{f})\in\mathcal{S}_{ad}$ be a local optimal solution for the control problem (\ref{func}). Then, there exists a unique Lagrange multiplier
			$(\lambda,\eta)\in X_{2}\times W_{2}$ satisfying the  optimality system
			\begin{equation}\label{R5}
				\left\{
				\begin{array}{rcl}
					-\partial_t\lambda-\Delta\lambda+
					\kappa\,
					\nabla\lambda\cdot\nabla\tilde{v}-\eta&=&\gamma_u (\tilde{u}-u_d)\ \ \mbox{ in  Q,}\\
					-\partial_t\eta-\Delta\eta-
					\kappa \,
					\nabla\cdot(\tilde{u}\nabla\lambda)+\eta-\tilde{f}\,\eta\, 1_{\Omega_c}&=&\gamma_v(\tilde{v}-v_d)\ \ \mbox{ in  Q,}\\
					\lambda(T)=0,\ \eta(T)&=&0\quad  \mbox{ in }\Omega,\\
					\dfrac{\partial\lambda}{\partial\n}=0,\ \dfrac{\partial\eta}{\partial\n}&=&0\quad\mbox{ on }(0,T)\times\partial\Omega,\\
					\displaystyle\int_0^T\int_{\Omega_c}(\gamma_f {\rm sgn} \tilde{f} |\tilde{f}|^{1+}+\tilde{v}\,\eta)(f-\tilde{f})&\ge&0\ \ \ \forall f\in\mathcal{F}.
				\end{array}
				\right.
			\end{equation}
		\end{theorem}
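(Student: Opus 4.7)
I would apply a Lagrange multiplier theorem of Zowe--Kurcyusz type, along the lines of \cite{guillen-mallea-rodriguez}. Introduce the constraint operator
\begin{equation*}
G(u,v,f)=\bigl(G_1(u,v,f),\,G_2(u,v,f),\,u(0)-u_0,\,v(0)-v_0\bigr),
\end{equation*}
with $G_1=\partial_tu-\Delta u+\kappa\nabla\!\cdot\!(u\nabla v)-ru+\mu u^2$ and $G_2=\partial_tv-\Delta v+v-u-fv\,1_{\Omega_c}$, regarded as
\begin{equation*}
G\colon W_2\times X_{2+}\times L^{2+}(Q_c)\longrightarrow L^2(0,T;H^1(\Omega)')\times L^{2+}(Q)\times L^2(\Omega)\times W^{1+,2+}(\Omega).
\end{equation*}
By Definition \ref{strong}, $(u,v,f)\in\mathcal{S}_{ad}$ iff $G(u,v,f)=0$ and $f\in\mathcal{F}$, so (\ref{func}) fits into the abstract inequality-constrained framework.

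The first step is to check that $J$ and $G$ are of class $C^1$. For $J$ only the term $\tfrac{\gamma_f}{2+}\|f\|_{L^{2+}}^{2+}$ is nontrivial; its Gâteaux derivative is $\gamma_f\,\mathrm{sgn}\,\tilde f\,|\tilde f|^{1+}$, matching the first factor of (\ref{R5})$_5$. For $G$, the bilinear terms $u\nabla v$, $fv$ and the quadratic $u^2$ are continuous in the stated norms via the $2D$ embeddings ($W_2\hookrightarrow L^4(Q)$ and $X_{2+}\hookrightarrow L^\infty(Q)$ with $\nabla v$ sufficiently integrable) that already underpin Theorem \ref{weak_solution}, so Fréchet differentiability follows from routine multilinear estimates.

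The main obstacle is the regular-point condition at $\tilde s$: the image $G'(\tilde s)\bigl(W_2\times X_{2+}\times\mathcal{C}(\tilde f)\bigr)$ must cover the whole target space, where $\mathcal{C}(\tilde f)$ denotes the conic hull of $\mathcal{F}-\tilde f$. Choosing the control variation $F=0$, this reduces to solving, for arbitrary data $(g_1,g_2,U_0,V_0)$, the linearised system
\begin{equation*}
\left\{
\begin{array}{l}
\partial_tU-\Delta U+\kappa\nabla\!\cdot\!(U\nabla\tilde v)+\kappa\nabla\!\cdot\!(\tilde u\nabla V)-rU+2\mu\tilde u\,U=g_1,\\
\partial_tV-\Delta V+V-U-\tilde f V\,1_{\Omega_c}=g_2,\\
U(0)=U_0,\quad V(0)=V_0,\quad \partial_\n U=\partial_\n V=0,
\end{array}
\right.
\end{equation*}
in $W_2\times X_{2+}$. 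I would construct this solution by the same Galerkin/fixed-point scheme underlying Theorem \ref{weak_solution}: the coefficients $\tilde u,\tilde v,\tilde f$ lie precisely in the spaces needed to close the energy estimates there, and the chemotactic contribution is now linear in $(U,V)$, which only simplifies matters.

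Once the regular-point condition is verified, Zowe--Kurcyusz yields multipliers $(\lambda,\eta)$ with $\lambda\in L^2(0,T;H^1(\Omega))$ and $\eta\in(L^{2+}(Q))^*$ such that
\begin{equation*}
DJ(\tilde s)[(U,V,F-\tilde f)]-\langle(\lambda,\eta),\,DG(\tilde s)[(U,V,F-\tilde f)]\rangle\ge 0
\end{equation*}
for every admissible variation. Choosing $F=\tilde f$ and arbitrary $(U,V)\in W_2\times X_{2+}$ and integrating by parts in time and space transfers derivatives onto $(\lambda,\eta)$, producing the two adjoint PDEs of (\ref{R5}) together with the terminal conditions $\lambda(T)=\eta(T)=0$ and the homogeneous Neumann conditions on $\partial\Omega$; taking then $(U,V)=(0,0)$ and letting $F$ vary in $\mathcal{F}$ yields the variational inequality (\ref{R5})$_5$. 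The regularity upgrade $(\lambda,\eta)\in X_2\times W_2$ is obtained by reading the adjoint system backwards in time as a linear parabolic problem whose coefficients inherit the regularity of $\tilde u,\tilde v,\tilde f$, and invoking the same maximal-regularity estimates as in Theorem \ref{weak_solution}. Uniqueness is immediate: the difference of two multipliers solves the adjoint system with zero right-hand side and zero terminal data, so a Gronwall argument run from $t=T$ forces it to vanish.
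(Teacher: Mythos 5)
Your overall strategy coincides with the paper's: reformulate \eqref{func} in the Zowe--Kurcyusz framework, verify Fr\'echet differentiability of $J$ and $G$, establish the regular-point condition by solving the linearized system with $F=0$ (the paper does this by a Leray--Schauder fixed point, decoupling $V$ then $U$, exactly in the spirit of Theorem \ref{weak_solution}), and then read off the adjoint system and the variational inequality from the abstract multiplier rule. One cosmetic difference: you place the initial conditions inside the constraint operator $G$, whereas the paper absorbs them into the affine convex set $\mathbb{M}=(\hat u,\hat v,0)+\widehat W_2\times\widehat X_{2+}\times\mathcal{F}$ of functions with zero initial data; both are standard and equivalent here.

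There is, however, a genuine gap in your last step. The multiplier delivered by the abstract theorem lives only in $L^2(0,T;H^1)\times L^{2-}(Q)$ and satisfies the adjoint equations merely in a very weak (transposition) sense, i.e.\ through the identities \eqref{M2}--\eqref{M3}. You cannot ``read the adjoint system backwards in time and invoke maximal regularity'' on such an object, because maximal-regularity theorems produce a \emph{new} solution with data in the right spaces; they do not upgrade a distributional solution unless you first know it coincides with that new solution. Likewise, your uniqueness argument (``the difference solves the homogeneous adjoint system, so Gronwall forces it to vanish'') presupposes enough regularity to run energy estimates, which $\eta\in L^{2-}(Q)$ does not have. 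The paper closes this gap in two moves: it constructs, by a separate Leray--Schauder argument on the time-reversed adjoint system \eqref{R1}, a regular solution $(\overline{\lambda},\overline{\eta})\in X_2\times W_2$; and it then identifies the abstract multiplier with this regular solution by testing the difference of the two weak formulations against the solution $(U,V)$ of the linearized problem \eqref{C9} for \emph{arbitrary} data $(g_u,g_v)$ --- this is where the surjectivity lemma is used a second time --- obtaining $\int_0^T\langle g_u,\lambda-\overline{\lambda}\rangle+\int_0^T\int_\Omega g_v(\eta-\overline{\eta})=0$ for all $(g_u,g_v)$, hence $(\lambda,\eta)=(\overline{\lambda},\overline{\eta})$. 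Both the regularity claim $(\lambda,\eta)\in X_2\times W_2$ and the uniqueness assertion of the theorem rest on this duality identification, which your proposal omits.
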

		
		\begin{remark}
			If $\gamma_f>0$ and  there is no convexity constraint on the control, that is, $\mathcal{F}\equiv L^{2+}(Q_c)$, then optimality condition (\ref{R5})$_5$ becomes 
			$$
			\gamma_f {\rm sgn} \tilde{f} |\tilde{f}|^{1+} 1_{\Omega_c}+\tilde{v}\,\eta\, 1_{\Omega_c}=0.
			$$
			%	Thus, it is not possible to get an explicit expression of the control $\tilde{f}$ in function of the states and multipliers.
		\end{remark}
		%\begin{remark}
		%Note that the physically relevant case, 
		%where point-wise control constraints are imposed,  is a  particular case of the previous analysis because the set  
		%$$\mathcal{F}=\{f\in L^{2+}(Q_c):\, -\infty<a\le f(t,x)\le b<+\infty\  a.e.~(t,x)\in (0,T)\times \Omega_c \}$$ 
		%is a nonempty closed convex set in $L^{2+}(Q_c)$.
		%\end{remark}
		
		\
		
		The rest of the paper is organized as follows. In Section \ref{Se:Preliminaires} some Preliminairy results  which will be used later are introduced. The proofs of Theorems \ref{weak_solution}, \ref{existence_solution} and \ref{Th:Optimality-System} 
		are given in Sections \ref{Sec:weak},   \ref{Sec:existence} and   \ref{Sec:LM}  respectively.
		
		\section{Preliminary results}\label{Se:Preliminaires}
		Along this manuscript  the following result on $L^p$ regularity 
		%and interpolations
		 will be considered.
		\begin{theorem}[\cite{feireisl}, page 344] \label{feireisl}
			For $\Omega\in C^2$, let $1<p<3$, $u_0\in {W}^{2-2/p,p}(\Omega)$ and  $g\in L^p(Q)$. Then the problem
			\begin{equation*}
				\left\{
				\begin{array}{rcl}
					\partial_tu-\Delta u&=&g \quad\mbox{ in }Q,\\
					u(0,\cdot)&=&u_0\quad\mbox{ in }\Omega,\\
					\dfrac{\partial u}{\partial{\bf n}}&=&0\quad\mbox{ on }(0,T)\times\partial\Omega,
				\end{array}
				\right.
			\end{equation*}
			admits a unique solution $u$ such that
			\begin{equation*}
				u\in  C([0,T];{W}^{2-2/p,p})\cap L^p(W^{2,p}),
				\quad \partial_tu\in L^p(Q).
			\end{equation*}
			Moreover, there exists a positive constant $C:=C(p,\Omega,T)$ such that
			\begin{equation*}\label{des-regularity}
				\|u\|_{C({W}^{2-2/p,p})}+\|\partial_tu\|_{L^p(Q)}+\|u\|_{L^p(W^{2,p})}
				\le C(\|g\|_{L^p(Q)}+\|u_0\|_{{W}^{2-2/p,p}}).
			\end{equation*}
		\end{theorem}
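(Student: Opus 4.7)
The strategy is to treat the problem via the operator-theoretic framework for maximal $L^p$ regularity of the Neumann Laplacian on a $C^2$ domain. I would split the solution as $u=u^{(1)}+u^{(2)}$, where $u^{(1)}$ takes care of the initial datum ($g\equiv 0$, $u^{(1)}(0)=u_0$) and $u^{(2)}$ absorbs the source ($g\neq 0$, $u^{(2)}(0)=0$).

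First I would set up the abstract framework. Let $A:=-\Delta_{\boldsymbol n}$ be the Neumann Laplacian on $L^p(\Omega)$ with domain $D(A)=\{w\in W^{2,p}(\Omega):\partial_{\boldsymbol n} w=0\text{ on }\partial\Omega\}$. For $\Omega\in C^2$ and $1<p<\infty$ it is classical (Agmon--Douglis--Nirenberg elliptic regularity plus standard semigroup theory) that $A+I$ is sectorial and generates an analytic, exponentially stable $C_0$-semigroup $\{e^{-tA}\}_{t\ge 0}$ on $L^p(\Omega)$. The real-interpolation identification
\[
(L^p(\Omega),D(A))_{1-1/p,p}=W^{2-2/p,p}_{\boldsymbol n}(\Omega)
\]
(with the Neumann trace understood whenever $2-2/p>1+1/p$, which explains the upper bound $p<3$ in the statement) will be used to characterise the initial trace space.

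Second, I would treat $u^{(1)}(t)=e^{-tA}u_0$. Analyticity of the semigroup plus the interpolation identity above yield
\[
\|u^{(1)}\|_{C([0,T];W^{2-2/p,p})}+\|\partial_t u^{(1)}\|_{L^p(Q)}+\|u^{(1)}\|_{L^p(W^{2,p})}\le C\,\|u_0\|_{W^{2-2/p,p}},
\]
which is a textbook estimate once the generator property of $A$ is in place.

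Third, the core step is the inhomogeneous piece $u^{(2)}(t)=\int_0^t e^{-(t-s)A}g(s)\,ds$: I need to prove the maximal $L^p$ regularity estimate
\[
\|\partial_t u^{(2)}\|_{L^p(Q)}+\|A\,u^{(2)}\|_{L^p(Q)}\le C\,\|g\|_{L^p(Q)}.
\]
This is the main obstacle. I would establish it by invoking either the Dore--Venni theorem, after showing $A$ has bounded imaginary powers of power angle $<\pi/2$ on $L^p(\Omega)$, or equivalently Weis's $R$-boundedness criterion, using that the Neumann Laplacian on a $C^2$ domain admits a bounded $H^\infty$-calculus of angle $<\pi/2$ (this is well documented in the Kunstmann--Weis survey and in Amann's monograph). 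Once this inequality is in hand, elliptic regularity upgrades $\|A u^{(2)}\|_{L^p(Q)}$ to $\|u^{(2)}\|_{L^p(W^{2,p})}$, and the continuous embedding
\[
L^p(0,T;D(A))\cap W^{1,p}(0,T;L^p(\Omega))\hookrightarrow C([0,T];(L^p(\Omega),D(A))_{1-1/p,p})
\]
gives $u^{(2)}\in C([0,T];W^{2-2/p,p})$ with the corresponding estimate.

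Finally, adding the bounds for $u^{(1)}$ and $u^{(2)}$ produces the stated inequality and, since the decomposition is unique, both existence and uniqueness follow. The only genuinely delicate ingredient is the maximal regularity of the Neumann Laplacian; the rest is bookkeeping with interpolation and semigroup identities.
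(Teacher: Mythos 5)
There is nothing to compare on the paper's side: Theorem \ref{feireisl} is quoted as a black box from Feireisl and Novotn\'y \cite{feireisl}, and no proof is given or attempted anywhere in the manuscript. Judged on its own, your outline is the standard modern proof of maximal parabolic $L^p$-regularity for the Neumann problem and is essentially correct: the splitting into $e^{-tA}u_0$ plus the Duhamel term, the identification of the time-trace space as the real interpolation space $(L^p(\Omega),D(A))_{1-1/p,p}=W^{2-2/p,p}(\Omega)$ carrying no boundary condition precisely because $p<3$ gives $2-2/p<1+1/p$ (which is indeed the whole point of the restriction on $p$: no compatibility condition on $u_0$), maximal $L^p$-regularity of the Neumann Laplacian on a $C^2$ domain via bounded $H^\infty$-calculus or Dore--Venni/Weis, and the mixed-derivative embedding of $L^p(0,T;D(A))\cap W^{1,p}(0,T;L^p(\Omega))$ into $C([0,T];(L^p(\Omega),D(A))_{1-1/p,p})$. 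Two small inaccuracies are worth fixing. First, the Neumann Laplacian has the constants in its kernel, so the heat semigroup is bounded analytic but not exponentially stable; this is harmless on the finite interval $(0,T)$ but is the reason the constant $C$ must depend on $T$, as the statement records. Second, uniqueness does not follow from ``uniqueness of the decomposition'' (any solution could a priori fail to be of Duhamel form); it follows from the elementary energy or semigroup argument for the homogeneous problem with zero data, and that half-line should be said explicitly. With those repairs the argument is complete and is, in substance, the proof standing behind the cited reference.
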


		\
		
	The existence of solutions of some initial and boundary-value problems will be proven by means of:		
		\begin{theorem}[Leray-Schauder fixed-point Theorem] \label{LSFP}
			Let $\mathcal{X}$ a Banach space and $T: \, \mathcal{X} \rightarrow \mathcal{X}$ a continuous and compact operator. If the set 
			$$
			\{
			x \in \mathcal{X}: \, x=\alpha \, Tx \quad \mbox{for some $0\le \alpha \le 1$}
			\}
			$$
			is bounded, then $T$ has (at least) a  fixed point.
		\end{theorem}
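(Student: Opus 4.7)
The plan is to reduce the statement to Schauder's fixed point theorem applied to a suitable radial modification of $T$ on a closed ball, combined with the a priori bound on the homotopy solution set $\{x\in\mathcal{X}:x=\alpha Tx,\ 0\le\alpha\le 1\}$.

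First I would invoke the hypothesis to fix $R>0$ such that every $x$ satisfying $x=\alpha Tx$ for some $\alpha\in[0,1]$ lies in the open ball $B_R:=\{x\in\mathcal{X}:\|x\|<R\}$. Next I would introduce the radial retraction $\rho:\mathcal{X}\to\overline{B_R}$ defined by $\rho(y):=y$ when $\|y\|\le R$ and $\rho(y):=Ry/\|y\|$ otherwise; this map is continuous on $\mathcal{X}$ (in fact $1$-Lipschitz) and sends bounded sets into bounded sets. The composition $S:=\rho\circ T$ is then a continuous self-map of the nonempty closed convex bounded set $\overline{B_R}$, and it is compact because $T$ is compact and $\rho$ is continuous.

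At this stage I would apply Schauder's fixed point theorem to $S$ to produce $x^\ast\in\overline{B_R}$ with $x^\ast=\rho(Tx^\ast)$. To upgrade this to a fixed point of $T$ itself, I would argue by contradiction that $\|Tx^\ast\|\le R$. Suppose instead $\|Tx^\ast\|>R$; then $x^\ast=\rho(Tx^\ast)=\alpha Tx^\ast$ with $\alpha:=R/\|Tx^\ast\|\in(0,1)$, so $x^\ast$ belongs to the homotopy solution set and the hypothesis forces $\|x^\ast\|<R$. But by the definition of $\rho$ one has $\|x^\ast\|=R$, contradicting the previous strict inequality. Hence $Tx^\ast\in\overline{B_R}$, so $\rho(Tx^\ast)=Tx^\ast$ and $x^\ast=Tx^\ast$, which is the desired fixed point.

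The main obstacle is that the proof leans crucially on Schauder's fixed point theorem, itself a nontrivial consequence of Brouwer's theorem together with a finite-dimensional approximation procedure for compact operators on Banach spaces. Since this is a classical result and the present paper uses Theorem \ref{LSFP} only as a black-box tool in the Leray--Schauder approach to the nonlinear parabolic system (\ref{KS}), I would simply cite a standard reference (e.g.\ Evans, or Gilbarg--Trudinger) rather than reprove Schauder. A subtler point I would be careful to record is that the a priori bound has to place the homotopy solutions \emph{strictly} inside the chosen ball (radius $R$, with strict inequality) so that the contradiction in the final step actually goes through; enlarging $R$ by a fixed amount (say working on $\overline{B_{R+1}}$ instead) is a convenient device to guarantee this strictness without loss of generality.
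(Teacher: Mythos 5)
Your proof is correct: it is the standard derivation of the Leray--Schauder (Schaefer) alternative from Schauder's fixed-point theorem via the radial retraction onto a closed ball, and the paper itself gives no proof of Theorem \ref{LSFP} --- it is quoted as a classical black-box tool, so there is nothing to compare against. The only inessential slip is the parenthetical claim that the radial retraction $\rho$ is $1$-Lipschitz: in a general Banach space it is only Lipschitz with constant $2$ (the constant $1$ requires a Hilbert norm), but your argument uses nothing beyond continuity of $\rho$, so the proof is unaffected.
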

		
		\bigskip
		
		In this paper, the following two compactness results will be applied.
		\begin{theorem}[Aubin-Lions lemma] (See \cite[Th\'eor\`eme 5.1, p. 58]{lions}.)\label{AL} 
			Let $\mathcal{X}$, $B$ and $\mathcal{Y}$ be reflexive Banach spaces such that $\mathcal{X} \subset B \subset \mathcal{Y}$, 
			with compact embedding $\mathcal{X}\mapsto B$ and continuous embedding $B \hookrightarrow \mathcal{Y}$.
			It is defined 
			$$
			W=\{w:\, w\in L^{p_0}(0,T;\mathcal{X})), \, \partial_t w \in L^{p_1}(0,T;\mathcal{Y})\}
			$$ 
			for a finite $T>0$ and $p_0$, $p_1 \in (1,+\infty)$. Then, the injection of $W$ into $L^{p_0}(0,T;B)$ is compact.
		\end{theorem}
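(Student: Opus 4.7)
The plan is to combine a Fr\'echet--Kolmogorov type compactness criterion in $L^{p_0}(0,T;B)$ with Ehrling's interpolation lemma, which is the standard route to this classical result.

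First I would establish Ehrling's lemma: for every $\epsilon>0$ there exists $C_\epsilon>0$ such that
$$
\|v\|_B \le \epsilon \,\|v\|_{\mathcal{X}} + C_\epsilon \,\|v\|_{\mathcal{Y}} \qquad \forall v \in \mathcal{X}.
$$
This is obtained by contradiction from the two embedding hypotheses: a normalized counterexample sequence, bounded in $\mathcal{X}$, would be relatively compact in $B$, must then converge to zero in $\mathcal{Y}$ by the continuous embedding $B\hookrightarrow \mathcal{Y}$, and therefore converge to zero in $B$ as well, contradicting the normalization.

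Next, I would take a bounded sequence $\{w_n\} \subset W$ and, by reflexivity plus Banach--Alaoglu, extract a subsequence with $w_n \rightharpoonup w$ in $L^{p_0}(0,T;\mathcal{X})$ and $\partial_t w_n \rightharpoonup \partial_t w$ in $L^{p_1}(0,T;\mathcal{Y})$. Replacing $w_n$ by $w_n - w$, I may assume the weak limit is zero and aim to show $w_n \to 0$ strongly in $L^{p_0}(0,T;B)$. I would verify the two conditions of the Bochner compactness criterion: first, for every $0 \le t_1 < t_2 \le T$, the set $\{\int_{t_1}^{t_2} w_n(t)\,dt\}_n$ is bounded in $\mathcal{X}$ and hence relatively compact in $B$; second, the time translates satisfy
$$
\|w_n(t+h) - w_n(t)\|_{\mathcal{Y}} \le \int_t^{t+h} \|\partial_s w_n(s)\|_{\mathcal{Y}}\, ds,
$$
so H\"older in time gives a uniform modulus of continuity in $L^{p_1}(0,T-h;\mathcal{Y})$ vanishing as $h\to 0^+$. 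Applying Ehrling,
$$
\|\tau_h w_n - w_n\|_{L^{p_0}(0,T-h;B)} \le \epsilon\, \|\tau_h w_n - w_n\|_{L^{p_0}(0,T-h;\mathcal{X})} + C_\epsilon\, \|\tau_h w_n - w_n\|_{L^{p_0}(0,T-h;\mathcal{Y})},
$$
where the first term is absorbed by the uniform $L^{p_0}(\mathcal{X})$ bound upon choosing $\epsilon$ small, and the second is made small by choosing $h$ small. Equicontinuity together with precompactness of the integral means then yields $w_n \to 0$ strongly in $L^{p_0}(0,T;B)$.

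The main obstacle I expect is the mismatch between the exponents $p_0$ and $p_1$ at the level of the time integration, since the translation estimate naturally produces a modulus of continuity in $L^{p_1}(\mathcal{Y})$ rather than in $L^{p_0}(\mathcal{Y})$. I would handle this either by H\"older interpolation between the $L^{p_1}(\mathcal{Y})$ bound on translates and the uniform $L^{p_0}(B) \hookrightarrow L^{p_0}(\mathcal{Y})$ bound, or by invoking Simon's refined compactness theorem that directly accommodates distinct exponents. Once this exponent matching is carried out, the remaining ingredients reduce to a routine use of Ehrling's lemma and the Bochner compactness criterion.
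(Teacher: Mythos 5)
The paper does not prove this lemma at all --- it is quoted as a classical result with a citation to Lions's book --- so there is no internal proof to compare against; your argument is the standard Lions--Simon proof (Ehrling's interpolation inequality plus a translation/Kolmogorov-type compactness criterion in Bochner spaces) and it is correct. The only remark worth making is that the ``exponent mismatch'' you flag as the main obstacle is in fact harmless: H\"older in time gives the uniform pointwise bound $\|w_n(t+h)-w_n(t)\|_{\mathcal{Y}} \le h^{1-1/p_1}\,\|\partial_t w_n\|_{L^{p_1}(0,T;\mathcal{Y})}$ for every $t$, and since $p_1>1$ this already controls $\|\tau_h w_n - w_n\|_{L^{p_0}(0,T-h;\mathcal{Y})}$ on the bounded time interval with a modulus vanishing as $h\to 0^+$, so no further interpolation between exponents is needed.
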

		
		\begin{theorem}[Simon's compactness result] (See \cite[Corollary 4]{simon}.) \label{Simon-t}
			Let $\mathcal{X}$, $B$ and $\mathcal{Y}$ be Banach spaces such that $\mathcal{X} \subset B \subset \mathcal{Y}$, 
			with compact embedding $\mathcal{X}\mapsto B$ and continuous embedding $B \hookrightarrow \mathcal{Y}$. 
			Let $F$ be a bounded set in $L^{\infty}(0,T;\mathcal{X})$ such that the set  
			$\partial_t F=\{ \frac{\partial f}{\partial t}; \, f\in F \}$ 
			is bounded in $L^r(0,T;\mathcal{Y})$ for some $r>1$. Then $F$ is relatively compact in $C([0,T];B)$. 
		\end{theorem}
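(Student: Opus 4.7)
The plan is to reduce the claim to an Ascoli–Arzelà argument in $C([0,T];B)$, which requires proving two things for the family $F$: pointwise relative compactness in $B$ at each time, and equicontinuity into $B$. Both of these will follow from combining the uniform $\mathcal{X}$-bound, the time regularity from the bound on $\partial_t F$, and the classical interpolation lemma of Lions that says, under a compact embedding $\mathcal{X} \hookrightarrow B$ and continuous $B \hookrightarrow \mathcal{Y}$, for every $\eta>0$ there exists $C_\eta>0$ such that
\begin{equation*}
\|x\|_B \le \eta\,\|x\|_{\mathcal{X}} + C_\eta\,\|x\|_{\mathcal{Y}} \qquad \forall x \in \mathcal{X}.
\end{equation*}
Note first that since $\partial_t f \in L^r(0,T;\mathcal{Y})$ with $r>1$, each $f \in F$ can be identified with a function in $W^{1,r}(0,T;\mathcal{Y}) \hookrightarrow C([0,T];\mathcal{Y})$, so pointwise values $f(t)$ are well defined, and the $L^\infty(0,T;\mathcal{X})$ bound will be used as a pointwise $\mathcal{X}$-bound after possibly redefining $f$ on a null set.

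The first step is pointwise relative compactness: for each fixed $t\in[0,T]$, the set $\{f(t):f\in F\}$ is bounded in $\mathcal{X}$ (by the constant $M:=\sup_{f\in F}\|f\|_{L^\infty(0,T;\mathcal{X})}$), and since $\mathcal{X}\hookrightarrow B$ is compact it is relatively compact in $B$. The second step is equicontinuity. For $0\le t_1 < t_2 \le T$, write
\begin{equation*}
f(t_2)-f(t_1) = \int_{t_1}^{t_2}\partial_t f(s)\,ds \quad\text{in}\;\mathcal{Y},
\end{equation*}
and apply Hölder's inequality with exponent $r>1$ to obtain
\begin{equation*}
\|f(t_2)-f(t_1)\|_{\mathcal{Y}} \le \|\partial_t f\|_{L^r(0,T;\mathcal{Y})}\,(t_2-t_1)^{1-1/r} \le K\,(t_2-t_1)^{1-1/r},
\end{equation*}
where $K$ is a uniform bound of $\partial_t F$ in $L^r(0,T;\mathcal{Y})$. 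This yields uniform Hölder continuity into $\mathcal{Y}$, but not yet into $B$.

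To upgrade to equicontinuity in $B$, apply the Lions interpolation inequality to $x=f(t_2)-f(t_1)\in\mathcal{X}$:
\begin{equation*}
\|f(t_2)-f(t_1)\|_{B} \le \eta\,\|f(t_2)-f(t_1)\|_{\mathcal{X}} + C_\eta\,\|f(t_2)-f(t_1)\|_{\mathcal{Y}} \le 2\eta M + C_\eta K\,(t_2-t_1)^{1-1/r}.
\end{equation*}
Given $\varepsilon>0$, choose $\eta=\varepsilon/(4M)$ and then $\delta>0$ so that $C_\eta K\,\delta^{1-1/r}\le \varepsilon/2$; this makes the right-hand side $\le \varepsilon$ for $|t_2-t_1|\le \delta$, uniformly in $f\in F$, proving equicontinuity. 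Combining pointwise relative compactness with equicontinuity, Ascoli–Arzelà for continuous maps $[0,T]\to B$ (which is a complete metric space) gives that $F$ is relatively compact in $C([0,T];B)$.

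The only genuinely delicate point is the Lions interpolation lemma, whose proof is by contradiction: assuming it fails gives a sequence $(x_n)$ in $\mathcal{X}$ with $\|x_n\|_B=1$, $\|x_n\|_{\mathcal{X}}$ bounded, and $\|x_n\|_{\mathcal{Y}}\to 0$; compactness of $\mathcal{X}\hookrightarrow B$ extracts a subsequence converging in $B$ to some $x$ with $\|x\|_B=1$, while continuity of $B\hookrightarrow \mathcal{Y}$ forces $x=0$, a contradiction. Once this is in hand, the rest of the argument is an essentially mechanical juxtaposition of the $L^\infty$-in-$\mathcal{X}$ bound (providing compactness at each time) and the $W^{1,r}$-in-$\mathcal{Y}$ control (providing uniform time continuity in the weaker space), which the interpolation lemma then transfers to the intermediate space $B$.
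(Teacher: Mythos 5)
The paper does not actually prove this statement --- it is quoted verbatim from Simon \cite[Corollary 4]{simon} as a preliminary tool --- so there is no in-paper argument to compare against; your proposal should be judged as a self-contained proof, and as such it is correct and is the standard one: Ehrling's (Lions') interpolation inequality $\|x\|_B\le\eta\|x\|_{\mathcal{X}}+C_\eta\|x\|_{\mathcal{Y}}$ combined with Ascoli--Arzel\`a, with the $W^{1,r}(0,T;\mathcal{Y})$ bound ($r>1$) supplying uniform H\"older equicontinuity into $\mathcal{Y}$, which the interpolation inequality upgrades to equicontinuity into $B$. Simon's own proof of Corollary 4 goes through his general translation-based compactness criteria in $L^p(0,T;B)$ and $C([0,T];B)$; your route is more elementary and entirely adequate for this special case.

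One point deserves more care than ``after possibly redefining $f$ on a null set.'' The representative of $f$ in $C([0,T];\mathcal{Y})$ is already pinned down by the identity $f(t_2)-f(t_1)=\int_{t_1}^{t_2}\partial_t f$, so you are not free to alter its values; and the $L^\infty(0,T;\mathcal{X})$ bound only gives $\|f(t)\|_{\mathcal{X}}\le M$ for $t$ in a full-measure set $E$ (a priori depending on $f$). What you actually need, and what is true, is that for \emph{every} $t$ the value $f(t)$ lies in the closure in $B$ of the ball $\{\|x\|_{\mathcal{X}}\le M\}$, which is compact in $B$: approximate $t$ by $t_n\in E$, extract from $(f(t_n))$ a subsequence converging in $B$, and identify the limit with $f(t)$ using convergence in $\mathcal{Y}$. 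This gives pointwise relative compactness at every $t$, and the same density argument propagates your equicontinuity estimate, first established for pairs $t_1,t_2\in E$, to all of $[0,T]$ by continuity of the extension. With that patch the argument is complete.
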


		\section{Proof of Theorem \ref{weak_solution}} \label{Sec:weak}

		The proof of Theorem \ref{weak_solution} will be made in the next two subsections. For the existence, we  use the Leray-Schauder fixed point theorem. The uniqueness is get by a comparison argument.

		\subsection{Existence}\label{existence}
		
		Let us  introduce the spaces
		\begin{equation}\label{pf}
			\mathcal{X}_u:= L^{4-}(Q)\quad \mbox{ and }\quad
			\mathcal{X}_v:=  L^{\infty}(Q),
		\end{equation}
		and the operator
		$R:\mathcal{X}_u\times \mathcal{X}_v\rightarrow W_2 \times X_{2+}\hookrightarrow \mathcal{X}_u\times \mathcal{X}_v$ defined by $R(\bar{u},\bar{v})=(u,v)$ is the solution of the decoupled  linear problem
		\begin{equation}\label{pf-1}
			\left\{
			\begin{array}{l}
				\displaystyle
				\int_0^T \langle \partial_tu,\varphi\rangle 
				+ \int_0^T\int_\Omega  \nabla u \cdot \nabla \varphi + \mu \, \bar{u}_+\, u\, \varphi 
				\\ \displaystyle
				\quad =
				\int_0^T\int_\Omega  r \, \bar{u}_+ \varphi 
				- \kappa\int_0^T\int_\Omega \bar {u}_+\nabla v \cdot \nabla \varphi,
				\quad \forall\, \varphi \in L^2(H^1),
				\\
				\\
				\partial_tv-\Delta v+v = \bar{u}_+ + f\, \bar{v}_+ \, 1_{\Omega_c}
				\quad \hbox{in $Q,$}
				\\
				u(0)=u_0,\ v(0)=v_0, \quad \hbox{in $\Omega,$}\\
				\dfrac{\partial v}{\partial\n}=0, \quad \hbox{on $(0,T)\times \partial\Omega,$}
			\end{array}
			\right.
		\end{equation}
		where $\bar{u}_+:=\max\{\bar{u},0\}\ge0$, $\bar{v}_+:=\max\{\bar{v},0\}\ge0$. In fact, first we compute $v$ and after $u$. 
		In the following lemmas we will prove the hypotheses  of Leray-Schauder fixed point theorem.
		\begin{lemma}\label{compact}
			The operator $R:\mathcal{X}_u\times\mathcal{X}_v\rightarrow \mathcal{X}_u\times\mathcal{X}_v$ is well defined and compact.
		\end{lemma}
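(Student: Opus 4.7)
The lemma reduces to three claims: (i) for each $(\bar u,\bar v)\in\mathcal X_u\times\mathcal X_v$ the decoupled linear system \eqref{pf-1} admits a unique solution in $W_2\times X_{2+}$, so $R$ is well defined; (ii) the induced map $R$ is continuous from $\mathcal X_u\times\mathcal X_v$ into itself; (iii) $R$ sends bounded sets to relatively compact sets. For (i) I would first solve the $v$-equation. Since $\bar u\in L^{4-}(Q)\subset L^{2+}(Q)$, $f\in L^{2+}(Q_c)$ and $\bar v_+\in L^\infty(Q)$, the source $\bar u_+ + f\,\bar v_+\,1_{\Omega_c}$ belongs to $L^{2+}(Q)$; together with $v_0\in W^{1+,2+}(\Omega)=W^{2-2/p,p}(\Omega)$ for $p=2+$, Theorem \ref{feireisl} yields a unique $v\in X_{2+}$, and in particular $\nabla v\in L^{2+}(0,T;L^\infty(\Omega))$ via the 2D Sobolev embedding $W^{1,2+}\hookrightarrow L^\infty$. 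Fixing this $v$, the $u$-equation is a \emph{linear} parabolic problem with nonnegative reaction coefficient $\mu\bar u_+\ge 0$, source $r\bar u_+\in L^2(Q)$, and divergence-form drift $-\kappa\,\nabla\cdot(\bar u_+\nabla v)$. The integrability of $\bar u_+\in L^{4-}(Q)$ combined with $\nabla v\in L^{2+}(L^\infty)$ provides enough regularity for a standard Galerkin argument with test $\varphi=u$ to close the $W_2$ estimate; uniqueness then follows from linearity, and the same stability estimates applied to differences of two solutions yield continuity (ii) directly in the $W_2\times X_{2+}$ topology, hence a fortiori in $\mathcal X_u\times\mathcal X_v$.

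For (iii) it suffices to combine the $W_2\times X_{2+}$ bound produced by (i)--(ii) with the compact embedding $W_2\times X_{2+}\hookrightarrow\mathcal X_u\times\mathcal X_v$. On the $u$-component, Aubin--Lions (Theorem \ref{AL}) with $\mathcal X=H^1(\Omega)$, $B=L^q(\Omega)$ for any $q<\infty$ and $\mathcal Y=(H^1)'$ — all admissible since the 2D Rellich embedding $H^1\hookrightarrow L^q$ is compact — produces compactness $W_2\hookrightarrow L^2(0,T;L^q)$; combined with the continuous bound $W_2\hookrightarrow L^4(Q)$ coming from the 2D Gagliardo--Nirenberg inequality $\|u\|_{L^4}^2\le C\|u\|_{L^2}\|u\|_{H^1}$, interpolation upgrades strong convergence to $L^{4-}(Q)=\mathcal X_u$. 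On the $v$-component I would invoke Simon's result (Theorem \ref{Simon-t}) with $\mathcal X=W^{1+,2+}(\Omega)$, $B=C(\overline\Omega)$ and $\mathcal Y=L^{2+}(\Omega)$: the 2D Sobolev embedding $W^{1+,2+}\hookrightarrow C(\overline\Omega)$ is compact because $1+>2/(2+)$, $X_{2+}$ is bounded in $L^\infty(0,T;W^{1+,2+})$, and $\partial_t v\in L^{2+}(Q)$, so Simon yields $X_{2+}\hookrightarrow C([0,T];C(\overline\Omega))\hookrightarrow L^\infty(Q)=\mathcal X_v$ compactly.

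The main obstacle I anticipate is the $L^\infty(Q)$-compactness of the $v$-component: Aubin--Lions alone produces only $L^p$-in-time compactness, so uniform-in-time compactness has to be extracted through Simon's theorem, and this crucially depends on the 2D geometry (to obtain the compact embedding into $C(\overline\Omega)$) and on the regularity indices ``$1+$'' and ``$2+$'' being strictly above the critical Sobolev threshold. Everything else amounts to routine linear parabolic theory combined with Aubin--Lions and Gagliardo--Nirenberg.
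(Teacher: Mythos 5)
Your overall strategy coincides with the paper's: solve the $v$-equation first by the $L^{2+}$ maximal regularity of Theorem \ref{feireisl}, then treat the $u$-equation as a linear parabolic problem, and obtain compactness from the fact that $R$ maps bounded sets of $\mathcal X_u\times\mathcal X_v$ into bounded sets of $W_2\times X_{2+}$, which embeds compactly into $\mathcal X_u\times\mathcal X_v$. Your treatment of the compact embedding is in fact more explicit than the paper's one-line assertion, and the choice of Simon's theorem (rather than Aubin--Lions) to reach $C([0,T];C(\overline\Omega))\subset L^\infty(Q)$ for the $v$-component is exactly the right tool; the exponent checks there are correct.

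There is, however, one step that does not close as written: the claim that $\bar u_+\in L^{4-}(Q)$ together with $\nabla v\in L^{2+}(0,T;L^\infty(\Omega))$ suffices to control the drift term. Testing with $\varphi=u$ you need $\int_0^T\|\bar u_+\nabla v\|_{L^2(\Omega)}^2\,dt<\infty$, and the bound $\|\bar u_+\nabla v\|_{L^2(\Omega)}\le\|\nabla v\|_{L^\infty}\|\bar u_+\|_{L^2}$ puts $\|\nabla v\|_{L^\infty}^2$ in $L^{1+}(0,T)$ and $\|\bar u_+\|_{L^2}^2$ in $L^{2-}(0,T)$ only, so the H\"older exponents sum to roughly $1+\tfrac12>1$ and the time integral is not controlled. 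The repair uses information you already have but did not exploit: interpolating the two components of the $X_{2+}$ norm, $v\in L^\infty(0,T;W^{1+,2+})\cap L^{2+}(0,T;W^{2,2+})$ gives $\nabla v\in L^{4+}(Q)$ (this is what the paper invokes), whence $\|\bar u_+\nabla v\|_{L^2(Q)}\le\|\bar u_+\|_{L^{4-}(Q)}\|\nabla v\|_{L^{4+}(Q)}$ by space--time H\"older with $\tfrac{1}{4-}+\tfrac{1}{4+}=\tfrac12$. With that substitution the energy estimate closes and the rest of your argument goes through unchanged.
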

		\begin{proof}
			Since  $f\in L^{2+}(Q_c)$ and $\bar{v}\in L^{\infty}(Q)$ implies  $f \bar{v}\in L^{2+}(Q)$, hence there exists a unique  $v\in X_{2+} $ solution of the $v$-problem in \eqref{pf-1}.
			By considering the 	linear parabolic $u$-problem in \eqref{pf-1}, 
			%due to we have taken the positive part $u_+$ in the chemotactic velocity. But, testing by $u$, 
			one has $u\in W_2$ owing to $v\in X_{2+}$, hence $\nabla v\in L^{4+}(Q)$ and then 
			$\bar {u}_+\nabla v \in L^2(Q)$.
			%see estimates of possible fixed points in the proof of Lemma \ref{cot-1} below.
			Finally, since $R$ maps bounded sets of $\mathcal{X}_u\times \mathcal{X}_v$ into bounded sets of  $W_2 \times X_{2+}$, then $R$ is compact from $\mathcal{X}_u\times \mathcal{X}_v$ to itself. 
		\end{proof}
		\begin{lemma}\label{cot-1}
			The set
			\begin{equation}\label{cot-2}
				T_\alpha=\{(u,v)\in W_2\times X_{2+}\,:\, (u,v)=\alpha R(u,v)\mbox{ for some }\alpha\in[0,1]\}
			\end{equation}
			is bounded in $\mathcal{X}_u\times\mathcal{X}_v$
			(independently of $\alpha \in [0,1]$).  In fact, $T_\alpha$ is also bounded in $ W_2\times X_{2+}$, because there exists 
			\begin{equation}\label{cot-3}
				M=M(r,\mu,\kappa ,\vert \Omega\vert,T,\|u_0\|_{L^2},\|v_0\|_{W^{1^+,2^+}},\|f\|_{L^{2+}(Q_c)})>0,
			\end{equation}
			with $M$ independent of $\alpha$, such that 
			all pairs of functions $(u,v)\in T_\alpha$ for $\alpha\in[0,1]$ satisfy
			\begin{equation}\label{cot-4}
				\|(u,v) \|_{W_2 \times X_{2+}} \le M.
			\end{equation}
		\end{lemma}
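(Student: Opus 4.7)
My plan is to derive uniform a priori estimates for any $(u,v)\in T_\alpha$, with constants depending only on the data and independent of $\alpha\in[0,1]$. A key preliminary observation is that $(u,v)=\alpha R(u,v)$ means $R(u,v)=(u/\alpha,v/\alpha)$ for $\alpha>0$, so substituting into \eqref{pf-1} shows that $(u,v)$ solves
\begin{equation*}
\partial_t u-\Delta u+\mu u_+ u+\kappa\nabla\cdot(u_+\nabla v)=\alpha r\, u_+,
\end{equation*}
\begin{equation*}
\partial_t v-\Delta v+v=\alpha u_+ +\alpha f v_+ 1_{\Omega_c},
\end{equation*}
with $u(0)=\alpha u_0$, $v(0)=\alpha v_0$ and homogeneous Neumann boundary conditions (the case $\alpha=0$ forces $(u,v)=(0,0)$, trivially bounded).

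The first step is to prove $u,v\ge 0$, so that $u_+=u$ and $v_+=v$. Testing the $v$-equation with $-v_-$ annihilates the term $\alpha f v_+\cdot(-v_-)$ (because $v_+v_-=0$) and produces $\tfrac{1}{2}\tfrac{d}{dt}\|v_-\|^2+\|\nabla v_-\|^2+\|v_-\|^2\le 0$ with $v_-(0)=0$; hence $v\ge 0$. Testing the $u$-equation with $-u_-$ works analogously, because the chemotactic flux $u_+\nabla v$ pairs against $\nabla u_-$, which vanishes on $\{u_+>0\}$. Next, testing the $u$-equation with $1$ and exploiting the logistic dissipation $\mu u^2$ yields $u\in L^\infty(0,T;L^1)\cap L^2(Q)$ with an $\alpha$-independent bound. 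The critical step is then to test the $u$-equation with $u$: rewriting the chemotaxis term as $-\tfrac{\kappa}{2}\int u^2\Delta v$ via integration by parts, and substituting $\Delta v$ from the $v$-equation, produces an energy identity featuring $\mu\int u^3$ on the left and $\tfrac{\kappa\alpha}{2}\int u^3$ on the right. Combined with the $2D$ Gagliardo--Nirenberg inequality $\|u\|_{L^4}^4\le C\|u\|_{L^2}^2\|u\|_{H^1}^2$ and the preceding $L^2(Q)$ bound, a Gronwall argument closes the estimate and yields $u\in L^\infty(0,T;L^2)\cap L^2(0,T;H^1)\hookrightarrow L^4(Q)\subset L^{2+}(Q)$.

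With $u\in L^{2+}(Q)$ in hand and using that $v\in C(W^{1+,2+})\hookrightarrow L^\infty(Q)$ in $2D$ (obtained via a short bootstrap starting from $X_2$-regularity of $v$), the right-hand side of the $v$-equation lies in $L^{2+}(Q)$, so Theorem \ref{feireisl} delivers a uniform bound for $v$ in $X_{2+}$. Consequently $\nabla v\in L^{2+}(0,T;L^\infty)$ (by $W^{2,2+}\hookrightarrow W^{1,\infty}$ in $2D$), the flux $u\nabla v$ is controlled in $L^2(Q)$, and a final energy estimate together with a duality argument yields $u\in W_2$ with $\partial_t u\in L^2(0,T;(H^1)')$. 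This establishes \eqref{cot-4}, and the continuous embedding $W_2\times X_{2+}\hookrightarrow\mathcal{X}_u\times\mathcal{X}_v$ shows that $T_\alpha$ is bounded in $\mathcal{X}_u\times\mathcal{X}_v$.

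I expect the main obstacle to be the attractive case $\kappa>0$, where the $\tfrac{\kappa\alpha}{2}\int u^3$ term coming from the chemotactic substitution competes with the favorable $\mu\int u^3$ dissipation: if $\mu$ is not large enough relative to $\kappa$ to absorb it directly, one must distribute the cubic term between the $\|\nabla u\|^2$ dissipation and the $L^\infty(L^1)$-mass control using Gagliardo--Nirenberg interpolation, as in Lankeit's analysis of the logistic Keller--Segel system, while carefully tracking that every constant remains independent of $\alpha$.
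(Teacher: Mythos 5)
Your overall architecture (non-negativity, mass/logistic estimate giving $u\in L^\infty(L^1)\cap L^2(Q)$, then energy estimates, then $L^{2+}$ parabolic regularity for $v$) matches the paper up to the crucial step, but your treatment of the $L^\infty(L^2)\cap L^2(H^1)$ bound for $u$ is genuinely different and, as written, has a gap. Rewriting the chemotaxis term as $-\tfrac{\kappa}{2}\int u^2\Delta v$ and substituting $\Delta v=\partial_t v+v-\alpha u-\alpha f v 1_{\Omega_c}$ produces not only the cubic term $\tfrac{\kappa\alpha}{2}\int u^3$ but also $-\tfrac{\kappa}{2}\int u^2\,\partial_t v$, which you never address; at the available regularity ($\partial_t v\in L^{2+}(Q)$) controlling it requires $u\in L^{4+}(Q)$-type information, i.e.\ essentially what you are trying to prove, so the argument is circular there. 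Moreover, the competition between $\mu\int u^3$ and $\tfrac{\kappa\alpha}{2}\int u^3$ cannot be resolved for arbitrary $\mu>0$ and $\kappa>0$ by the fallback you sketch: the $2D$ Gagliardo--Nirenberg redistribution $\|u\|_{L^3}^3\le C\|u\|_{L^1}\|\nabla u\|^2+C\|u\|_{L^1}^3$ puts a coefficient $C\kappa K_1$ (with $K_1$ the mass bound) in front of $\|\nabla u\|^2$, which need not be small, so the estimate does not close without a smallness or largeness assumption that the lemma does not make.

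The paper avoids all of this by ordering the estimates differently: from $u\in L^2(Q)$ (which follows already from the mass identity and the logistic dissipation) it first bounds $v$ in $L^\infty(H^1)\cap L^2(H^2)$ by testing the $v$-equation with $v$ and $-\Delta v$; this gives $\nabla v\in L^4(Q)$ by interpolation. Only then is the $u$-equation tested with $u$, and the chemotaxis term is estimated directly, without any substitution, as $\kappa\|u\|_{L^4}\|\nabla v\|_{L^4}\|\nabla u\|\le \tfrac12\|u\|_{H^1}^2+C\|\nabla v\|_{L^4}^4\|u\|_{L^2}^2$, after which Gronwall closes since $\|\nabla v\|_{L^4}^4\in L^1(0,T)$. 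This works for either sign of $\kappa$ and any $\mu>0$, with all constants independent of $\alpha$. You should replace your substitution step by this two-stage estimate (first $v$ in $L^2(H^2)$, then $u$); the remaining parts of your proposal (non-negativity, the Bernoulli-type mass bound, and the final $X_{2+}$ regularity of $v$ via the maximal regularity theorem) are in line with the paper.
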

		
		\begin{proof}
			Let $(u,v)\in T_\alpha$ for $\alpha\in (0,1]$ (the case  $\alpha=0$  is trivial). Then, 
			owing to Lemma \ref{compact}, $(u,v) \in W_2 \times X_{2+}$ and satisfies the following problem:	
			\begin{equation}\label{KSa}
				\left\{
				\begin{array}{l}
					\displaystyle
					\int_0^T \langle \partial_tu,\varphi\rangle 
					+ \int_0^T\int_\Omega  \nabla u \cdot \nabla \varphi + \mu \,{u}_+\, u\, \varphi 
					\\ \displaystyle
					\quad =
					\alpha \int_0^T\int_\Omega  r \,{u}_+ \varphi 
					- \kappa\int_0^T\int_\Omega {u}_+\nabla v \cdot \nabla \varphi,
					\quad \forall\, \varphi \in L^2(H^1),
					\\
					\\
					\partial_tv-\Delta v+v = \alpha\,{u}_+ + \alpha \, f\,{v}_+ \, 1_{\Omega_c}
					\quad \hbox{a.e. in $Q,$}
				\end{array}
				\right.
			\end{equation}
			endowed with the corresponding initial and boundary conditions. Therefore, it suffices to look for a bound of $(u,v)$ in $W_2 \times X_{2+}$ independent of $\alpha$. 
			This bound is carried out into six steps:
			
			\
			
			{{\bf Step 1:}} \ Non-negativity: $u,v\ge 0$.

			By taking in (\ref{KSa})$_1$  $\varphi=u_-:=\min\{u,0\}\le0$ (that is possible because $u\in L^2(H^1)$),
			and considering that $u_-=0$ if $u\ge 0$, $\nabla u_-=\nabla u$ if $u\le 0$, and $\nabla u_-=0$ if $u>0$, we have
			$$
			\frac12\frac{d}{dt}\|u_-\|^2+\|\nabla u_-\|^2= \kappa(u_+\nabla v,\nabla u_-)
			+\alpha \, r \, (u_+,u_-) -  \mu \, ((u_+)^2,u_-)
			=0,
			$$
			thus $u_-\equiv 0$ and, consequently, $u\ge0$.  Similarly, testing  (\ref{KSa})$_2$ by $v_-:=\min\{v,0\}\le0$ 
			we obtain
			$$
			\frac12\frac{d}{dt}\|v_-\|^2+\|\nabla v_-\|^2+\|v_-\|^2=\alpha(u_+,v_-)+\alpha(fv_+,v_-)_{\Omega_c}\le 0,
			$$
			which implies $v_-\equiv0$, then $v\ge0$. Therefore $(u_+,v_+)=(u,v)$. In particular, $(u,v,f)$ is also the solution of problem \eqref{KSa} changing $u_+$ by $u$ and $v_+$ by $v$. Therefore, fixed-point of $R$ are in particular weak solutions of problem (\ref{KS}).
			%\begin{equation}\label{KSa}
			%\left\{\begin{array}{rcll}
				%\partial_t u -\Delta u + \nabla \cdot (u \, \nabla v) &=&
				%\alpha \, r \, u- \alpha \,\mu \, u^2 & \mbox{in $\Omega \times (0,T)$,}\\
				%\noalign{\vspace{-1ex}}\\
				%\partial_t v -\Delta v + v &=& \alpha\, u + \alpha\,  f \, v \, 1_{\Omega_c}& \mbox{in $\Omega \times (0,T)$,}
				%\\
				%\noalign{\vspace{-1ex}}\\
				%\partial_{\bf n} u= \partial_{\bf n} v &=&0  & \mbox{on $\partial\Omega \times (0,T)$,}\\
				%\noalign{\vspace{-1ex}}\\
				%u(x,0)=u_0(x)\ge 0, \quad v(x,0)&=&v_0(x)\ge 0 & \mbox{in $\Omega$.}
				%\end{array}\right.
				%\end{equation}
				
				\
				
				{\bf Step 2:} Boundedness of  $\displaystyle\int_\Omega u(x,t) \, dx $ . 
				
				By taking $\varphi=1$ in  (\ref{KSa})$_1$, we obtain:
				\begin{equation}\label{eq1}
					\displaystyle\frac{d}{dt} 
					\displaystyle\int_{\Omega} u(x,t) \, dx
					+  \mu \, \displaystyle\int_{\Omega} u^2(x,t) \, dx=  \alpha \, r \, \displaystyle\int_{\Omega} u(x,t) \, dx
				\end{equation}
				
				Using Cauchy-Schwartz inequality, we have:
				$$
				\displaystyle\int_{\Omega} u(x,t) \, dx
				\le
				\vert \Omega \vert^{1/2} \, 
				\left(\displaystyle\int_{\Omega} u^2(x,t) \, dx
				\right)^{1/2},
				$$
				which from (\ref{eq1}) let us deduce that:
				\begin{equation}\label{eq2}
					\displaystyle\frac{d}{dt} 
					\displaystyle\int_{\Omega} u(x,t) \, dx
					+ \displaystyle\frac{\mu}{\vert \Omega \vert} \, 
					\left(\displaystyle\int_{\Omega} u(x,t) \, dx\right)^2
					\le
					\alpha \, r \, \displaystyle\int_{\Omega} u(x,t) \, dx.
				\end{equation}
				Using the change of variable $y(t)=\displaystyle\int_{\Omega} u(x,t) \, dx$, (\ref{eq2}) becomes:
				\begin{equation}\label{eq3}
					y'(t) + \displaystyle\frac{ \mu}{\vert \Omega \vert} \,  y(t)^2 \le \alpha \, r \, y(t)\le  r \, y(t) 
				\end{equation}
				which is related to a Bernouilli ODE. 
				Making use of $z(t)=y(t)^{-1}$, we can deduce that:
				$$
				-z'(t) + \displaystyle\frac{\mu}{\vert \Omega \vert}  \le  r \, z(t),
				$$ 
				and thus
				\begin{equation}\label{eq4}
					z'(t)+  r \, z(t) \ge  \displaystyle\frac{\mu}{\vert \Omega \vert} .
				\end{equation}
				This inequality is equivalent to:
				$$
				\displaystyle\frac{d}{dt} \left( e^{ rt} \, z(t) \right) 
				\ge 
				\displaystyle\frac{\mu}{\vert \Omega \vert} \, e^{  rt}
				= 
				\displaystyle\frac{\mu}{\vert \Omega \vert} \, \displaystyle\frac{d}{dt} \left(e^{  rt} \right)
				$$
				and therefore
				\begin{equation}\label{eq5}
					z(t) \ge z(0) \, e^{- rt} + \displaystyle\frac{\mu}{r \vert \Omega \vert} \, \left(
					1-e^{- rt}
					\right)
					= \displaystyle\frac{\mu}{r\vert \Omega \vert} 
					+ 
					\left(z(0) -\displaystyle\frac{\mu}{r\vert \Omega \vert}\right) 
					e^{- rt}.
				\end{equation}
				%	Observe that (\ref{eq5}) can be rewritten as:
				%	$$
				%	z(t) \ge \displaystyle\frac{\mu}{r\vert \Omega \vert} 
				%	+ 
				%	\left(z(0) -\displaystyle\frac{\mu}{r\vert \Omega \vert}\right) 
				%	e^{- rt}.
				%	$$
				Now, we consider two cases:
				\begin{itemize}
					
					\item if $z(0) \ge \displaystyle\frac{\mu}{r\vert \Omega \vert}$ (i.e., $y(0) \le \displaystyle\frac{r\vert \Omega \vert}{\mu}$\Big), then:
					$$
					z(t)
					%=\displaystyle\frac{1}{y(t)}
					\ge \displaystyle\frac{\mu}{r\vert \Omega \vert},
					$$
					which implies that:
					\begin{equation}\label{eq61}
						y(t) \le \displaystyle\frac{r\vert \Omega \vert}{\mu}, \qquad \forall t\ge 0 \qquad
						\mbox{(independently of $\alpha$)}.
					\end{equation}
					
					\item if $z(0) \le \displaystyle\frac{\mu}{r\vert \Omega \vert}$ (i.e., $y(0) \ge \displaystyle\frac{r\vert \Omega \vert}{\mu}$\Big), then from (\ref{eq5}) we can deduce that:
					$$
					z(t)
					%=\displaystyle\frac{1}{y(t)}
					\ge z(0) \, e^{- rt} + \displaystyle\frac{\mu}{r \vert \Omega \vert} \, \left(
					1-e^{- rt}
					\right) \ge z(0) \, e^{- rt} + z(0) \, \left(
					1-e^{- rt}
					\right) = z(0)
					%=\displaystyle\frac{1}{y(0)} ,
					$$
					and therefore
					\begin{equation}\label{eq62}
						y(t) \le y(0)=\displaystyle\int_{\Omega} u_0(x) \, dx
						=m_0, \qquad \forall t\ge 0
						\qquad
						\mbox{(independently of $\alpha$)}.
					\end{equation}
					As a conclusion, from (\ref{eq61}) and (\ref{eq62}), we arrive at the bound
					\begin{equation}\label{eq-conservation}
						y(t)=\displaystyle\int_{\Omega} u(x,t) \, dx
						\le \max 
						\left\{
						m_0 
						,\displaystyle\frac{r\vert \Omega \vert}{\mu}
						\right\}:=K_1, \quad \forall t\ge 0.
					\end{equation}
					
				\end{itemize}
				
				\
				
				{\bf Step 3:} Bound of $ u$ in $L^2(0,T;L^2(\Omega))$
				
				Integrating directly in $(0,T)$ for a fixed $T$ in (\ref{eq1}), and using \eqref{eq-conservation}, we obtain that:
				\begin{equation}\label{u2}
					\displaystyle\int_0^T \int_{\Omega} u^2(x,t) \, dx \, dt 
					\le 
					\displaystyle\frac{m_0+\alpha \, r \, K_1 \, T}{\mu} 
					\le
					\displaystyle\frac{m_0+r \, K_1 \, T}{\mu}:=K_2(T),
				\end{equation}
				which implies that
				\begin{equation}\label{u2-bis}
					\Vert  u \Vert_{L^2(Q)}^2 
					\le 
					\displaystyle
					%\frac{m_0+r \, K_1 \, T}{\mu}:=
					K_2(T).
				\end{equation}
				
				\
				
				{\bf Step 4:} Bound of $v$ in $L^\infty(0,T;H^1(\Omega))\cap L^2(0,T;H^2(\Omega))$
				
				Taking $v$ as test function in (\ref{KSa})$_2$, we obtain:
				\begin{equation}\label{v1}
					\begin{array}{rcl}
						\displaystyle\frac{1}{2} \frac{d}{dt} \Vert v \Vert_{L^2}^2 +
						\Vert v \Vert_{H^1}^2 
						&= & \alpha \, \displaystyle\int_{\Omega} u \,  v \, dx +
						\alpha \, 
						\displaystyle\int_{\Omega_c} f \, v^2 \, dx 
						\\
						\noalign{\vspace{-1ex}}\\
						\mbox{because $(\alpha \in (0,1])$} & \le & 
						\Vert   u \Vert_{L^2} \Vert  v \Vert_{L^2}
						+
						\Vert f \Vert_{L^2} \Vert v \Vert_{L^4}^2 \\
						\noalign{\vspace{-1ex}}\\
						& \le &  \Vert  u \Vert_{L^2} \Vert v \Vert_{L^2}
						+
						\Vert f \Vert_{L^2} \Vert v \Vert_{L^2} \Vert v \Vert_{H^1} \\
						\noalign{\vspace{-1ex}}\\
						& \le &  
						\delta \, 
						\Vert v \Vert_{H^1}^2
						+ 
						C_{\delta} \, \left(
						\Vert u \Vert_{L^2}^2
						+
						\Vert f \Vert_{L^2}^2 \Vert v \Vert_{L^2}^2
						\right)
					\end{array}
				\end{equation}
				where we used the following standard inequality in 2D domains 
				$$\|u\|_{L^{4}} \leq C\|u\|_{L^2}^{1 / 2}\|u\|_{H^{1}}^{1 / 2}, \quad \forall u \in H^{1}(\Omega).$$
				
				Therefore, by taking $\delta$ small enough,
				\begin{equation}\label{v1-1}
					\displaystyle \frac{d}{dt} \Vert v \Vert_{L^2}^2 +
					\Vert v \Vert_{H^1}^2 
					\le   
					C \, \left(
					\Vert u \Vert_{L^2}^2
					+
					\Vert f \Vert_{L^2}^2 \Vert v \Vert_{L^2}^2
					\right)	
				\end{equation}
				From Gronwall's lemma, 
				and thanks to the boundedness of $u$ and $f$ in $L^2(Q)$,
				one has $v$ bounded in 
				$L^\infty(0,T;L^2(\Omega))\cap L^2(0,T;H^1(\Omega))$.
				%	Concretely,
				%	$$
				%	\Vert v \Vert_{L_t^{\infty}(L^2)}^2
				%	\le
				%	\exp \left(
				%	\displaystyle\int_0^t \Vert f (s) \Vert_{L^2}^2 \, ds
				%	\right) 
				%	\left(
				%	\Vert v_0 \Vert_{L^2}^2 + K_2(t)
				%	\right)
				%	$$

				\
				
				By taking $-\Delta v$ as test function in (\ref{KSa})$_2$, we obtain:
				\begin{equation}\label{v2}
					\begin{array}{l}
						\displaystyle\frac{1}{2} \frac{d}{dt} \Vert \nabla v \Vert_{L^2}^2 +
						\Vert \Delta v \Vert_{L^2}^2 + \Vert \nabla v  \Vert_{L^2}^2 
						= -\alpha \, \displaystyle\int_{\Omega}
						u \,  \Delta v
						\, dx -
						\alpha \, 
						\displaystyle\int_{\Omega}
						f \, v \, \Delta v 
						\, dx\\
						\noalign{\vspace{-1ex}}\\
						%\mbox{because $(\alpha \in (0,1])$} 
						\le  \Vert   u \Vert_{L^2} \Vert  \Delta v \Vert_{L^2}
						+
						\Vert f \Vert_{L^{2+}} \Vert v \Vert_{H^1} \Vert \Delta v \Vert_{L^2} \\
						\noalign{\vspace{-1ex}}\\
						\le   
						\delta \, \left(
						\Vert \Delta v \Vert_{L^2}^2
						+
						\Vert v \Vert_{H^1}^2
						\right)
						+
						C_{\delta} \, \left(
						\Vert   u \Vert_{L^2}^2
						+
						\Vert f \Vert_{L^{2+}}^2 \Vert v \Vert_{H^1}^2
						\right)
					\end{array}
				\end{equation}
				
				Adding (\ref{v1}) to (\ref{v2}), we obtain:
				\begin{equation}\label{v3}
					\begin{array}{rcl}
						\displaystyle \frac{d}{dt} \Vert v \Vert_{H^1}^2 +
						\Vert v \Vert_{H^2}^2 
						& \le  & 
						C \, \Big(
						\Vert  u \Vert_{L^2}^2 
						+
						\left(\Vert f \Vert_{L^2}^2 +
						\Vert f \Vert_{L^{2+}}^2
						\right) \,  \Vert v \Vert_{H^1}^2
						\Big)
					\end{array}
				\end{equation}
				
				From Gronwall's lemma, one has $v$ bounded in 
				$L^\infty(0,T;H^1(\Omega))\cap L^2(0,T;H^2(\Omega))$.
				
				\
				
				{\bf Step 5:} $u$ is bounded in $L^\infty(0,T;L^2(\Omega))\cap L^2(0,T;H^1(\Omega))$.
				
				By testing (\ref{KSa})$_1$ by $ u $, after a few computations, we get,      
				\begin{equation*}
					\begin{split}
						\frac{1}{2} \frac{d}{dt} \| u \|_{L^2}^2 + \|\nabla u \|_{L^2}^2 
						+ \mu \, \Vert u \Vert_{L^3}^3
						+ \| u \|_{L^2}^2 
						%\leq (u\nabla v, \nabla u)  + r_+\alpha \|u \|_{L^2}^2 + \| u \|_{L^2}^2 
						\\ 
						\leq \kappa\, \|u\|_{L^{4}}\|\nabla v\|_{L^{4}}\|\nabla u\|_{L^2} + (r_+\alpha +1) \|u \|_{L^2}^2 \\
						\leq  C\|u\|_{L^2}^{2}\|\nabla v\|_{L^{4}}^{4}+\frac{1}{2}\|u\|_{H^{1}}^{2} +(r_+ +1) \|u \|_{L^2}^2, \\
					\end{split}
				\end{equation*} 
				Then, we arrive at
				\begin{equation}\label{st5_1}
					\begin{split}
						\frac{\mathrm{d}}{\mathrm{d} t}\|u\|_{L^2}^{2}+\|u\|_{H^{1}}^{2} 
						\leq 
						C\|\nabla v\|_{L^{4}}^{4}\|u\|_{L^2}^{2}+	
						2 \, (r_+ +1) \|u \|_{L^2}^2 
					\end{split}
				\end{equation}
				Therefore, applying the Gronwall lemma and using Step 4, we obtain that $ u $ is  bounded  in 
				$ L^{\infty}(0, T ; L^{2}(\Omega)) \cap L^{2}(0, T ; H^{1}(\Omega)) $.
				
				\
				
				{\bf Step 6:} $v$ is bounded in $L^\infty(0,T;W^{1+,2+}(\Omega))\cap L^{2+}(0,T;W^{2,2+}(\Omega))$.
				From {\bf Step 4} we can deduce that $v \in L^{\infty-}(Q)$, and from {\bf Step 5}, we can deduce that $u \in L^4(Q)$. Therefore, $u+f \, v \in L^{2+}(Q)$. Then, heat regularity result in Theorem \ref{feireisl}, allows us to deduce that $v \in X_{2^+}$ 
				and the corresponding bound on $X_{2^+}$ depending on $\Vert v_0 \Vert_{W^{1^+,2^+}(\Omega)}$ and the bound of $u+f \, v $ in $L^{2+}(Q)$.

				\
				
				This finishes Lemma  \ref{cot-1}.	
			\end{proof}
			
			%\bigskip
			
			%
			%
			\begin{lemma}\label{conti-operator}
				The operator 
				$R:\mathcal{X}_u\times \mathcal{X}_v\rightarrow \mathcal{X}_u\times \mathcal{X}_v$, defined in
				(\ref{pf-1}), is continuous.
			\end{lemma}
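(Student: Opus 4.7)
The strategy is to establish sequential continuity by a compactness argument. Take any convergent sequence $(\bar u_n,\bar v_n)\to(\bar u,\bar v)$ in $\mathcal{X}_u\times\mathcal{X}_v$, set $(u_n,v_n):=R(\bar u_n,\bar v_n)$ and $(u,v):=R(\bar u,\bar v)$. By Lemma~\ref{compact} and the bound \eqref{cot-4}, $(u_n,v_n)$ is bounded in $W_2\times X_{2+}$, so along a subsequence $(u_n,v_n)\rightharpoonup(u^*,v^*)$ weakly in this space. The plan is to identify $(u^*,v^*)=(u,v)$ by passing to the limit in \eqref{pf-1}. Since the decoupled linear problem \eqref{pf-1} has a unique solution (heat-type for $v$, linear parabolic for $u$ with nonnegative coefficient $\mu\bar u_+$), the whole sequence then converges, which is the claim.

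The $v$-component is the easier part: the $v$-equation in \eqref{pf-1} depends on $(\bar u,\bar v)$ only through the source $\bar u_+ + f\,\bar v_+\,1_{\Omega_c}$. Since the positive-part map is $1$-Lipschitz, $(\bar u_n)_+\to\bar u_+$ in $L^{4-}(Q)\hookrightarrow L^{2+}(Q)$ and $(\bar v_n)_+\to\bar v_+$ in $L^\infty(Q)$, so the source converges in $L^{2+}(Q)$ (using $f\in L^{2+}(Q_c)$). Applying Theorem~\ref{feireisl} with $p=2+$ to the difference $v_n-v$ (which solves a heat-type equation with converging source and zero initial data) yields $v_n\to v$ strongly in $X_{2+}$, hence in $L^\infty(Q)=\mathcal{X}_v$ via the $2D$ embedding $W^{1+,2+}\hookrightarrow L^\infty$. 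This fixes $v^*=v$ and gives convergence of the full sequence in the $v$-variable.

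For $u$, Aubin--Lions (Theorem~\ref{AL}) applied to $W_2$ yields $u_n\to u^*$ in $L^2(Q)$; combined with the $2D$ Gagliardo--Nirenberg inequality $\|u\|_{L^4}^4\le C\|u\|_{L^2}^2\|u\|_{H^1}^2$, which bounds $u_n$ in $L^4(Q)$, interpolation gives $u_n\to u^*$ strongly in $L^{4-}(Q)=\mathcal{X}_u$. Passing to the limit in the weak formulation \eqref{pf-1}$_1$, the linear terms $\partial_t u_n$ and $\nabla u_n$ pass weakly in $W_2$, the reaction terms $r(\bar u_n)_+\varphi$ and $\mu(\bar u_n)_+u_n\varphi$ pass by strong convergence of $(\bar u_n)_+$ in $L^{4-}$ together with strong convergence of $u_n$ in $L^{4-}$, and the chemotactic term requires $(\bar u_n)_+\nabla v_n\to\bar u_+\nabla v$ in $L^2(Q)$. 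This uses that $\nabla v_n\to\nabla v$ in $L^{4+}(Q)$, obtained by interpolation of $C([0,T];L^{2+}(\Omega))\cap L^{2+}(0,T;L^\infty(\Omega))$ (the $L^\infty$-in-space coming from the $2D$ Sobolev embedding $W^{1,2+}\hookrightarrow L^\infty$ applied to the bound $v_n\in L^{2+}(W^{2,2+})$), followed by H\"older with $1/(4-)+1/(4+)\le 1/2$. This identifies $u^*$ as the unique solution of the linear $u$-equation with data $(\bar u,v)$, so $u^*=u$ and convergence extends to the full sequence.

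The main technical obstacle is the chemotactic-type term $\int_0^T\!\!\int_\Omega(\bar u_n)_+\nabla v_n\cdot\nabla\varphi$: neither factor alone enjoys enough compactness to pass to the limit directly, so one must rely on matched strong convergences of $(\bar u_n)_+$ in $L^{4-}$ and of $\nabla v_n$ in $L^{4+}$. The choice of the ambient spaces $\mathcal{X}_u=L^{4-}(Q)$ and $\mathcal{X}_v=L^\infty(Q)$, together with the $2+$ parameter governing the $v$-regularity, is tailored precisely so that this H\"older pairing closes in $L^2$ in $2D$.
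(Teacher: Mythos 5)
Your argument is correct and is essentially the standard compactness--uniqueness proof that the paper delegates to \cite{guillen-mallea-rodriguez}: strong convergence of the data, linearity of the decoupled problem \eqref{pf-1}, maximal parabolic regularity for $v$, Aubin--Lions plus the $L^4(Q)$ bound for $u$, the H\"older pairing $L^{4-}\times L^{4+}\to L^2$ for the chemotactic term, and uniqueness of the linear problem to upgrade subsequential to full convergence. One small correction: the uniform bound on $(u_n,v_n)$ in $W_2\times X_{2+}$ should be invoked from the bounded-sets-to-bounded-sets property stated in Lemma \ref{compact}, not from \eqref{cot-4}, which concerns only the fixed-point set $T_\alpha$ defined in \eqref{cot-2}.
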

			The proof is similar to Lemma 3.4 in \cite{guillen-mallea-rodriguez}.
			
			\
			
			Consequently, from Lemmas  {\ref{compact}}, \ref{cot-1} and \ref{conti-operator}, 
			it follows that the operator $R$  satisfy
			the conditions of the Leray-Schauder fixed-point theorem. Thus, we conclude that the map $R(\bar{u},\bar{v})$ has at least a  fixed point, 
			$R(u,v)=(u,v)$, which is a weak solution to system (\ref{KS}) in $(0,T)$.
			
			Finally, we observe that estimate (\ref{bound_solution})  follows the same steps giving in the proof of Lemma  \ref{cot-1} (now for the case $\alpha=1$).
			
			%{\bf K: Escribir corolario, para existencia y unicidad de solucion en $W_2\times X_2$ usando las estimaciones a priori menos Step 6}

			\subsection{Uniqueness of solution}
			This proof follows the same argument than in \cite{guillen-mallea-rodriguez}, but  it is included here for reader convenience.  
			Let $(u_1,v_1),\, (u_2,v_2)\in W_2\times X_{2}$ two weak solutions of system (\ref{KS}). 
			Substracting  equations (\ref{KS}) for
			$(u_1,v_1)$ and $(u_2,v_2)$, and  denoting
			$u:=u_1-u_2$ and $v:=v_1-v_2$, we obtain the following system
			\begin{equation}\label{uni-1}
				\left\{
				\begin{array}{rcl}
					\partial_tu-\Delta u
					+ \kappa\, \nabla\cdot(u_1\nabla v+u\nabla v_2)&=&
					r \, u -\mu \, u \, (u_1+u_2)
					\ \mbox{ in }Q,\\
					\partial_tv-\Delta v+v&=&u+f\,v\, 1_{\Omega_c}\ \mbox{ in }Q,\\
					u(0,\cdot)&=&0,\ v(0,\cdot)=0\ \mbox{ in }\Omega,\\
					\dfrac{\partial u}{\partial {\bf n}}&=&0,\ \dfrac{\partial v}{\partial{\bf n}}=0\ \mbox{ on }(0,T)\times\partial\Omega.
				\end{array}
				\right.
			\end{equation}
			Testing (\ref{uni-1})$_1$ by $u\in L^2(H^1)$ and (\ref{uni-1})$_2$ by $-\Delta v\in L^2(Q)$  we have
			\begin{equation}\label{uni-2}
				\begin{array}{rcl}
					\displaystyle \frac12 \frac{d}{dt}\left(\|u\|^2+\|\nabla v\|^2\right)
					& + & \|\nabla u\|^2+\|\Delta v\|^2+\|\nabla v\|^2
					+ \mu \, \displaystyle\int_{\Omega} u^2 \, (u_1+u_2) \, dx
					\\
					\noalign{\vspace{-1ex}}\\
					& = & r \, \Vert u \Vert^2 
					+ \kappa\, (u_1\nabla v+ u\nabla v_2,\nabla  u) +(u+fv,-\Delta v).
				\end{array}
			\end{equation}
			The term $ \mu \, \displaystyle\int_{\Omega} u^2 \, (u_1+u_2) \, dx$ has the good sign.
			
			\
			
			Applying the H\"{o}lder and Young inequalities, we obtain 
			\begin{eqnarray}
				(u_1\nabla v,\nabla u)&\le&\|u_1\|_{L^4}\|\nabla v\|_{L^4}\|\nabla u\|\le C\|u_1\|_{L^4}\|\nabla v\|^{1/2}\|\nabla v\|^{1/2}_{H^1}\|\nabla u\|\nonumber\\
				&\le&\delta(\|\nabla v\|^2_{H^1}+\|\nabla u\|^2)+C_\delta
				\|u_1\|^4_{L^4}\|\nabla v\|^2,
				\label{uni-3}\\
				(u\nabla v_2,\nabla u)&\le&\|u\|_{L^4}\|\nabla v_2\|_{L^4}\|\nabla u\|\le C\|u\|^{1/2}\|u\|_{H^1}^{1/2}\|\nabla v_2\|_{L^4}\|\nabla u\|\nonumber\\
				&\le&\delta\|u\|^2_{H^1}+C_\delta\|\nabla v_2\|^4_{L^4}\|u\|^2,\label{uni-4}\\
				(u,-\Delta v)&\le&\delta\|\Delta v\|^2+C_\delta\|u\|^2,\label{uni-5}\\
				(fv,-\Delta v)&\le& \Vert f \Vert_{L^{2+}}
				\Vert v \Vert_{H^{1}}
				\Vert \Delta v \Vert_{L^2} \le
				\delta\|v\|^2_{H^2}+C_\delta\|f\|^{2}_{L^{2+}}\|v\|^2_{H^1}
				.\label{uni-6}
			\end{eqnarray}

			\bigskip

			Adding (\ref{v1}) to (\ref{uni-2}), and using (\ref{uni-3})-(\ref{uni-6}), we obtain:
			\begin{equation}\label{v7}
				\begin{array}{l}
					\displaystyle  \frac{d}{dt}\left(\|u\|^2+\|\nabla v\|^2\right)
					+ \Vert \nabla u \Vert^2 +
					\| \nabla v\|_{H^1}^2
					\\
					%\noalign{\vspace{-1ex}}\\
					\quad \le 
					C \, \Big(
					\Vert u \Vert^2
					+
					\|u_1\|^4_{L^4}\|\nabla v\|^2 
					+ 
					\|\nabla v_2\|^4_{L^4}\|u\|^2
					+
					\|f\|^{2}_{L^{2+}}\|v\|^2_{H^1}
					\Big)
				\end{array}
			\end{equation}
			In order to consider the completed norm for $v$, we take $v$ as test function in (\ref{uni-1})$_2$, we obtain:
			$$
			\displaystyle\frac{1}{2} \, \frac{d}{dt}
			\left(
			\|v\|^2
			\right) 
			+  \| v \|_{H^1}^2
			= \displaystyle\int_{\Omega} u \, v \, dx
			+
			\int_{\Omega_c} f \, v^2 \, dx
			$$
			which implies:
			\begin{equation}\label{v10}
				\displaystyle \, \frac{d}{dt}
				\|v\|_{L^2}^2 + \| v \|_{H^1}^2
				\le
				C 
				\left(
				\Vert u \Vert_{L^2}^2 + 
				\|f\|^{2}_{L^{2}}\|v\|^2_{L^2}
				\right)
			\end{equation}

			Adding (\ref{v7}) to (\ref{v10}), we obtain:
			\begin{equation}\label{v11}
				\begin{array}{l}
					\displaystyle\frac{d}{dt}\left(
					\|u\|^2+\|v\|_{H^1}^2
					\right)
					+ \Vert u \Vert_{H^1}^2 +
					\| v\|_{H^2}^2
					\\
					%\noalign{\vspace{-1ex}}\\
					\quad \le 
					C \, \Big(\Vert u \Vert^2
					+\|u_1\|^4_{L^4}\|\nabla v\|^2 
					+ \|\nabla v_2\|^4_{L^4}\|u\|^2
					+\|f\|^{2}_{L^{2+}}\|v\|^2_{H^1}
					\Big)
				\end{array}
			\end{equation}
			Since $\|u_1\|^4_{L^4}+ \|\nabla v_2\|^4_{L^4}
			+  \|f\|^{2}_{L^{2+}} \in L^1(0,T)$ and $u_0=v_0=0$, then Gronwall lemma implies uniqueness.

			%Using $y(t)=\|u(t)\|^2+\displaystyle\frac12\|v(t)\|_{H^1}^2$, (\ref{v11}) can be written as:
			%$$
			%y'(t)+y(t) \le C \, \left( 1+ a(t) \right) \, y(t), 
			%$$
			%with $a(t)= \|u_1\|^4_{L^4}+ \|\nabla v_2\|^4_{L^4}
			%+ {\color{red} \|f\|^{2}_{L^{2+\epsilon}}}
			%%+{\color{blue} \cancel{ \|f\|^2_{L^2} +}}  \, \, \|f\|^4_{L^4}  
			% \in L^1(0,T).$
			%Thus, defining $A(t)=\displaystyle\int_0^t a(s) \, ds$,
			%$$
			%\displaystyle\frac{d}{dt} \left[
			%e^{(1-C)t-A(t)} \, y(t)
			%\right] \le 0,
			%$$
			%which implies that
			%\begin{equation}\label{uni-7}
			%y(t) \le y(0) \, e^{-(1-C)t+A(t)}.
			%\end{equation}
			%
			%\bigskip
			%
			%
			%
			%\bigskip
			%
			%\bigskip
			
			%Therefore, from (\ref{uni-7}) and Gronwall lemma,  since $u_0=v_0=0$ and $(u_1,\nabla v_2)\in L^4(Q)\times L^4(Q)$, we obtain  $u=v=0$, and the uniqueness follows.
			
			\bigskip
			
			Thus, the proof of Theorem \ref{weak_solution} is finished.

				\section{Proof of Theorem \ref{existence_solution}}  \label{Sec:existence}

				The  admissible set for the optimal control problem (\ref{func}) is defined by
				\begin{equation*}\label{adm}
					\mathcal{S}_{ad}=\{s=(u,v,f)\in W_2\times X_{2+}\times\mathcal{F}\,:\, s \mbox{ is a weak solution of  (\ref{KS}) in }(0,T)\}.
				\end{equation*}
				%
				%\begin{definition}\label{optimal_solution}
				%	An element $(\tilde{u},\tilde{v},\tilde{f})\in\mathcal{S}_{ad}$ will be called a global optimal solution of problem (\ref{func}) if
				%	\begin{equation}\label{optimal}
					%	J(\tilde{u},\tilde{v},\tilde{f})=\min_{(u,v,f)\in\mathcal{S}_{ad}}J(u,v,f).
					%	\end{equation}
				%	
				%\end{definition}
				%\begin{theorem}\label{existence_solution}
				%	Let 
				%	%$u_0\in H^1(\Omega)$ and 
				%	$(u_0,  v_0)\in L^2\times  W^{1+,2+}$ with $u_0\ge0$ and $v_0\ge 0$ in $\Omega$. Assuming that either $\gamma_f>0$ or $\mathcal{F}$ is bounded in
				%	$L^{2+}(Q_c)$, then the bilinear optimal control problem (\ref{func}) has at least one global optimal solution $(\tilde{u},\tilde{v},\tilde{f})\in\mathcal{S}_{ad}$.
				%\end{theorem}
				%\begin{proof}
				From  Theorem \ref{weak_solution} one has  $\mathcal{S}_{ad}\neq\emptyset$. Let  $\{s_m\}_{m\in\mathbb{N}}:=\{(u_m,v_m,f_m)\}_{m\in\mathbb{N}}\subset\mathcal{S}_{ad}$ be a minimizing sequence
				of $J$, that is, $\displaystyle\lim_{m\rightarrow+\infty}J(s_m)=\inf_{s\in\mathcal{S}_{ad}}J(s)$. Then, by definition of $\mathcal{S}_{ad}$, for each $m\in\mathbb{N}$, $s_m$ satisfies  system
				$(\ref{eq1})_1$ variationally in $L^2((H^1)')$ and $(\ref{eq1})_2$ a.e. $(t,x)\in Q$.
				
				From the definition of $J$ and the assumption $\gamma_f>0$  or  $\mathcal{F}$ is bounded in $L^{2+}(Q_c)$, it follows that 
				\begin{equation}\label{bound_F}
					\{f_m\}_{m\in\mathbb{N}}\mbox{ is bounded in }L^{{2+}}(Q_c)
				\end{equation}
				
				From (\ref{cot-3})-(\ref{cot-4}) there exists a positive  constant $C$, independent of $m$, such that
				\begin{equation}\label{bound_u_v}
					\|(u_m,v_m)\|_{W_2\times X_{2+}}\le C.
				\end{equation}
				Therefore, from (\ref{bound_F}), (\ref{bound_u_v}), and taking into account that $\mathcal{F}$ is a closed convex subset of $L^{2+}(Q_c)$
				(hence is weakly closed in $L^{2+}(Q_c)$),  it is deduced that there exists 
				$\tilde{s}=(\tilde{u},\tilde{v},\tilde{f})\in W_2\times X_{2+}\times\mathcal{F}$
				such that, for some subsequence of $\{s_m\}_{m\in\mathbb{N}}$, still denoted by  $\{s_m\}_{m\in\mathbb{N}}$, 
				the following convergences hold, as $m\rightarrow+\infty$:
				\begin{eqnarray}
					%u_m&\rightarrow&\tilde{u}\quad \mbox{weakly in }L^{2}(H^2)\mbox{ and weakly* in }L^\infty(H^1),\label{c2}\\
					u_m&\rightarrow&\tilde{u}\quad \mbox{weakly in }L^{2}(H^{1})\mbox{ and weakly* in }L^\infty(L^{2}),\label{c2}\\
					v_m&\rightarrow&\tilde{v} \quad\mbox{weakly in }L^{2+}(W^{2,2+})\mbox{ and weakly* in }L^\infty(W^{1+,2+}),\label{c3}\\
					\partial_tu_m&\rightarrow&\partial_t\tilde{u} \quad\mbox{weakly in } L^2((H^1)'),\label{c4}\\
					\partial_tv_m&\rightarrow&\partial_t\tilde{v} \quad\mbox{weakly in } L^{2+}(Q),\label{c5}\\
					f_m&\rightarrow&\tilde{f} \quad \mbox{weakly in } L^{2+}(Q_c),\mbox{ and }\tilde{f}\in \mathcal{F}.\label{c6}
				\end{eqnarray}
				From (\ref{c2})-(\ref{c5}), 
				and using Sobolev embedding and Aubin-Lions compactness results,  one has
				\begin{eqnarray}
					(u_m,v_m) &\rightarrow& (\tilde{u},\tilde{v}) \quad\mbox{strongly in } C^0([0,T];((H^1(\Omega))'\times L^2(\Omega)) \label{c8-a}\\
					v_m &\rightarrow&\tilde{v} \quad\mbox{strongly in } L^{\infty}(Q)). \label{c8-b}\\
					(u_m, \nabla v_m)&\rightarrow&(\tilde{u},\nabla\tilde{v})\quad\mbox{strongly in } L^{4-}(Q)) \times L^{4+}(Q).\label{c8-c}
				\end{eqnarray}
				In particular, using (\ref{c6}), (\ref{c8-b}) and (\ref{c8-c}) the limit of the nonlinear terms of (\ref{eq1}) can be controlled as follows 
				\begin{eqnarray}
					u_m\cdot\nabla v_m &\rightarrow& \tilde{u}\cdot\nabla\tilde{v} \quad \hbox{ strongly in $L^{2}(Q)$},\label{c8-0}\\
					f_m v_m 1_{\Omega_c}&\rightarrow& \tilde{f}\,\tilde{v}\, 1_{\Omega_c} \quad \hbox{ weakly in $ L^{2+}(Q)$}.\label{c8-1}
				\end{eqnarray}
				Moreover, from (\ref{c8-a}), $(u_m(0),v_m(0))$ converges to 
				$(\tilde{u}(0),\tilde{v}(0))$ in $H^1(\Omega)'\times L^2(\Omega)$, 
				and since $u_m(0)=u_0$, $v_m(0)=v_0$,  it is deduced that $\tilde{u}(0)=u_0$ 
				and $\tilde{v}(0)=v_0$.
				Thus, $\tilde{s}$ satisfies the initial conditions given in (\ref{KS}). 
				Therefore, considering the convergences (\ref{c2})-(\ref{c8-1}), 
				and taking the limit in equation (\ref{KSa}) replacing $(u,v,f)$ by $(u_m,v_m,f_m)$,
				as $m$ goes to $+\infty$,
				it is possible to conclude that $\tilde{s}=(\tilde{u},\tilde{v},\tilde{f})$ is  a weak solution of the system (\ref{KS}), that is, $\tilde{s}\in\mathcal{S}_{ad}$. Therefore,
				\begin{equation}\label{op20}
					\lim_{m\rightarrow+\infty}J(s_m)=\inf_{s\in\mathcal{S}_{ad}}J(s)\le J(\tilde{s}).
				\end{equation}
				Additionally, since $J$ is lower semicontinuous on $\mathcal{S}_{ad}$,  one has $J(\tilde{s})\le \displaystyle\liminf_{m\rightarrow+\infty} J(s_m)$, which jointly to  
				(\ref{op20}), implies that $\tilde{s}$ is a global optimal control.
				%\end{proof}
				
				\section{Proof of Theorem \ref{Th:Optimality-System}}   \label{Sec:LM}
				
				\subsection{A generic Lagrange multipliers theorem}
				
				\
				
				We introduce a Lagrange multipliers theorem given by J.Zowe and S.Kurcyusz \cite{zowe}  (see also \cite[Chapter 6]{troltz}, for more details) that we will apply to get first-order necessary optimality conditions for
				a local optimal solution $(\tilde{u},\tilde{v},\tilde{f})$ of problem (\ref{func}).  
				%The argument is based on a generic result  on the existence of Lagrange multipliers in Banach spaces.
				First, we consider the following (generic) optimization problem:
				\begin{equation}\label{abs1}
					\min_{s\in \mathbb{M}} J(s)\ \mbox{ subject to }G(s)=0,
				\end{equation}
				where $J:\mathbb{X}\rightarrow\mathbb{R}$ is a functional, $G:\mathbb{X}\rightarrow \mathbb{Y}$ is an operator,
				$\mathbb{X}$ and $\mathbb{Y}$ are Banach spaces, and  $\mathbb{M}$ is a nonempty closed and convex subset of $\mathbb{X}$. 
				The corresponding admissible set  for problem (\ref{abs1}) is 
				$$
				\mathcal{S}=\{s\in \mathbb{M}\,:\, G(s)=0\}.
				$$
				%For a subset $A$ of $\mathbb{X}$ (or $\mathbb{Y}$), $A^+$ denotes its polar cone, that is
				%$$
				%A^+=\{\rho\in \mathbb{X}'\,:\, \langle \rho, a\rangle_{\mathbb{X}'}\ge 0,\ \forall a\in A\}.
				%$$
				
				\begin{definition}(Lagrangian)\label{lagrangian}
					The functional $\mathcal{L}:\mathbb{X}\times{\mathbb{Y}}'\rightarrow\mathbb{R}$, given by
					\begin{equation}\label{lagr-1}
						\mathcal{L}(s,\xi)=J(s)-\langle\xi,G(s)\rangle_{\mathbb{Y}'}
					\end{equation}
					is called the Lagrangian functional related to problem (\ref{abs1}).
				\end{definition}
				
				\begin{definition}(Lagrange multiplier)\label{abs2}
					Let $\tilde{s}\in\mathcal{S}$ be a local  optimal solution for problem (\ref{abs1}). Suppose that $J$ and $G$ are Fr\'echet differentiable in $\tilde{s}$. 
					%with derivatives 	$J'(\tilde{s})$ and $G'(\tilde{s})$, respectively. 
					Then, any $\xi\in \mathbb{Y}'$ is called a Lagrange multiplier for (\ref{abs1}) at the point $\tilde{s}$ if
					\begin{equation}\label{abs3}
						%	\left\{
						%	\begin{array}{lcl}
							%\cancel{\color{red}{\xi\in \mathcal{N}^+,}}\\
							%\langle\xi, G(\tilde{s})\rangle_{\mathbb{Y}'}&=&0,\\
							\mathcal{L}_s'(\tilde{s},\xi)[r]=J'(\tilde{s})[r]-\langle\xi, G'(\tilde{s})[r]\rangle_{\mathbb{Y}'}\ge0\quad \forall r\in \mathcal{C}(\tilde{s}),
							%	\end{array}
						%	\right.
					\end{equation}
					
					%{\color{red} Puse par\'entesis para que se note que la diferencia pertenece al cono polar de la  c\'apsula c\'onica}
					
					where $\mathcal{C}(\tilde{s})=\{\theta(s-\tilde{s})\,:\, s\in \mathbb{M},\, \theta\ge0\}$ is the conical hull of $\tilde{s}$ in $\mathbb{M}$.
					%{\color{red} and $\circ$ denotes the composition of functions.}
				\end{definition}
				\begin{definition}\label{abs4}
					Let $\tilde{s}\in\mathcal{S}$ be a local optimal solution for problem (\ref{abs1}). 
					It will be said that $\tilde{s}$ is a regular point if
					\begin{equation*}\label{abs5}
						G'(\tilde{s})[\mathcal{C}(\tilde{s})]
						%{\color{red}\cancel{-\mathcal{N}(G(\tilde{s})}}
						=\mathbb{Y}.
					\end{equation*}
					%{\color{red}\st{where $\mathcal{N}(G(\tilde{{\color{red}s}}))=\{(\theta(n-G(\tilde{{\color{red}s}}))\,:\, n\in \mathcal{N},\, \theta\ge0\}$ is the conical hull of $G(\tilde{{\color{red}s}})$ in $\mathcal{N}$.}
					\end{definition}
					\begin{theorem}(\cite[Theorem 6.3, p.330]{troltz}, \cite[Theorem 3.1]{zowe})\label{abs6}
						Let $\tilde{s}\in\mathcal{S}$ be a local  optimal solution for problem (\ref{abs1}).  Suppose that $J$ is Fr\'echet differentiable in $\tilde{s}$, and
						$G$ is continuous Fr\'echet-differentiable in $\tilde{s}$ . If $\tilde{s}$ is a regular point, then there exists  Lagrange multipliers for (\ref{abs1}) at $\tilde{s}$. 
					\end{theorem}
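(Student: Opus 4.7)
The plan is to combine two classical ingredients: a Lyusternik–Graves type result that uses the regularity hypothesis to convert linearized feasibility inside the cone $\mathcal{C}(\tilde{s})$ into genuine feasibility of points of $\mathbb{M}$ satisfying $G=0$, and a Hahn–Banach separation in the product space $\mathbb{R}\times\mathbb{Y}$ that produces the multiplier $\xi$. Throughout I will write $L:=G'(\tilde{s})\in\mathcal{L}(\mathbb{X},\mathbb{Y})$ and $C:=\mathcal{C}(\tilde{s})$, so that the regularity assumption reads $L(C)=\mathbb{Y}$.

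The first step is a \emph{linearized optimality} inequality: for every $r\in C$ with $Lr=0$, necessarily $J'(\tilde{s})[r]\ge 0$. Since $L$ is continuous linear and $L(C)=\mathbb{Y}$, the Robinson–Ursescu open mapping theorem applied to the closed convex process $L|_{\overline{C}}$ yields a constant $\kappa>0$ such that every $y\in\mathbb{Y}$ admits a preimage $r_y\in C$ with $\|r_y\|\le\kappa\|y\|$. Combining this with the continuous Fréchet differentiability of $G$ at $\tilde{s}$ and a standard Newton/contraction iteration, one can show that for $r\in C$ with $Lr=0$ and for all small $t>0$ there exists a correction $\rho(t)\in\mathbb{X}$ with $\|\rho(t)\|=o(t)$ such that $\tilde{s}+tr+\rho(t)\in\mathbb{M}$ (here the convexity of $\mathbb{M}$ is used to keep the iterates inside $\mathbb{M}$, since moves along $C$ are expressible as convex combinations $\tilde{s}+t(s-\tilde{s})$ with $s\in\mathbb{M}$ and $t\in[0,1]$) and $G(\tilde{s}+tr+\rho(t))=0$. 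Local optimality of $\tilde{s}$ then gives $J(\tilde{s}+tr+\rho(t))\ge J(\tilde{s})$; dividing by $t$ and using Fréchet differentiability of $J$ yields $J'(\tilde{s})[r]\ge 0$, as claimed.

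The second step extracts the multiplier by Hahn–Banach separation. Consider in $\mathbb{R}\times\mathbb{Y}$ the convex cones
\[
A:=\{(J'(\tilde{s})[r],\,Lr):r\in C\},\qquad B:=\{(\tau,0):\tau<0\}.
\]
The linearized optimality statement of Step 1 is exactly $A\cap B=\emptyset$. Since $B$ is open, convex and nonempty, the geometric Hahn–Banach theorem furnishes $(\alpha,\xi)\in\mathbb{R}\times\mathbb{Y}'$, not both zero, with $\alpha\,J'(\tilde{s})[r]+\langle\xi,Lr\rangle\ge 0$ for every $r\in C$ and $\alpha\tau\le 0$ for every $\tau<0$. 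The latter forces $\alpha\ge 0$, and I use the regularity hypothesis once more to rule out $\alpha=0$: if $\alpha=0$, then $\langle\xi,Lr\rangle\ge 0$ for all $r\in C$, hence $\langle\xi,y\rangle\ge 0$ for all $y\in L(C)=\mathbb{Y}$, which forces $\xi=0$ and contradicts non-triviality. Thus $\alpha>0$, and the element $-\xi/\alpha\in\mathbb{Y}'$ satisfies $J'(\tilde{s})[r]-\langle -\xi/\alpha, Lr\rangle\ge 0$ for all $r\in C$; relabeling, this is the Lagrange multiplier required by \eqref{abs3}.

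The principal obstacle is the cone-valued Lyusternik step, because the ordinary Banach-space implicit function theorem does not apply directly: admissible perturbations are constrained to the convex set $\mathbb{M}$ rather than the whole space $\mathbb{X}$, and the cone $C=\mathcal{C}(\tilde{s})$ need not be closed. The appropriate substitute is a metric regularity statement for the closed convex process generated by $L$ restricted to $\overline{C}$ (Robinson–Ursescu), which, together with the continuous differentiability of $G$, allows a contraction argument that stays inside $\mathbb{M}$ thanks to convexity. Once this quantitative surjectivity is in hand, the separation argument and the regularity-based exclusion of the singular case $\alpha=0$ are straightforward, and deliver the uniqueness-free existence statement of Theorem~\ref{abs6}.
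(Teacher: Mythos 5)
First, a remark on scope: the paper gives no proof of Theorem \ref{abs6} at all --- it is quoted as a known result of Zowe--Kurcyusz and Tr\"oltzsch and used as a black box --- so there is no internal argument to compare yours against. Measured against the standard proof in those references, your skeleton (a generalized open mapping theorem for the convex process $L|_{C}$, a Lyusternik-type correction step to get the linearized necessary condition, then a duality argument to extract $\xi$) is the right one. Step 1 is sound in outline: the quantitative surjectivity plus a corrector iteration kept inside $\mathbb{M}$ by convexity is exactly how one proves $J'(\tilde s)[r]\ge 0$ for $r\in C\cap\ker L$. (Two standard cautions: $C=\mathcal{C}(\tilde s)$ need not be closed, so one needs the Zowe--Kurcyusz open mapping statement for conical hulls of closed convex sets rather than literally Robinson--Ursescu for $\overline{C}$; and one must track that the total convex weight $t\theta+\theta_2+\dots$ of the iterates stays $\le 1$ so that the limit point is genuinely in $\mathbb{M}$.)

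The genuine gap is in Step 2. The set $B=\{(\tau,0):\tau<0\}$ has empty interior in $\mathbb{R}\times\mathbb{Y}$ whenever $\mathbb{Y}\neq\{0\}$, so it is \emph{not} open and the geometric Hahn--Banach theorem cannot be invoked in the form you use; two disjoint convex sets, neither of which has nonempty interior, need not be separable in infinite dimensions. The repair uses the regularity hypothesis a third time: the open mapping estimate gives $\delta>0$, $\kappa>0$ such that every $y$ with $\|y\|\le\delta$ has a preimage $r_y\in C$ with $\|r_y\|\le\kappa$, whence the convex set $A+(\mathbb{R}_{\ge 0}\times\{0\})$ contains the open set $\{\tau>\kappa\|J'(\tilde s)\|\}\times\{y:\|y\|<\delta\}$ and therefore has nonempty interior; one then separates this set from the single point $(\tau_0,0)$ with $\tau_0<0$, which by Step 1 it does not contain. (Equivalently, Zowe--Kurcyusz avoid separation altogether: they prove the Farkas-type statement that $C^{+}+L^{*}\mathbb{Y}'$ is weak-$*$ closed, using the same quantitative surjectivity to bound the candidate multipliers and extract a weak-$*$ convergent subnet.) Your exclusion of the degenerate case $\alpha=0$ via $L(C)=\mathbb{Y}$ is correct. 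So the architecture is right, but the separation step as written fails and must be replaced by one of these two standard arguments.
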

					
					\subsection{Application of the Lagrange multiplier theory}
					
					Now,  in order to reformulate the optimal control problem (\ref{func}) in the abstract setting  (\ref{abs1}), we introduce the Banach spaces
					\begin{equation*}\label{spaces_X_Y}
						\mathbb{X}:=\widehat{W}_2\times\widehat{X}_{2+}\times L^{2+}(Q_c),\ 
						\mathbb{Y}:=L^2((H^1)')\times L^{2+}(Q),
					\end{equation*}
					where 
					$$
					\widehat{W}_2 =\{u\in W_2\,:\, u(0)=0\},
					\quad \widehat{X}_{2+}=\{v\in X_{2+}\,:\, v(0)=0, \ \partial_n v\vert_{\partial\Omega}=0\}
					$$
					and the  operator $G=(G_1,G_2):\mathbb{X}\rightarrow\mathbb{Y}$, where 
					\begin{equation*}
						G_1:\mathbb{X}\rightarrow L^2((H^1)'),\quad G_2:\mathbb{X}\rightarrow L^{2+}(Q)
					\end{equation*}
					are defined at each point $s=(u,v,f)\in\mathbb{X}$ by
					\begin{equation*}\label{restriction}
						\left\{
						\begin{array}{l}
							\langle G_1(s),\varphi\rangle=\langle\partial_t u, \varphi\rangle_{L^2(H^1),L^2((H^1)')}
							+(\nabla u-\kappa  \, u\nabla v, \nabla \varphi )_{L^2}
							\\
							\noalign{\vspace{-2ex}}\\
							\qquad \qquad + (-r\, u + \mu\, u^2,\varphi )_{L^2}
							\quad \forall\, \varphi\in L^2(H^1)
							\vspace{0.1cm}\\
							G_2(s)=\partial_tv-\Delta v+v-u-f\,v\,1_{\Omega_c}
							\quad \hbox{in $L^{2+}(Q).$}
						\end{array}
						\right.
					\end{equation*}
					Thus, the optimal control problem (\ref{func}) is reformulated  as follows
					\begin{equation}\label{problem-1}
						\min_{s\in{\mathbb{M}}}J(s)\quad \mbox{ subject to }\quad {G}(s)={\bf 0},
					\end{equation}
					where
					\begin{equation}\label{Mset}
						\mathbb{M}:=(\hat{u},\hat{v},0)
						+\widehat{W}_2 \times \widehat{X}_{2+}\times \mathcal{F},
					\end{equation}
					with $(\hat{u},\hat{v})$ the global weak solution of (\ref{KS}) without control $(\hat{f}=0$) and $\mathcal{F}$ is defined in (\ref{F-def}).
					\begin{remark}\label{lagrag-problem}
						From Definition \ref{lagrangian},   the Lagragian associated to optimal control problem (\ref{problem-1}) is the
						functional $\mathcal{L}:\mathbb{X}\times L^{2}(H^1)\times L^{2-}(Q)\rightarrow\mathbb{R}$ given by
						$$
						\mathcal{L}(s,\lambda,\eta)=J(s)-\langle\lambda,G_1(s)\rangle_{L^2(H^1),L^2((H^1)')}-(\eta,G_2(s))_{L^{2-},L^{2+}}.
						$$
					\end{remark}
					
					The set ${\mathbb{M}}$ defined in (\ref{Mset}) is a closed convex subset of $\mathbb{X}$ 
					%{\color{red}\st{$\mathcal{N}=\{{\bf 0}\}$}}  
					and the admissible  set of control problem (\ref{problem-1}) is
					\begin{equation}\label{admi-1}
						\mathcal{S}_{ad}=\{s=(u,v,f)\in{\mathbb{M}}\,:\, {G}(s)={\bf 0}\}.
					\end{equation}
					Concerning to the differentiability of the functional $J$ and the constraint operator ${G}$,  one has the following results.
					
					\begin{lemma}
						The functional  $J:\mathbb{X}\rightarrow\mathbb{R}$ is Fr\'echet differentiable and the  Fr\'echet
						derivative of $J$ in $\tilde{s}=(\tilde{u},\tilde{v},\tilde{f})\in\mathbb{X}$ in the direction
						$r=(U,V,F)\in\mathbb{X}$ is
						\begin{equation}\label{C7}
							\begin{split}
								J'(\tilde{s})[r]=\gamma_u\int_0^T\int_{\Omega} (\tilde u - u_d) U+\gamma_v\int_0^T\int_{\Omega}(\tilde{v}-v_d)V
								\\
								+\gamma_f\int_0^T\int_{\Omega_c} {\rm sgn}(\tilde{f})|\tilde{f}|^{1+}  F.
							\end{split}
						\end{equation}
					\end{lemma}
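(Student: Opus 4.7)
The plan is to split $J=J_1+J_2+J_3$ corresponding to the three summands in its definition and to differentiate each piece separately. Since the ambient spaces $\widehat{W}_2$, $\widehat{X}_{2+}$, $L^{2+}(Q_c)$ embed continuously into $L^2(Q)$, $L^2(Q)$, $L^{2+}(Q_c)$ respectively, it is enough to verify Fréchet differentiability of each $J_i$ with respect to the single scalar variable that appears in it.

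For $J_1(s)=\frac{\gamma_u}{2}\int_0^T\|u-u_d\|^2_{L^2(\Omega)}\,dt$ and $J_2(s)=\frac{\gamma_v}{2}\int_0^T\|v-v_d\|^2_{L^2(\Omega)}\,dt$, the standard Hilbert-space expansion
\[
J_1(\tilde s+r)-J_1(\tilde s)=\gamma_u\int_0^T\!\!\int_\Omega(\tilde u-u_d)U\,dx\,dt+\frac{\gamma_u}{2}\|U\|^2_{L^2(Q)},
\]
and analogously for $J_2$, exhibits a bounded linear part in $U$ (resp.\ $V$) and a quadratic remainder that is $O(\|U\|_{L^2(Q)}^2)=o(\|r\|_{\mathbb X})$. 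This gives the first two terms of \eqref{C7}.

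The third piece $J_3(f)=\frac{\gamma_f}{2+\varepsilon}\int_{Q_c}|f|^{2+\varepsilon}\,dx\,dt$ is the delicate one, because it is not a squared Hilbert norm. I would rely on the scalar fact that $\phi(t)=\tfrac{1}{p}|t|^{p}$ with $p=2+\varepsilon>1$ is $C^1(\mathbb R)$ with $\phi'(t)=\mathrm{sgn}(t)|t|^{p-1}$. Applied pointwise and integrated, this gives
\[
J_3(\tilde f+F)-J_3(\tilde f)=\gamma_f\!\int_{Q_c}\!\mathrm{sgn}(\tilde f)|\tilde f|^{1+\varepsilon}F\,dx\,dt+R(\tilde f,F),
\]
where, by the Taylor formula with integral remainder, $R(\tilde f,F)=\gamma_f\int_{Q_c}\int_0^1(1-\theta)\,\phi''(\tilde f+\theta F)\,d\theta\,F^2\,dx\,dt$ with $\phi''(t)=(p-1)|t|^{p-2}$. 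Using H\"older's inequality with exponents $\frac{p}{p-2}$ and $\frac{p}{2}$ on $Q_c$ bounds $|R(\tilde f,F)|$ by $C\,(\|\tilde f\|_{L^{p}}+\|F\|_{L^{p}})^{p-2}\|F\|^2_{L^{p}(Q_c)}=o(\|F\|_{L^{p}(Q_c)})$ as $\|F\|_{L^{p}(Q_c)}\to0$, which is exactly Fréchet differentiability on $L^{2+}(Q_c)$. Continuity of the linear functional $F\mapsto\gamma_f\int_{Q_c}\mathrm{sgn}(\tilde f)|\tilde f|^{1+\varepsilon}F$ follows from H\"older, since $|\tilde f|^{1+\varepsilon}\in L^{(2+\varepsilon)/(1+\varepsilon)}(Q_c)$, the conjugate exponent of $2+\varepsilon$.

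The main obstacle, as sketched, is handling the quadratic remainder in $J_3$ because of the possibly singular factor $|\tilde f+\theta F|^{p-2}$ when $p<2$ is replaced by $p=2+\varepsilon$ (here $p-2=\varepsilon>0$, so the factor is actually continuous and bounded on bounded subsets of $L^p$, making the estimate genuinely routine); the only point requiring care is to keep the $L^p$-integrability uniform in $\theta\in[0,1]$, which follows from the convexity inequality $|\tilde f+\theta F|^{p-2}\le C(|\tilde f|^{p-2}+|F|^{p-2})$. Combining the three derivatives yields \eqref{C7}.
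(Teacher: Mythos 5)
Your proof is correct. Note that the paper itself states this lemma without proof, treating it as a standard computation, so there is no authorial argument to compare yours against; your write-up supplies the missing verification in full. The two quadratic terms are handled exactly as one would expect (exact expansion plus a remainder controlled by the continuous embeddings $\widehat{W}_2\hookrightarrow L^2(Q)$ and $\widehat{X}_{2+}\hookrightarrow L^2(Q)$), and the only genuinely nonroutine point — differentiability of $f\mapsto \frac{1}{p}\|f\|_{L^p}^p$ with $p=2+\varepsilon$ — is settled correctly: since $p>2$, the scalar map $\phi(t)=\frac{1}{p}|t|^p$ is $C^2$ with $\phi''(t)=(p-1)|t|^{p-2}$ continuous (and vanishing at $0$), the subadditivity $|a+b|^{p-2}\le |a|^{p-2}+|b|^{p-2}$ for $p-2=\varepsilon\in(0,1)$ gives a uniform-in-$\theta$ majorant, and H\"older with exponents $\frac{p}{p-2}$ and $\frac{p}{2}$ yields a remainder of order $\|F\|_{L^p(Q_c)}^2=o(\|F\|_{L^p(Q_c)})$. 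The closing paragraph is slightly garbled in its phrasing about $p<2$ versus $p=2+\varepsilon$, but the parenthetical resolves the confusion correctly and nothing in the argument depends on it.
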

					\begin{lemma}
						The operator ${G}:\mathbb{X}\rightarrow\mathbb{Y}$  is continuous-Fr\'echet differentiable and the Fr\'echet derivative of  ${G}$ in 
						$\tilde{s}=(\tilde{u},\tilde{v},\tilde{f})\in\mathbb{X}$ in the direction $r=(U,V,F)\in\mathbb{X}$ is the linear operator
						${G}'(\tilde{s})[r]=(G_1'(\tilde{s})[r],G_2'(\tilde{s})[r])$ defined by 
						\begin{equation}\label{C8}
							\left\{
							\begin{array}{l}
								\langle G_1'(\tilde{s})[r], \varphi\rangle=
								\langle \partial_tU, \varphi\rangle
								+(\nabla U- \kappa \, U\nabla\tilde{v}-\kappa\,  \tilde{u}\nabla V, \nabla\varphi)
								\\ \qquad \qquad \qquad
								+ (-r\, U + 2\mu\, \tilde{u}\, U, \varphi), \quad \forall\, \varphi\in L^2(H^1)
								\vspace{0.1cm}\\
								G_2'(\tilde{s})[r]=\partial_tV-\Delta V+V-U-\tilde{f}\,V\,1_{\Omega_c}-F\tilde{v} \,1_{\Omega_c}.
							\end{array}
							\right.
						\end{equation}
					\end{lemma}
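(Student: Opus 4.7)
The plan is to verify differentiability term by term after decomposing $G=(G_1,G_2)$ into its linear-continuous and nonlinear parts. The terms $\partial_t u$, $\nabla u\cdot\nabla\varphi$, and $-ru$ in $G_1$, together with $\partial_t v,-\Delta v,v,-u$ in $G_2$, are linear continuous from $\mathbb{X}$ into $\mathbb{Y}$ and contribute their own linear parts to $G'(\tilde{s})[r]$ with zero remainder. The core of the argument therefore reduces to three genuinely nonlinear expressions: the quadratic $\mu u^2$ in $G_1$, the bilinear $-\kappa u\nabla v$ in $G_1$, and the bilinear $-fv\,1_{\Omega_c}$ in $G_2$.

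For each of these I expand $G_i(\tilde{s}+r)-G_i(\tilde{s})$ and show that, after subtracting the candidate linear part prescribed by \eqref{C8}, the remainder is $o(\|r\|_{\mathbb{X}})$ in the appropriate target space. Concretely, $(\tilde{u}+U)^2-\tilde{u}^2=2\tilde{u}U+U^2$, and the quadratic remainder $U^2$ is controlled by the 2D embedding $\widehat{W}_2\hookrightarrow L^4(Q)$ (Ladyzhenskaya, as already used in Step~5 of Section~\ref{existence}), giving $\|U^2\|_{L^2(Q)}\le C\|U\|_{\widehat{W}_2}^2$, hence $o(\|U\|_{\widehat{W}_2})$ in $L^2((H^1)')$. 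The bilinear remainder from $-\kappa u\nabla v$ reduces to $-\kappa U\nabla V$, and since $\nabla V\in L^{4+}(Q)$ for $V\in\widehat{X}_{2+}$ in 2D (as already exploited in Lemma~\ref{compact}), Hölder yields $\|U\nabla V\|_{L^2(Q)}\le\|U\|_{L^4}\|\nabla V\|_{L^4}\le C\|U\|_{\widehat{W}_2}\|V\|_{\widehat{X}_{2+}}$. Finally, the remainder from $-fv\,1_{\Omega_c}$ equals $-FV\,1_{\Omega_c}$, which lies in $L^{2+}(Q)$ with $\|FV\|_{L^{2+}(Q)}\le\|F\|_{L^{2+}(Q_c)}\|V\|_{L^\infty(Q)}$ thanks to the embedding $\widehat{X}_{2+}\hookrightarrow L^\infty(Q)$. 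All three remainders are quadratic in $\|r\|_{\mathbb{X}}$, establishing Fréchet differentiability with derivative given by \eqref{C8}.

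For the continuity of $\tilde{s}\mapsto G'(\tilde{s})$ from $\mathbb{X}$ into the space of bounded linear operators $\mathbb{X}\to\mathbb{Y}$, I observe that $G'(\tilde{s})[r]$ depends on $\tilde{s}$ only through $\tilde{u}$, $\tilde{v}$, $\tilde{f}$ appearing linearly as coefficients of $U$, $V$, $\nabla V$, and $F$. Plugging the differences $\tilde{s}_1-\tilde{s}_2$ into the very same Hölder bounds used above gives $\|G'(\tilde{s}_1)-G'(\tilde{s}_2)\|_{\mathcal{L}(\mathbb{X},\mathbb{Y})}\le C\|\tilde{s}_1-\tilde{s}_2\|_{\mathbb{X}}$, which proves $G\in C^1$ (in fact Lipschitz-differentiable).

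The main obstacle is not conceptual but organizational: one must verify that every product in the expansions lands in the correct target space, i.e.~$L^2((H^1)')$ for $G_1'$ and $L^{2+}(Q)$ for $G_2'$. This works precisely because of the 2D embeddings $\widehat{W}_2\hookrightarrow L^4(Q)$ and $\widehat{X}_{2+}\hookrightarrow L^\infty(Q)$ with $\nabla v\in L^{4+}(Q)$, the same embeddings already exploited in Steps~4--6 of the proof of Theorem~\ref{weak_solution}. In other words, the function spaces $\widehat{W}_2$, $\widehat{X}_{2+}$, $L^{2+}(Q_c)$ have been chosen precisely so that $G$ is an at most quadratic polynomial map with continuous derivative, and no delicate argument beyond bookkeeping is required.
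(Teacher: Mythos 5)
Your proof is correct: the paper states this lemma without proof (treating it as a standard verification), and your argument — splitting $G$ into its linear part plus the quadratic/bilinear terms $\mu u^2$, $-\kappa u\nabla v$, $-fv\,1_{\Omega_c}$, checking that the remainders $\mu U^2$, $-\kappa U\nabla V$, $-FV\,1_{\Omega_c}$ are quadratic in $\|r\|_{\mathbb{X}}$ via the 2D embeddings $W_2\hookrightarrow L^4(Q)$, $X_{2+}\hookrightarrow L^\infty(Q)$, $\nabla V\in L^{4+}(Q)$, and deducing Lipschitz continuity of $\tilde{s}\mapsto G'(\tilde{s})$ from the affine dependence of the derivative on $\tilde{s}$ — is exactly the argument the authors leave implicit. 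No gaps.
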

					
					\
					
					%The aim is to prove the existence of Lagrange multipliers,
					%which is guaranteed if a  local optimal solution of  problem (\ref{problem-1}) is a regular point  of operator $G$
					%(in virtue of Theorem \ref{abs6}).
					\begin{remark}\label{regular_point}
						%Since for  problem (\ref{problem-1}) $\mathcal{N}=\{{\bf 0}\}$, then $\mathcal{N}({G}(\tilde{s}))=\{{\bf 0}\}$. Thus
						From Definition \ref{abs4}  one has  that 
						$\tilde{s}=(\tilde{u},\tilde{v},\tilde{f})\in\mathcal{S}_{ad}$
						is a regular point  if for  any $(g_u,g_v)\in\mathbb{Y}$ there exists $r=(U,V,F)\in \widehat{W}_2\times \widehat{X}_{2+}\times\mathcal{C}(\tilde{f})$
						such that
						\begin{equation*}\label{regular}
							{G}'(\tilde{s})[r]=(g_u,g_v),
						\end{equation*}
						where $\mathcal{C}(\tilde{f}):=\{\theta(f-\tilde{f})\,:\, \theta\ge0,\, f\in\mathcal{F}\}$ is the conical hull of  $\tilde{f}$ in $\mathcal{F}$. 
					\end{remark}

					\subsection{The linearized problem \eqref{C8} is surjective}
					
					\begin{lemma}\label{regular-lemma}
						Let $\tilde{s}=(\tilde{u},\tilde{v},\tilde{f})\in\mathcal{S}_{ad}$  ($\mathcal{S}_{ad}$ defined in (\ref{admi-1})), then $\tilde{s}$ is a regular point.
					\end{lemma}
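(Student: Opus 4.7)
The plan is to reduce the regularity condition to solving a linear coupled parabolic system with zero initial data and to exploit the fact that $\mathcal{C}(\tilde{f})$ contains the origin.

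\smallskip

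Given $(g_u, g_v) \in L^2((H^1)') \times L^{2+}(Q)$, I would first observe that $F = 0 \in \mathcal{C}(\tilde{f})$ (take $\theta = 0$), so it suffices to find $(U,V) \in \widehat{W}_2 \times \widehat{X}_{2+}$ satisfying the linear system
\begin{equation*}
\left\{
\begin{array}{l}
\partial_t U - \Delta U - \kappa \, \nabla \cdot (U\,\nabla \tilde v) - \kappa\, \nabla\cdot(\tilde u\, \nabla V) + (2\mu \tilde u - r)\,U = g_u \ \ \hbox{in $L^2((H^1)')$,}\\
\partial_t V - \Delta V + V - \tilde f\, V\, 1_{\Omega_c} = U + g_v\quad \hbox{in $L^{2+}(Q),$}
\end{array}
\right.
\end{equation*}
with $U(0)=V(0)=0$ and $\partial_\n V=0$. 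Having reduced the problem to this linear system, the argument parallels the existence proof of Section~\ref{existence} but is simpler because of linearity.

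\smallskip

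The main step would be to apply the Leray--Schauder theorem (Theorem~\ref{LSFP}) to a map $\Psi:\bar U \in L^{4-}(Q) \mapsto U \in L^{4-}(Q)$ constructed as follows: given $\bar U \in L^{4-}(Q)$, first solve the $V$-equation with right-hand side $\bar U + g_v + \tilde f\,V\,1_{\Omega_c}$. Since in $2D$ one has $\bar U \in L^{2+}(Q)$, $\tilde f \in L^{2+}(Q_c)$ and $V$ will lie in $L^\infty(Q)$ (so that $\tilde f\, V \in L^{2+}(Q)$), Theorem~\ref{feireisl} produces a unique $V \in \widehat{X}_{2+}$. Next, using this $V$, I would solve the linear parabolic $U$-equation with coefficients $\nabla \tilde v \in L^{4+}(Q)$ and $\tilde u \in L^4(Q)$, obtaining $U \in \widehat{W}_2 \hookrightarrow L^{4-}(Q)$. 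Continuity of $\Psi$ follows from continuous dependence on data in both linear problems, and compactness from the Aubin--Lions lemma (Theorem~\ref{AL}).

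\smallskip

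For the boundedness of the Leray--Schauder set $\{(U,V) : (U,V) = \alpha\,\Psi(U,V),\ \alpha\in[0,1]\}$ (equivalently, uniform \emph{a priori} estimates on the linear system with $g_u, g_v$ scaled by $\alpha$), I would test the $U$-equation with $U$ and the $V$-equation with $-\Delta V + V$, adding and absorbing the cross terms. The cross term $\kappa(\tilde u\,\nabla V, \nabla U)$ is handled by $\|\tilde u\|_{L^4}\|\nabla V\|_{L^4}\|\nabla U\|_{L^2}$ and the $2D$ interpolation $\|\nabla V\|_{L^4}\le C\|\nabla V\|_{L^2}^{1/2}\|V\|_{H^2}^{1/2}$, while the term $\kappa(U\nabla\tilde v,\nabla U)$ uses $\nabla\tilde v\in L^{4+}(Q)$; the coupling $(U,-\Delta V)$ on the $V$-side is controlled by Young. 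Gronwall's lemma then produces the $L^\infty(L^2)\cap L^2(H^1)$ bound on $U$ and the $L^\infty(H^1)\cap L^2(H^2)$ bound on $V$. A final bootstrap via Theorem~\ref{feireisl}, using that the right-hand side of the $V$ equation lies in $L^{2+}(Q)$, upgrades $V$ to $\widehat{X}_{2+}$.

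\smallskip

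The main obstacle is closing the energy estimates despite the cross coupling through $\nabla\cdot(\tilde u\,\nabla V)$, especially since $\tilde u$ enjoys only $L^\infty(L^2)\cap L^2(H^1)$ regularity; the $2D$ interpolation inequalities (already used in Steps~4--5 of Lemma~\ref{cot-1}) are crucial. Once the linear system has a solution, taking $(U,V,F)$ with $F=0$ yields the required preimage, so $G'(\tilde s)[\mathcal{C}(\tilde s)] = \mathbb{Y}$ and $\tilde s$ is regular.
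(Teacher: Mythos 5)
Your proposal follows essentially the same route as the paper: reduce to the linear system (\ref{C9}) using $0\in\mathcal{C}(\tilde f)$, apply Leray--Schauder to a decoupled iteration map valued in $\widehat{W}_2\times\widehat{X}_{2+}$, close the \emph{a priori} estimates by testing with $U$ and $V-\Delta V$ together with the $2D$ interpolation $\|w\|_{L^4}\le C\|w\|_{L^2}^{1/2}\|w\|_{H^1}^{1/2}$, and bootstrap $V$ into $\widehat X_{2+}$ via Theorem~\ref{feireisl}. The one wrinkle is your choice of fixed-point variable: by iterating only on $\bar U$ you must solve the $V$-equation \emph{with} the zeroth-order term $-\tilde f\,V\,1_{\Omega_c}$ already present, and as written your appeal to Theorem~\ref{feireisl} is circular (the right-hand side $\bar U+g_v+\tilde f\,V\,1_{\Omega_c}$ contains the unknown $V$, whose membership in $L^\infty(Q)$ is exactly what you are trying to establish). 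The paper sidesteps this by taking the fixed-point map on the pair $(\bar U,\bar V)\in L^{4-}(Q)\times L^\infty(Q)$ and placing $\tilde f\,\bar V\,1_{\Omega_c}$ on the right-hand side, so that Theorem~\ref{feireisl} applies directly; you should either do the same or supply a separate existence argument for the perturbed heat equation. This is a repairable structural point rather than a flaw in the strategy.
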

					
					\begin{proof}
						Fixed $(\tilde{u},\tilde{v},\tilde{f})\in\mathcal{S}_{ad}$, let  $(g_u,g_v)\in L^2((H^1)')\times L^{2+}(Q)$. Since $0\in\mathcal{C}(\tilde{f})=\{\theta(f-\tilde{f})\,:\, \theta\ge0,\, f\in\mathcal{F}\}$, it suffices to show  the existence of	$(U,V)\in \widehat{W}_2 \times \widehat{X}_{2+}$ solving the linear problem
						\begin{equation}\label{C9}
							\left\{
							\begin{array}{l}
								\langle \partial_tU, \varphi\rangle
								+(\nabla U- \kappa\, U\nabla\tilde{v}- \kappa\, \tilde{u}\nabla V, \nabla\varphi)
								\\
								\qquad + (-r\, U + 2\mu\, \tilde{u}\, U, \varphi) = \langle g_u, \varphi\rangle \quad\forall\, \varphi\in L^{2}(H^{1}),
								\vspace{0.1cm}\\
								\partial_tV-\Delta V+V-U-\tilde{f}\,V \,1_{\Omega_c}=g_v \quad \mbox{ in }L^{2+}(Q).
								%\\
								%U(0,\cdot)=0,\ V(0,\cdot)&=&0 \quad\mbox{ in }\Omega,\\
								% \dfrac{\partial V}{\partial{\bf n}}&=&0 \quad\mbox{ on }(0,T)\times\partial\Omega.
							\end{array}
							\right.
						\end{equation}
						For this, we will use  	the Leray-Schauder fixed-point Theorem, for the operator 
						\begin{equation}\label{operatorS}
							S:(\overline{U},\overline{V})\in L^{4-}(Q)\times L^{\infty}(Q)\mapsto(U,V)\in \widehat{W}_{2}\times \widehat{X}_{2+}
						\end{equation} 
						where $(U,V)$ is the solution of the decoupled	problem (first computing $V$ and after $U$)
						\begin{equation}\label{modlin}
							\left\{
							\begin{array}{l}
								\langle \partial_t U, \varphi\rangle
								+(\nabla U- \kappa \, U\nabla\tilde{v}- \kappa\,\tilde{u}\nabla V, \nabla\varphi)
								\\
								\qquad =  (r\, \overline U - 2\mu\, \tilde{u}\, \overline U, \varphi) 
								+ \langle g_u, \varphi\rangle \quad\forall\, \varphi\in L^{2}(H^{1}),
								\vspace{0.1cm} \\
								\partial_t V-\Delta V+V= \overline{U}+\tilde{f}\overline{V} 1_{\Omega_c}+g_v\quad\mbox{ in }Q .
							\end{array}
							\right.
						\end{equation}
						%
						%
						%can use the Leray-Schauder fixed point argument over the operator
						%$S:(\overline{U},\overline{V})\in X\times X\mapsto(U,V)\in {X}_{20/11}\times {X}_{20/11}$, with $X:=C(L^2)\cap L^2(H^1)$ and $(U,V)$ the solution of the decoupled
						%problem
						%\begin{equation}\label{modlin}
						%\left\{
						%\begin{array}{rcl}
						%\partial_tU-\Delta U - \nabla\cdot(\tilde{u}\nabla V) &=&\nabla\cdot(\overline{U}\nabla\tilde{v})+g_u\quad\mbox{ in }Q,\\
						%\partial_tV-\Delta V+V&=& \overline{U}+\tilde{f}\overline{V} 1_{\Omega_c}+g_v\quad\mbox{ in }Q,
						%\end{array}
						%\right.
						%\end{equation}

						\
						
						{\sl Step 1 ($S$ is well-defined and bounded):} Prove  that operator $S$ defined in (\ref{operatorS}) maps bounded sets in $ L^{4-}(Q)\times L^{\infty}(Q)$  in bounded sets in $(U,V)\in {W}_{2}\times {X}_{2+}$. 
						
						\
						
						For this,  
						one first bound $V$ and later bound $U$. 
						Indeed,  since $(\overline{U}, \overline{V})\in L^{4-}(Q)\times L^{\infty}(Q)$ implies $f\, \overline{V} \in L^{2+}(Q)$, then by applying $L^{2+}$-regularity to the heat equation 
						(\ref{modlin})$_2$ (Theorem \ref{feireisl}), it is deduced that $V \in X_{2+}$ and 
						\begin{equation}\label{AA}
							\begin{array}{rcl}
								\Vert V \Vert_{X_{2+}} 
								& \le & C \, \left(
								\Vert 
								\overline{U}+\tilde{f}\overline{V} 1_{\Omega_c}\Vert_{L^{2+}(Q)}
								+\Vert g_v \Vert_{L^{2+}(Q)}
								\right)
								\vspace{0.1cm}\\
								& \le &
								C \, \left(
								\Vert 
								\overline{U}\Vert_{L^{2+}} 
								+
								\Vert \tilde{f}\Vert_{L^{2+}(Q_c)}
								\Vert \overline{V} \Vert_{L^{\infty}}
								+ \Vert g_v 
								\Vert_{L^{2+}(Q)}
								\right)
							\end{array}
						\end{equation}
						By taking $\varphi=U$ in (\ref{modlin})$_1$, we arrive at
						\begin{equation}\label{BB}
							\begin{array}{l}
								\frac{d}{dt}\Vert U \Vert^2_{L^2} 
								+ \Vert U \Vert^2_{H^1}
								\le  
						 C_1(1+ \Vert \nabla\tilde{v} \Vert_{L^{4}}^4) \Vert U \Vert^2_{L^2}
\vspace{0.1cm}\\ 
		\qquad \qquad
+C_2 \Big( 			\Vert \tilde{u} \Vert ^2_{L^{4}}  
								\Vert \nabla V\Vert^2_{L^{4}}
								 + (1+  \Vert \tilde{u} \Vert_{ L^{4}}^2)  \Vert \overline U\Vert_{L^{2}}^2
								+\Vert g_u \Vert_{(H^1)'}^2 \Big)
						\end{array}
						\end{equation}
						
						Finally, using $2D$ interpolation estimates, 
						$$
						\Vert \tilde{u} \Vert _{L^{4}(Q)}^2   \Vert \nabla V\Vert_{L^{4}(Q)}^2
						\le 
						\Vert \tilde{u} \Vert _{W_2}^2   \Vert V\Vert_{X_2}^2
						$$	
						%	Since $(\tilde{u},\tilde{v}) \in W_2\times X_{2+}$ implies $(\tilde u,\nabla \tilde{v}) \in L^{4}(Q)$,
						Then, using (\ref{AA}), the Gronwall Lemma applied to (\ref{BB}) guarantees the bound   for $U$ in $W_2$.
						
						%  it is deduced that bounded sets in $ L^{4-}(Q)\times L^{\infty}(Q)$  are mapped in bounded sets in $ {W}_{2}\times {X}_{2+}$.
						
						\
						
						{\sl Step 2 (compactness):} 
						By using that ${W}_{2}\times {X}_{2+}$ is compactly embedded in $L^{4-}(Q)\times L^{\infty}(Q)$, then operator $S$ is compact. 
						
						\
						
						{\sl Step 3 (continuity):} 
						%
						%From the Aubin-Lions lemma (see \cite[Th\'eor\`eme 5.1, p. 58]{lions} and \cite[Corollary 4]{simon}) and using Sobolev embedding,
						%we deduce that ${X}_{20/11}$ is compactly embedded in $X=C([0,T];L^2) \cap L^2(H^1)$. 
						In particular, using Steps 1 and 2, it is not difficult to prove the continuity of $S$ from $L^{4-}(Q)\times L^{\infty}(Q)$ to itself. 
						
						\
						
						{\sl Step 4 (boundedness of possible fixed-points):} Now,  the aim is to show that the set
						of the possible fixed-points of $\alpha S$ with $\alpha\in [0,1]$  defined as 
						$S_\alpha:=\{(U,V)\in \widehat{W}_{2}\times \widehat{X}_{2+}\,:\, (U,V)=\alpha S(U,V)\mbox{ for some }\alpha\in[0,1]\}$ 
						is bounded in $L^{4-}(Q)\times L^{\infty}(Q)$ (with respect to $\alpha$).  
						%which is 
						%similar to the proof of Lemma \ref{fix}. In fact, Step 2 and Step 3 are easier and can be proved jointly. 
						Indeed, if $(U,V)\in S_\alpha,$ then $(U,V)\in \widehat{W}_{2}\times \widehat{X}_{2+}$ and solves the coupled 
						linear problem
						\begin{equation}\label{C9-1}
							\left\{
							\begin{array}{l}
								\langle \partial_tU, \varphi\rangle
								+(\nabla U- \kappa \,U\nabla\tilde{v}-\kappa\, \tilde{u}\nabla V, \nabla\varphi)
								\\
								\qquad =\alpha (r\, U - 2\mu\, \tilde{u}\, U, \varphi) + \alpha\langle g_u, \varphi\rangle 
								\quad\forall\, \varphi\in L^{2}(H^{1}),
								\vspace{0.1cm}
								\\
								\partial_tV-\Delta V+V=\alpha U+\alpha\tilde{f}V 1_{\Omega_c}+\alpha g_v\ \mbox{ in }Q .
							\end{array}
							\right.
						\end{equation}
						%endowed with the corresponding initial and boundary conditions. 
						Then, taking $\varphi=U$ in (\ref{C9-1})$_1$, one obtains (see \eqref{BB}): 
						\begin{equation}\label{lin1}
							\begin{array}{l}\displaystyle
								\frac{d}{dt} \Vert U \Vert^2 + \Vert \nabla U \Vert^2
								+2 \alpha  \,  \mu  \, \int_\Omega  \tilde{u}\,  U ^2
								\\
								\qquad \le C  \left(
								\alpha +\Vert \nabla \tilde{v} \Vert_{L^4}^4
								\right) \, \Vert U \Vert^2 
								+ C \, \Vert \tilde{u} \Vert_{L^{4}}^4 \Vert \nabla V \Vert^2
								+ \alpha^2 \, \Vert g_u \Vert_{(H^1)'}^2 .
							\end{array}
						\end{equation}

						Now, testing (\ref{C9-1})$_2$ by $V-\Delta V \in L^{2+}(Q)$, it holds:
						\begin{equation}\label{lin2}
							\frac{d}{dt} \Vert V \Vert_{H^1}^2 + \Vert V \Vert_{H^2}^2 \le 
							C \, \alpha^2 \Vert f \Vert_{L^{2+}}^2 \, \Vert V \Vert_{H ^1}^2
							+ \alpha^2 \, \left(
							\Vert g_v \Vert^2 + \Vert U \Vert^2 
							\right).
						\end{equation}
						From (\ref{lin1}) and (\ref{lin2}) and using that $\alpha\le 1$:
						$$
						\begin{array}{l}
							\displaystyle \frac{d}{dt} \left(
							\Vert U \Vert^2 +  \Vert V \Vert_{H^1}^2 \right)
							+  \| U \Vert_{H^1}^2 +
							\Vert V \Vert_{H^2}^2
							\vspace{0.1cm}\\
							\le 
							C \, \left(
							1+\Vert \nabla \tilde{v} \Vert_{L^4}^4
							\right) \, \Vert U \Vert^2 
							+ C \,  \Big( \Vert f \Vert_{L^{2+}}^2 
							+ \Vert \tilde{u} \Vert_{L^{4}}^4
							\Big)
							\Vert V \Vert_{H^1}^2
							\vspace{0.1cm}\\
							+   
							C \, \Big(
							\Vert g_u \Vert_{(H^1)'}^2 
							+ \Vert g_v \Vert^2
							\Big).
						\end{array}
						$$
						Using that $U(0)=V(0)=0$ and $\Vert g_u \Vert_{L^2((H^1)')}$, $\Vert g_v \Vert_{L^2(L^2)}$, $\Vert f \Vert_{L^{2+}}$, $\Vert \tilde{u} \Vert_{L^{4}(Q)}$ and
						$\Vert\nabla \tilde{v} \Vert_{L^4(Q)}$ are constant finite values, then the Gronwall Lemma implies that
						\begin{equation}\label{C19-b}
							\Vert (U,V) \Vert_{W_2\times X_2} \le C.
						\end{equation}
						
						Finally, by applying $L^{2+}$-regularity provided by Theorem \ref{feireisl} related to the parabolic-Neumann problem, one has 
						$$
						\Vert V \Vert_{X_{2+}} \le C.
						$$
						\
						
						{\sl Step 5:} Conclusion: By applying Leray-Schauder fixed-point theorem (Theorem \ref{LSFP}), one has the existence of $(U,V)\in W_2\times X_{2+}$ a solution of  problem \eqref{C9}. The uniqueness of solution is directly deduced from the linearity of  problem \eqref{C9}.
					\end{proof}

					\subsection{Existence of Lagrange multipliers}

					Now, the existence of Lagrange multiplier for problem (\ref{func}) associated to any local optimal solution $\tilde{s}=(\tilde{u},\tilde{v},\tilde{f})\in\mathcal{S}_{ad}$ will be shown.
					\begin{theorem}\label{lagrange}
						Let $\tilde{s}=(\tilde{u},\tilde{v},\tilde{f})\in\mathcal{S}_{ad}$ be a local optimal solution for the control problem (\ref{func}). Then, there exists a Lagrange multiplier 
						$\xi=(\lambda,\eta)\in L^{2}(H^1)\times L^{2-}(Q)$ such that for all 
						$(U,V,F)\in \widehat{W}_2 \times \widehat{X}_{2+}\times\mathcal{C}(\tilde{f})$
						\begin{eqnarray}
							&&\gamma_u\int_0^T\int_{\Omega} (\tilde{u}-u_d) U+\gamma_v\int_0^T\int_{\Omega}(\tilde{v}-v_d)V
							+\gamma_f\int_0^T\int_{\Omega_c}  {\rm sgn}(\tilde{f})|\tilde{f}|^{1+} F
							\nonumber\\
							&&
							-\int_0^T \langle \partial_t U, \lambda\rangle -\int_0^T\int_\Omega 
							(\nabla U- \kappa \, U\nabla\tilde{v}- \kappa\,\tilde{u}\nabla V,\nabla\lambda)
							+ (-r U + 2 \mu \tilde{u} U , \lambda)
							\nonumber\\
							&&-\int_0^T\int_\Omega\bigg(\partial_tV-\Delta V+V-U-\tilde{f}V 1_{\Omega_c}\bigg)\eta+\int_0^T\int_{\Omega_c}F\tilde{v} 1_{\Omega_c} \eta\ge0.\label{M1}
						\end{eqnarray}
					\end{theorem}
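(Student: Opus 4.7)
The plan is to apply the Zowe--Kurcyusz Theorem \ref{abs6} to the reformulated control problem \eqref{problem-1}. All three hypotheses have already been secured in the excerpt: the functional $J$ is Fréchet differentiable at $\tilde{s}$ with derivative given by \eqref{C7}; the constraint operator $G$ is continuously Fréchet differentiable at $\tilde{s}$ with derivative given by \eqref{C8}; and $\tilde{s}$ is a regular point by Lemma \ref{regular-lemma}. Consequently, Theorem \ref{abs6} furnishes a Lagrange multiplier $\xi \in \mathbb{Y}'$ satisfying the abstract optimality inequality $\mathcal{L}_s'(\tilde{s}, \xi)[r] \ge 0$ for every $r \in \mathcal{C}(\tilde{s})$.

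Next I would identify $\mathbb{Y}' = \bigl(L^2(0,T;(H^1)') \times L^{2+}(Q)\bigr)'$. Since $H^1(\Omega)$ is reflexive, the dual of $L^2(0,T;(H^1)')$ is $L^2(0,T;H^1(\Omega))$, and the dual of $L^{2+}(Q)$ is $L^{(2+)'}(Q) = L^{2-}(Q)$ in the convention of the paper. Thus $\xi = (\lambda, \eta) \in L^2(H^1) \times L^{2-}(Q)$, precisely the regularity announced in the statement. I would then unravel the conical hull $\mathcal{C}(\tilde{s})$: because $\mathbb{M}$ is obtained by translating the product $\widehat{W}_2 \times \widehat{X}_{2+} \times \mathcal{F}$, and the first two factors are linear subspaces (hence coincide with their own conical hulls at every point), one obtains $\mathcal{C}(\tilde{s}) = \widehat{W}_2 \times \widehat{X}_{2+} \times \mathcal{C}(\tilde{f})$.

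Finally, I would unfold the abstract inequality $\mathcal{L}_s'(\tilde{s},\xi)[r] \ge 0$ for a generic direction $r = (U,V,F) \in \widehat{W}_2 \times \widehat{X}_{2+} \times \mathcal{C}(\tilde{f})$, using Remark \ref{lagrag-problem} together with the explicit derivative formulas \eqref{C7} and \eqref{C8}. The three contributions of $J'(\tilde{s})[r]$ give the first line of \eqref{M1}, the expansion of $-\langle \lambda, G_1'(\tilde{s})[r]\rangle$ gives the second line, and splitting $-(\eta, G_2'(\tilde{s})[r])$ into the part depending on $(U,V)$ and the part $F\tilde{v}\,1_{\Omega_c}$ produces the last two groups of terms, which matches \eqref{M1} exactly.

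No substantive obstacle remains at this stage: the real work—continuous Fréchet differentiability of $G$ and, above all, the regular-point condition whose proof relied on a delicate Leray--Schauder argument for the linearized system in Lemma \ref{regular-lemma}—has already been absorbed. What is left is careful bookkeeping of the duality pairings $L^2(H^1)$ versus $L^2((H^1)')$ and $L^{2-}(Q)$ versus $L^{2+}(Q)$, together with an explicit decomposition of $\mathcal{C}(\tilde{s})$ reflecting the fact that the state components of $\mathbb{M}$ are unconstrained (affine) while the control component genuinely sees the convex constraint $\mathcal{F}$.
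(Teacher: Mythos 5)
Your proposal is correct and follows exactly the paper's own argument: invoke Lemma \ref{regular-lemma} to get the regular-point condition, apply Theorem \ref{abs6} to obtain $\xi=(\lambda,\eta)\in L^2(H^1)\times L^{2-}(Q)$, and unfold $\mathcal{L}_s'(\tilde{s},\xi)[r]\ge 0$ via \eqref{C7} and \eqref{C8}. Your added bookkeeping on the duality identifications and the decomposition of $\mathcal{C}(\tilde{s})$ is consistent with, and slightly more explicit than, what the paper writes.
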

					\begin{proof}
						From Lemma \ref{regular-lemma}, $\tilde{s}\in\mathcal{S}_{ad}$ is a regular point, then from Theorem \ref{abs6} 
						%satisfies the characterization given in Definition \ref{abs2}, that is, 
						there exists a Lagrange multiplier $\xi=(\lambda,\eta)\in L^{2}(H^1)\times L^{2-}(Q)$ such that
						by (\ref{abs3})$_2$ and Remark \ref{lagrag-problem} one must satisfy
						\begin{equation}\label{M1-1}
							\mathcal{L}'_s(s,\lambda,\eta)[r]=J'(\tilde{s})[r]-\langle\lambda, G_1'(\tilde{s})[r]\rangle_{L^{2}(H^1), L^{2}((H^1)'}-( \eta, G_2'(\tilde{s})[r])_{L^{2}}\ge 0,
						\end{equation}
						for all $r=(U,V,F)\in\widehat{W}_2\times \widehat{X}_{2+} \times\mathcal{C}(\tilde{f}).$ 
						The proof follows from (\ref{C7}), (\ref{C8}), and (\ref{M1-1}).
					\end{proof}
					
					\
					
					From Theorem \ref{lagrange},  an optimality system for problem (\ref{func}) can be derived. 
					%by considering the  spaces 
					%\begin{equation*}\label{spaces_optimality}
					%{\color{blue}\widehat{X}_2 =\{u\in X_2}\,:\, u(0)=0\},\quad {\color{blue}\widehat{X}_4}=\{v\in{\color{blue}X_4}\,:\, v(0)=0\}.
					%\end{equation*}
					\begin{corollary}
						Let $\tilde{s}=(\tilde{u},\tilde{v},\tilde{f})\in\mathcal{S}_{ad}$ be a local optimal solution for the control problem (\ref{func}). Then any Lagrange multiplier
						$(\lambda,\eta)\in L^{2}(H^1)\times L^{2-}(Q)$, provided by Theorem \ref{lagrange}, satisfies the system
						\begin{eqnarray}
							&&\int_0^T\langle \partial_tU, \lambda\rangle
							+\int_0^T\int_\Omega (\nabla U- \kappa \,U\nabla\tilde{v})\cdot \nabla \lambda
							+ (-r U + 2 \mu \tilde{u} U , \lambda)
							-\int_0^T\int_\Omega U\eta
							\nonumber\\
							&&\hspace{0.7cm}=\gamma_u\int_0^T\int_{\Omega} (\tilde{u}-u_d) U,\qquad  \forall \, U\in\widehat{W}_2\label{M2},\\
							&&\int_0^T\int_\Omega\bigg(\partial_t V-\Delta V+V\bigg)\eta
							-\int_0^T\int_{\Omega_c}\tilde{f}V\eta
							+ \kappa\int_0^T\int_\Omega \tilde{u}\nabla V \cdot \nabla \lambda
							\nonumber\\
							&&\hspace{0.7cm}=\gamma_v\int_0^T\int_{\Omega}(\tilde{v}-v_d)V,\qquad  \forall \,V\in\widehat{X}_{2+}\label{M3},
						\end{eqnarray}
						and the optimality condition
						\begin{equation}\label{M5}
							\int_0^T\int_{\Omega_c}(\gamma_f {\rm sgn}(\tilde{f}) |\tilde{f}|^{1+}+\tilde{v}\eta)(f-\tilde{f})\ge 0,\qquad \forall f\in\mathcal{F}.
						\end{equation}
					\end{corollary}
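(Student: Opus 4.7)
The strategy is to derive (\ref{M2}), (\ref{M3}) and (\ref{M5}) from the single variational inequality (\ref{M1}) by specializing the admissible test direction $r=(U,V,F)\in\widehat{W}_2\times\widehat{X}_{2+}\times\mathcal{C}(\tilde{f})$ to live in exactly one of the three components at a time. The geometric point behind the argument is that $\widehat{W}_2$ and $\widehat{X}_{2+}$ are linear subspaces, so the sign of $U$ or $V$ can be flipped freely, whereas $\mathcal{C}(\tilde{f})$ is merely a convex cone and does not share this property; this is precisely what converts the first two pieces into equalities while the third remains an inequality.

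To obtain (\ref{M2}), I would set $V=0$ and $F=0$ in (\ref{M1}) and let $U\in\widehat{W}_2$ be arbitrary. All contributions coming from $V$ or $F$ inside $G_1'(\tilde{s})[r]$ and $G_2'(\tilde{s})[r]$ drop out, and the surviving expression is linear in $U$; the $-U$ appearing inside $G_2'(\tilde{s})[r]$ yields the term $+\int U\eta$ after the overall minus sign is distributed. Since both $U$ and $-U$ are admissible, the inequality collapses to an equality, which after rearrangement is exactly (\ref{M2}). The same $\pm V$ argument applied with $U=0$, $F=0$ and $V\in\widehat{X}_{2+}$ immediately produces (\ref{M3}); here the only $V$-contribution to $-\langle\lambda,G_1'(\tilde{s})[r]\rangle$ is the chemotactic piece $\kappa\int\tilde{u}\,\nabla V\cdot\nabla\lambda$.

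For the variational inequality (\ref{M5}), I would take $U=V=0$ and $F=\theta(f-\tilde{f})\in\mathcal{C}(\tilde{f})$ with $\theta>0$ and $f\in\mathcal{F}$ arbitrary. All $U$- and $V$-dependent terms vanish in (\ref{M1}), leaving
\[
\theta\int_0^T\!\!\int_{\Omega_c}\bigl(\gamma_f\,{\rm sgn}(\tilde{f})|\tilde{f}|^{1+}+\tilde{v}\,\eta\bigr)(f-\tilde{f})\ge 0,
\]
and dividing by $\theta>0$ gives (\ref{M5}). This step cannot be sharpened to an equality because $-F$ need not belong to $\mathcal{C}(\tilde{f})$. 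There is no substantial obstacle: the whole proof is a sign-tracking exercise, the only genuine care being to correctly redistribute the minus signs in front of $\langle\lambda, G_1'(\tilde{s})[r]\rangle$ and $(\eta, G_2'(\tilde{s})[r])$ when splitting the Lagrangian contributions among the $U$, $V$ and $F$ components.
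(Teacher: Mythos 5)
Your proposal is correct and coincides with the paper's own proof: the authors likewise specialize \eqref{M1} to one component at a time, use that $\widehat{W}_2$ and $\widehat{X}_{2+}$ are vector spaces (so $\pm U$, $\pm V$ turn the inequality into the equalities \eqref{M2}--\eqref{M3}), and then take $U=V=0$ with $F=\theta(f-\tilde f)\in\mathcal{C}(\tilde f)$ to obtain \eqref{M5}. Your sign bookkeeping for the $-U$ term inside $G_2'(\tilde s)[r]$ is also consistent with the stated form of \eqref{M2}.
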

					\begin{proof}
						From (\ref{M1}), taking  $(V,F)=(0,0)$, and using that $\widehat{W}_2$ is a vectorial space,  (\ref{M2}) holds. Similarly, taking $(U,F)=(0,0)$ in (\ref{M1}), and taking into account
						that $\widehat{X}_{2+}$ is a vectorial space,  (\ref{M3}) is deduced.
						Finally, taking $(U,V)=(0,0)$ in (\ref{M1})  it holds
						$$
						\gamma_f\int_0^T\int_{\Omega_c} {\rm sgn}(\tilde{f}) |\tilde{f}|^{1+} F+\int_0^T\int_{\Omega_c}\tilde{v}\, \eta\, F\ge0,\quad \forall \, F\in\mathcal{C}(\tilde{f}).
						$$
						Thus, choosing $F=\theta(f-\tilde{f})\in \mathcal{C}(\tilde{f})$ for all $f\in\mathcal{F}$ and $\theta\ge0$, 
						(\ref{M5}) is deduced. 
					\end{proof}
					\begin{remark}\label{very-weak}
						A pair $(\lambda,\eta)\in L^{2}(H^1) \times L^{2-}(Q)$ satisfying (\ref{M2})-(\ref{M3}) corresponds to the concept of very weak solution (at least for the $\eta$-variable)
						of the linear problem
						\begin{equation}\label{M4}
							\left\{
							\begin{array}{rcl}
								-\partial_t\lambda-\Delta\lambda
								-\kappa\,\nabla\lambda\cdot\nabla\tilde{v}
								-\eta
								-r \lambda + 2 \mu \tilde{u} \lambda 
								&=&\gamma_u (\tilde{u}-u_d) \quad \mbox{ in }Q,\\
								-\partial_t\eta-\Delta\eta+\eta
								-\kappa\,\nabla\cdot(\tilde{u}\nabla\lambda)-\tilde{f}\,\eta\, 1_{\Omega_c}&=&\gamma_v(\tilde{v}-v_d)\quad \mbox{ in }Q,\\
								\lambda(T)=0 ,\ \ \eta(T)&=&0\quad \mbox{ in }\Omega,\\
								\dfrac{\partial\lambda}{\partial\n}=0,\ \dfrac{\partial\eta}{\partial\n}&=&0\quad \mbox{ on }(0,T)\times\partial\Omega.
							\end{array}
							\right.
						\end{equation}
					\end{remark}
					
					\subsection{Regularity of Lagrange multipliers}
					
					%\
					%
					%Idea: 1. Identify the linear problem \eqref{M4} satisfied by Lagrange multipliers
					%
					%2. To study where this problem has solution (in $X_2\times W_2$), see Theorem \ref{regularity_lagrange}
					%
					%3. Uniqueness (a solution in $X_2\times W_2$ and other in $L^2(H^1)\times L^2(Q)$ must coincide), see Theorem \ref{regularity-multiplier}
					%
					%{\color{red} Voy por aquí. 01/06/2022}

					\begin{theorem}\label{regularity_lagrange}
						Let $\tilde{s}=(\tilde{u},\tilde{v},\tilde{f})\in\mathcal{S}_{ad}$ be a local optimal solution for the problem (\ref{func}). Then
						the problem (\ref{M4}) has a unique solution $(\lambda,\eta)$ such that
						\begin{equation}
							(\lambda,\eta)\in X_{2}\times W_{2}
							\label{reg1}
						\end{equation}
					\end{theorem}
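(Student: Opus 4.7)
The plan is to reverse time via $s = T-t$, turning \eqref{M4} into a coupled forward-in-time linear parabolic system for $(\hat\lambda,\hat\eta)(s,x):=(\lambda,\eta)(T-s,x)$ with zero initial data and homogeneous Neumann boundary conditions. The coefficients inherit the regularity of $\tilde s\in\mathcal{S}_{ad}$; in the $2D$ setting one has $\tilde u\in W_2\hookrightarrow L^4(Q)$ (Ladyzhenskaya) and, from $\tilde v\in X_{2+}$, $\nabla\tilde v\in L^{4+}(Q)\cap L^\infty(L^{2+})$ by interpolating the embeddings $W^{2,2+}\hookrightarrow W^{1,\infty}$ and $W^{1+,2+}\hookrightarrow W^{1,2+}$. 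These are precisely the integrabilities needed to close the estimates below.

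For existence I would apply the Leray-Schauder fixed-point theorem (Theorem~\ref{LSFP}) to the operator $\mathcal{T}:(\bar\lambda,\bar\eta)\in L^{4-}(Q)\times L^2(Q)\mapsto (\hat\lambda,\hat\eta)$, where $\hat\eta\in W_2$ (with $\hat\eta(0)=0$) solves the $\eta$-equation after replacing $\lambda$ by $\bar\lambda$, so that its divergence-form source $\kappa\,\nabla\cdot(\tilde u\nabla\bar\lambda)$ lies in $L^2((H^1)')$ because $\tilde u\,\nabla\bar\lambda\in L^2(Q)$ by H\"older, and where $\hat\lambda\in X_2$ (with $\hat\lambda(0)=0$) solves the $\lambda$-equation after replacing $\eta$ by $\bar\eta$, by applying $L^2$-maximal regularity (Theorem~\ref{feireisl} with $p=2$) to the parabolic operator $\partial_s-\Delta$, noting that the remaining coefficient term $\kappa\,\nabla\bar\lambda\cdot\nabla\tilde v$ lies in $L^2(Q)$ via H\"older with $\nabla\bar\lambda\in L^{4-}$ and $\nabla\tilde v\in L^{4+}$. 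Compactness of $\mathcal{T}$ follows from the Aubin-Lions embeddings $X_2\hookrightarrow L^{4-}(Q)$ and $W_2\hookrightarrow L^2(Q)$, and continuity is a routine consequence of the linearity of the decoupled system.

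The core step is a uniform bound on all possible fixed points $(\hat\lambda,\hat\eta)=\alpha\mathcal{T}(\hat\lambda,\hat\eta)$ with $\alpha\in[0,1]$, obtained by testing the $\lambda$-equation by both $\hat\lambda$ and $-\Delta\hat\lambda$ and the $\eta$-equation by $\hat\eta$, then summing. After integration by parts, the most delicate term is $\kappa\,(\hat\eta,\nabla\cdot(\tilde u\nabla\hat\lambda))=-\kappa\,(\tilde u\nabla\hat\lambda,\nabla\hat\eta)$, which is bounded by $\|\tilde u\|_{L^4}\|\nabla\hat\lambda\|_{L^4}\|\nabla\hat\eta\|_{L^2}$ and absorbed via the $2D$ Gagliardo-Nirenberg inequality $\|\nabla\hat\lambda\|_{L^4}\le C\|\hat\lambda\|_{H^1}^{1/2}\|\hat\lambda\|_{H^2}^{1/2}$ together with Young's inequality, making essential use of the $\|\hat\lambda\|_{H^2}^2$ term produced by testing by $-\Delta\hat\lambda$; the term $\kappa\,\nabla\hat\lambda\cdot\nabla\tilde v$ in the $\lambda$-equation is treated in the same spirit. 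The outcome is a differential inequality of Gronwall type for $\|\hat\lambda\|_{H^1}^2+\|\hat\eta\|_{L^2}^2$ with an $L^1(0,T)$-weight built from $1+\|\tilde u\|_{L^4}^4+\|\nabla\tilde v\|_{L^{4+}}^{4+}+\|\tilde f\|_{L^{2+}}^2$; since $\hat\lambda(0)=\hat\eta(0)=0$ this gives the desired uniform bound, and the regularities $\partial_s\hat\lambda\in L^2(Q)$ and $\partial_s\hat\eta\in L^2((H^1)')$ follow by comparison from the equations, yielding $(\hat\lambda,\hat\eta)\in X_2\times W_2$.

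The main obstacle I anticipate is precisely this interpolation balancing: the coefficients $\tilde u$ and $\nabla\tilde v$ live at the threshold of integrability imposed by the weak-solution class, so the $H^2$-gain from the Gagliardo-Nirenberg interpolation has to be deployed tightly to produce absorbable terms while keeping the exponents of $\tilde u$, $\nabla\tilde v$, $\tilde f$ integrable in time. Once existence is established, uniqueness is immediate from linearity: the difference of two solutions satisfies the homogeneous problem with zero data and zero initial values, and the same energy estimate produces $y'\le g(s)\,y$ with $y(0)=0$ and $g\in L^1(0,T)$, forcing $y\equiv 0$.
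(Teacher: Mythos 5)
Your overall strategy coincides with the paper's: reverse time, run Leray--Schauder on a partially decoupled linearization, obtain the uniform bound on fixed points by testing the $\lambda$-equation with $\lambda-\Delta\lambda$ and the $\eta$-equation with $\eta$, absorb the cross term $\kappa(\tilde u\nabla\lambda,\nabla\eta)$ via the $2D$ Gagliardo--Nirenberg inequality and the $H^2$-dissipation of $\lambda$, close with Gronwall using the $L^1(0,T)$ weight built from $\|\tilde u\|_{L^4}^4$, $\|\nabla\tilde v\|_{L^4}^4$, $\|\tilde f\|_{L^{2+}}^2$, and get uniqueness from linearity. The energy estimates you describe are exactly the ones the paper performs.

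There is, however, a genuine inconsistency in your construction of the fixed-point operator. You take $(\bar\lambda,\bar\eta)\in L^{4-}(Q)\times L^2(Q)$ and then lag the \emph{gradient} couplings, feeding $\kappa\,\nabla\cdot(\tilde u\nabla\bar\lambda)$ into the $\eta$-equation and $\kappa\,\nabla\bar\lambda\cdot\nabla\tilde v$ into the $\lambda$-equation as sources; but $\bar\lambda\in L^{4-}(Q)$ carries no control whatsoever on $\nabla\bar\lambda$, so these sources are not defined on the stated domain and $\mathcal{T}$ is not well defined. The paper sidesteps this by lagging only the zero-order couplings ($\bar\eta$, $r\bar\lambda-2\mu\tilde u\bar\lambda$, $\tilde f\bar\eta 1_{\Omega_c}$) and keeping $\kappa\,\nabla\lambda\cdot\nabla\tilde v$ with the unknown $\lambda$ in the $\lambda$-equation, then inserting the \emph{computed} $\lambda\in X_2$ (so $\nabla\lambda\in L^4(Q)$) into the divergence term of the $\eta$-equation; with that decoupling the iteration space $L^{\infty-}\times L^{4-}$ only needs to see function values of $(\bar\lambda,\bar\eta)$. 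Your version can be repaired by enlarging the $\lambda$-component of the fixed-point space to something like $L^{4-}(0,T;W^{1,4-}(\Omega))$, into which $X_2$ still embeds compactly by Aubin--Lions, but as written the operator is ill-posed and the compactness claim does not apply. A second, minor point: if you keep $\kappa\,\nabla\bar\lambda\cdot\nabla\tilde v$ on the right-hand side, the $\lambda$-solve is a pure heat equation and Theorem~\ref{feireisl} with $p=2$ applies directly, whereas the paper's decoupling retains a drift term in the $\lambda$-equation and needs a (routine) variant of that regularity result; either way this is not where the difficulty lies.
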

					\begin{proof}
						Let $s=T-t$, with $t\in(0,T)$
						and $\tilde{\lambda}(s)=\lambda(t)$, $\tilde{\eta}(s)=\eta(t)$. Then, system (\ref{M4}) is equivalent to 
						\begin{equation}\label{R1}
							\left\{
							\begin{array}{rcl}
								\partial_s\tilde{\lambda}-\Delta\tilde{\lambda}
								-\kappa\,\nabla\tilde{\lambda}\cdot\nabla\tilde{v}-\tilde{\eta}
								-r \tilde\lambda + 2 \mu \tilde{u} \tilde\lambda 
								&=&\gamma_u (\tilde{u} - u_d)\quad \mbox{ in }Q,\\
								\partial_s\tilde{\eta}-\Delta\tilde{\eta}+\tilde{\eta}
								-\kappa\,\nabla\cdot(\tilde{u}\nabla\tilde{\lambda}) -\tilde{f}\,\tilde{\eta}\, 1_{\Omega_c}&=&\gamma_v(\tilde{v}-v_d)\quad \mbox{ in }Q,\\
								\tilde{\lambda}(0)=0,\ \tilde{\eta}(0)&=&0\quad \mbox{ in }\Omega,\\
								\dfrac{\partial\tilde{\lambda}}{\partial\n}=0,\ \dfrac{\partial\tilde{\eta}}{\partial\n}&=&0\quad \mbox{ on }(0,T)\times\partial\Omega.
							\end{array}
							\right.
						\end{equation}
						
						\
						
						In order to prove the existence  of a solution for (\ref{R1}), 
						the Leray-Schauder  fixed-point Theorem can be applied  as before, now over the operator
						\begin{equation}\label{operator-That}
							\widehat{T}:(\bar{\lambda},\bar{\eta})\in L^{\infty-}\times L^{4-}\mapsto(\lambda,\eta)\in X_{2}\times W_{2}
						\end{equation} 
						where 
						$(\lambda,\eta)=\widehat{T}(\bar{\lambda},\bar{\eta})$ solves the decoupled problem (first computing $\lambda$ and after $\mu$):
						\begin{equation}\label{R1-bis}
							\left\{
							\begin{array}{rcl}
								\partial_s {\lambda}-\Delta {\lambda}
								-\kappa\,\nabla{\lambda}\cdot\nabla\tilde{v}
								&=& \bar{\eta} 
								+r \bar\lambda - 2 \mu \tilde{u} \bar\lambda 
								+ \gamma_u (\tilde{u}-u_d)\quad \mbox{ in }Q,\\
								\partial_s {\eta}-\Delta  \eta
								+{\eta}
								- \kappa\,\nabla\cdot(\tilde{u}\nabla{\lambda})
								&=&
								\tilde{f}\, \bar{\eta}\, 1_{\Omega_c}
								+ \gamma_v(\tilde{v}-v_d)
								\quad \mbox{ in }Q,\\
								{\lambda}(0)=0,\ {\eta}(0)&=&0\quad \mbox{ in }\Omega,\\
								\dfrac{\partial{\lambda}}{\partial\n}=0,\ \dfrac{\partial{\eta}}{\partial\n}&=&0\quad \mbox{ on }(0,T)\times\partial\Omega,
							\end{array}
							\right.
						\end{equation}
						The proof follows the same lines and it will be omitted. Indeed, the key point is to show that the set of possible fixed-points 
						$$
						\widehat T_{\alpha}:=\{(\lambda,\eta)\in X_{2} \times W_{2}\,:\, (\lambda,\eta)=
						\alpha \widehat{T}(\lambda,\eta) \mbox{ for some }{\alpha}\in[0,1]\}
						$$ 
						is bounded in $X_{2} \times W_{2}$ (with respect to ${\alpha}$).  In fact, if $(\lambda,\eta)\in \widehat T_{\alpha},$ then $(\lambda,\eta)\in {X}_{2}\times {W}_{2}$ and solves the coupled linear problem:
						\begin{equation}\label{R1-bis-2}
							\left\{
							\begin{array}{rcl}
								\partial_s {\lambda}-\Delta {\lambda}
								+{\kappa} \, \nabla{\lambda}\cdot\nabla\tilde{v}
								-\alpha \,r \lambda + 2 \alpha \, \mu \tilde{u} \lambda 
								- {\alpha} \, {\eta}
								&=&
								{\alpha} \, \gamma_u (\tilde{u}- u_d)\quad \mbox{ in }Q,
								\\
								\partial_s {\eta}-\Delta  \eta
								+{\eta}-  \tilde{f}\, {\eta}\, 1_{\Omega_c}
								-\kappa\,\nabla\cdot(\tilde{u}\nabla{\lambda})
								&=&
								{\alpha} \, \gamma_v(\tilde{v}-v_d)\quad \mbox{ in }Q,\\
								{\lambda}(0)=0,\ {\eta}(0)&=&0\quad \mbox{ in }\Omega,\\
								\dfrac{\partial{\lambda}}{\partial\n}=0,\ \dfrac{\partial{\eta}}{\partial\n}&=&0\quad \mbox{ on }(0,T)\times\partial\Omega.
							\end{array}
							\right.
						\end{equation}
						Now, by taking $\lambda - \Delta \lambda \in L^2(Q)$ as test function in (\ref{R1-bis-2})$_1$ and  $\eta \in L^2(H^1)$ as test function in (\ref{R1-bis-2})$_2$, then the following bound is obtained via the Gronwall Lemma 
						$$
						\Vert (\lambda,\eta) \Vert_{X_2\times W_2} 
						\le 
						C( \Vert \tilde{u} \Vert_{W_2},
						\Vert \tilde{v} \Vert_{X_2},
						\Vert \tilde{f} \Vert_{L^{2+}(Q_c)}, 
						\Vert u_d \Vert_{L^2(Q)},
						\Vert v_d \Vert_{L^2(Q)} ).
						$$

						Therefore, by applying Leray-Schauder fixed-point theorem, 
						the existence of a solution of  problem \eqref{M4}, 
						$(\lambda,\eta)\in X_{2}\times W_{2}$, is obtained. 
						The uniqueness of solution is directly deduced from the linearity of  problem \eqref{M4}.
					\end{proof}
					
					\
					
					In the following result,  more regularity and uniqueness of the  Lagrange multiplier $(\lambda,\eta)$ than provided by Theorem \ref{lagrange} will be obtained via uniqueness of the problem \eqref{M4}.
					\begin{theorem}\label{regularity-multiplier}
						Let $\tilde{s}=(\tilde{u},\tilde{v},\tilde{f})\in\mathcal{S}_{ad}$ be a local optimal solution for the control problem (\ref{func}). Then the Lagrange
						multiplier, provided by Theorem \ref{lagrange}, is unique and satisfies $(\lambda,\eta)\in X_{2}\times W_{2}$.
					\end{theorem}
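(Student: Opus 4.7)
The approach is to combine Theorem \ref{regularity_lagrange}, which furnishes a unique strong solution $(\lambda^*,\eta^*)\in X_{2}\times W_{2}$ of the adjoint system \eqref{M4}, with a uniqueness argument for the Lagrange multiplier in the lower-regularity space $L^{2}(H^1)\times L^{2-}(Q)$. Once uniqueness in this larger space is established and a strong representative in $X_2\times W_2$ is exhibited, any Lagrange multiplier delivered by Theorem \ref{lagrange} must coincide with that strong representative, yielding both assertions of the theorem simultaneously.

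The first step is to verify that the regular pair $(\lambda^*,\eta^*)$ from Theorem \ref{regularity_lagrange} is indeed a Lagrange multiplier in the sense of \eqref{M2}--\eqref{M3}. This is a direct integration-by-parts computation: for any $U\in \widehat{W}_2$ and $V\in\widehat{X}_{2+}$, one multiplies \eqref{M4}$_1$ by $U$ and \eqref{M4}$_2$ by $V$, integrates over $Q$, and transfers the time derivative using the terminal conditions $\lambda^*(T)=\eta^*(T)=0$ together with $U(0)=V(0)=0$, and the space derivatives using the homogeneous Neumann boundary conditions on both $(\lambda^*,\eta^*)$ and on the test functions. The term $\kappa\,\nabla\cdot(\tilde{u}\nabla\lambda^*)$ can be integrated by parts legitimately since $\lambda^*\in X_2$ and $\tilde u\in W_2$ give $\tilde u\,\nabla\lambda^*\in L^2(Q)$. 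The outcome is precisely the identities \eqref{M2}--\eqref{M3} with $(\lambda,\eta)=(\lambda^*,\eta^*)$.

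For uniqueness in $L^{2}(H^1)\times L^{2-}(Q)$, let $(\lambda_1,\eta_1)$ and $(\lambda_2,\eta_2)$ be two Lagrange multipliers and set $(\delta\lambda,\delta\eta):=(\lambda_1-\lambda_2,\eta_1-\eta_2)$. Subtracting the identities \eqref{M2}--\eqref{M3} for the two multipliers gives
\begin{equation*}
\langle \delta\lambda,\,G_1'(\tilde{s})[U,V,0]\rangle + (\delta\eta,\,G_2'(\tilde{s})[U,V,0]) \;=\; 0
\quad\forall\,(U,V)\in \widehat{W}_2\times\widehat{X}_{2+}.
\end{equation*}
The proof of Lemma \ref{regular-lemma}, specialised to $F=0$, shows that the linear map $(U,V)\in \widehat{W}_2\times\widehat{X}_{2+}\mapsto G'(\tilde{s})[U,V,0]$ is onto $L^{2}((H^1)')\times L^{2+}(Q)$. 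Hence the duality pairing of $(\delta\lambda,\delta\eta)$ with every element of $L^{2}((H^1)')\times L^{2+}(Q)$ vanishes, which forces $\delta\lambda=0$ in $L^{2}(H^1)$ and $\delta\eta=0$ in $L^{2-}(Q)$. Combining the two steps, the Lagrange multiplier is unique in $L^{2}(H^1)\times L^{2-}(Q)$ and coincides with $(\lambda^*,\eta^*)\in X_2\times W_2$, proving \eqref{reg1}.

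The main delicate point I expect is the surjectivity reduction in the uniqueness argument: Lemma \ref{regular-lemma} is stated for variations $F\in\mathcal{C}(\tilde f)$, but here we need to fix $F=0$ and still solve \eqref{C9} for arbitrary right-hand sides in $L^{2}((H^1)')\times L^{2+}(Q)$. A careful reading of that lemma's proof (where $F$ plays no role in the fixed-point construction since the control is absorbed into $\tilde f$ on the left-hand side) shows that the surjectivity holds with $F=0$, so the reduction is legitimate. The remaining verifications are routine parabolic integrations by parts, which are licit given the regularities $X_2$ and $W_2$ already established in Theorem \ref{regularity_lagrange}.
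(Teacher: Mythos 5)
Your proposal is correct and follows essentially the same route as the paper: both verify via integration by parts that the strong solution of \eqref{M4} from Theorem \ref{regularity_lagrange} satisfies the weak identities \eqref{M2}--\eqref{M3}, and both exploit the surjectivity of the linearized map $(U,V)\mapsto G'(\tilde{s})[U,V,0]$ established in Lemma \ref{regular-lemma} (with $F=0$, which is exactly the case treated there) to test the difference against arbitrary $(g_u,g_v)\in L^2((H^1)')\times L^{2+}(Q)$ and conclude it vanishes. The only cosmetic difference is that you phrase the argument as ``uniqueness of multipliers plus a strong representative'' whereas the paper directly identifies the given multiplier with the strong solution; the underlying computation is identical.
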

					\begin{proof}
						Let $(\lambda,\eta)\in L^{2}(H^1)\times L^{2-}(Q)$ be a Lagrange multiplier given in Theorem \ref{lagrange}, which is a very weak solution of problem (\ref{M4}). 
						In particular, $(\lambda,\eta)$ satisfies (\ref{M2})-(\ref{M3}). On the other hand, from Theorem \ref{regularity_lagrange}, 
						system (\ref{M4}) has a unique solution $(\overline{\lambda},\overline{\eta})\in X_{2}\times W_{2}$. 
						Then, it suffices to identify
						$(\lambda,\eta)$ with $(\overline{\lambda},\overline{\eta})$. 
						
						\
						
						With this objective,  for any  $(U,V)\in \widehat{W}_2 \times \widehat{X}_{2+}$, we 
						writte (\ref{M4}) for $(\overline{\lambda},\overline{\eta})$ (instead of $(\lambda,\eta)$), 
						testing the first equation by $U$, and the second one by $V$, 
						and  integrating by parts in $\Omega$, it is obtained
						\begin{equation}\label{RL2}
							\int_0^T \langle \partial_tU, \overline{\lambda} \rangle 
							+ \int_0^T \int_\Omega (\nabla U
							-\kappa \,U\nabla\tilde{v}) \cdot \nabla\overline{\lambda}
							+ (-r U + 2 \mu \tilde{u} U ) \overline\lambda
							- U\overline{\eta}
							=\gamma_u\int_0^T\int_{\Omega} (\tilde{u}-u_d) U,
						\end{equation}
						\begin{equation}\label{RL3}
							\int_0^T\int_\Omega\bigg(\partial_tV-\Delta V+V-\tilde{f}V 1_{\Omega_c}\bigg)\overline{\eta}
							+ \kappa\, \tilde{u}\nabla V\cdot \nabla\overline{\lambda}
							=\gamma_v\int_0^T\int_{\Omega}(\tilde{v}-v_d)V.
						\end{equation}
						Making the difference between (\ref{M2}) for $(\lambda,\eta)$ and (\ref{RL2}) for $(\overline{\lambda},\overline{\eta})$, and between (\ref{M3}) and (\ref{RL3}), and then adding the respective 
						equations, since the right-hand side terms vanish,   it can be deduced
						\begin{eqnarray*}
							&& \int_0^T \langle \partial_tU, \lambda-\overline{\lambda} \rangle_{(H^1)'}
							+ \int_0^T \int_\Omega (\nabla U
							-\kappa \,U\nabla\tilde{v}- \kappa\, \tilde{u}\nabla{V}) \cdot \nabla(\lambda - \overline{\lambda})
							\nonumber\\
							&& + \int_0^T \int_\Omega (-r U + 2 \mu \tilde{u} U , \lambda-\overline{\lambda})
							\nonumber\\
							&&
							+\int_0^T\int_\Omega\bigg(\partial_tV-\Delta V+V-U-\tilde{f}V 1_{\Omega_c}\bigg)(\eta-\overline{\eta})=0.
						\end{eqnarray*}
						Then, if  $(U,V)\in \widehat{W}_2 \times \widehat{X}_{2+}$ is 
						the unique solution  of linear system 
						(\ref{C9}) associated to any  $(g_u,g_v)\in L^2((H^1)')\times L^{2+}(Q)$ (given by Lemma \ref{regular-lemma}), we arrive at 
						\begin{equation}\label{RL4}
							\int_0^T\langle g_u, \lambda-\overline{\lambda} \rangle_{(H^1)'}+\int_0^T\int_\Omega g_v(\eta-\overline{\eta})=0.
						\end{equation}
						%	
						%	$$g_u:=(\lambda-\overline{\lambda})\in L^2(H^1) \quad \hbox{and} \quad g_v:=(\eta-\overline{\eta}) \in L^{2-}(Q)
						%	$$
						%	is considered (see Lemma \ref{regular-lemma}). 
						%
						%
						%	Therefore, taking into account that $(U,V)$ is the unique solution of (\ref{C9}) for $g_u=(\lambda-\overline{\lambda})$ 
						%	and $g_v=(\eta-\overline{\eta})$, from (\ref{RL4})   it is deduced 
						%	$$
						%	\|\lambda-\overline{\lambda}\|^{2}_{L^{2}(Q)}+\|\eta-\overline{\eta}\|^{2}_{L^{2}(Q)}=0,
						%	$$
						By density arguments, it is easy to deduce that $\lambda-\overline{\lambda} =0$ and $\eta-\overline{\eta}=0$, 
						which implies that $(\lambda,\eta)=(\overline{\lambda},\overline{\eta})$. 
						As a consequence of the regularity of $(\overline{\lambda},\overline{\eta})$,  it holds that
						$(\lambda,\eta)\in X_{2}\times W_{2}$.
					\end{proof}
					All previous arguments of Section \ref{Sec:LM} prove Theorem \ref{Th:Optimality-System}.
					
					\addcontentsline{toc}{section}{Appendix: Existence of Strong Solutions of Problem (\ref{regg-1})}

					%%%%%%%%%


\begin{thebibliography}{99}
						
						
						
						\bibitem{Abergel}  Abergel, F., Casas, E.: Some optimal control problems of multistate equations appearing in fluid mechanics. RAIRO Modél.
						Math. Anal. Numér. Vol 27,  223–247 (1993).
						
						\bibitem{adams} Adams, R.: Sobolev spaces.
						Academic Press, New York (1975).
						
						\bibitem{Amann} Amann, H.: Nonhomogeneous linear and quasilinear elliptic and parabolic boundary value problems, in: Function Spaces, Differential Operators and Nonlinear Analysis. H. Triebel and H.J. Schmeiser (eds.), Teubner-texte Math. 133, Teubner, Stuttgart, 1993, 9-126.
						
						\bibitem{BBTW} Bellomo, N.; Bellouquid, A.; Tao, Y.; Winkler, M. 
						{\sl Toward a mathematical theory of Keller-Segel models of pattern formation in biological tissues}. 
						Math. Models Methods Appl. Sci. 25 (2015), no. 9, 1663--1763.
						
						
						\bibitem{Berselli} Berselli L.C.; Fan, J.:  Logarithmic and improved regularity criteria for the 3D nematic liquid crystals models, Boussinesq system, and MHD equations in a bounded domain. Commun. Pure Appl. Anal. 14 (2015), no. 2, 637-655.
						
						\bibitem{brezis} Br\'ezis, H.: Functional analysis. Sobolev spaces and partial differential equations. Springer, New York (2010).
						%	
						%	\bibitem{casas-1} Casas, E.: An optimal control problem governed by the evolution Navier-Stokes equations. Optimal control of viscous flow, 79-95,
						%SIAM, Philadelphia, PA (1998).
						%
						%\bibitem{casas-2} Casas, E., Chrysafinos, K.: Analysis of the velocity tracking control problem for the 3D evolutionary Navier-Stokes equations.
						%SIAM J. Control Optim. Vol. 54, no. 1, 99-128 (2016).
						
						\bibitem{casas} Casas, E.:  An optimal control problem governed by the evolution Navier-Stokes equations. In Optimal control of viscous flows,
						Frontiers in applied mathematics, edited by S.S. Sritharan. SIAM, Philadelphia (1998).
						
						\bibitem{chaves-guerrero-1} Chaves-Silva, F.W., Guerrero, S.: A uniform controllability for the Keller-Segel system.
						Asymptot. Anal. Vol 92, no. 3-4, 313-338 (2015).
						
						\bibitem{chaves-guerrero-2} Chaves-Silva, F.W., Guerrero, S.: A controllability result for a chemotaxis-fluid model.
						J. Diff. Equations Vol. 262, no. 9, 4863-4905 (2017).
						
						\bibitem{cieslak} Cieslak, T., Lauren\c cot, P., Morales-Rodrigo, C.: Global existence and convergence to steady states in a chemorepulsion system.
						Parabolic and Navier-Stokes equations. Part 1, 105-117, Banach Center Publ., 81, Part 1, Polish Acad. Sci. Inst. Math., Warsaw, 2008.
						
						\bibitem{dearaujo} De Araujo, A.L.A., Magalh\~aes, P.M.D.: Existence of solutions and optimal control for a model of tissue invasion
						by solid tumours. J. Math. Anal. Appl. Vol. 421, 842-877 (2015).
						
						\bibitem{Rodriguez-Ferreira-Roa} Duarte-Rodríguez, A., Ferreira, L. C. F., Villamizar-Roa, E. J.: Global existence for an attraction-repulsion chemotaxis-fluid
						model with logistic source. Discrete Contin. Dyn. Syst. Ser. Vol. 24, 423-447 (2019).  
						
						\bibitem{feireisl} Feireisl, E., Novotn\'y, A.: Singular limits in thermodynamics of viscous fluids. Advances in Mathematical Fluid Mechanics. Birkh\"auser Verlag, Basel (2009).
						
						\bibitem{fister-mccarthy} Fister, K.R., Mccarthy, C.M.: Optimal control of a chemotaxis system. Quart. Appl. Math. Vol. 61, no. 2, 193-211 (2003).
						
						\bibitem{friedman} Friedman, A.: Partial differential equations. Holt, Rinehart and Winston Inc. (1969).
						
						\bibitem{guillen-mallea-rodriguez} Guill\'en-Gonz\'alez, F., Mallea-Zepeda, E., Rodr\'iguez-Bellido, M.A.: Optimal bilinear control problem related to a chemo-repulsion system
						in 2D domains. ESAIM Control Optim. Calc. Var. 26 (2020), Paper No. 29, 21 pp. 
						DOI:10.1051/cocv/2019012. 
						
						\bibitem{guillen-mallea-villamizar} Guillén-González, F.; Mallea-Zepeda, E.; Villamizar-Roa, E.J:  On a Bi-dimensional chemo-repulsion model with nonlinear production and a related optimal control problem. Acta Applicandae Mathematicae.
						Vol.170, No.1, 963 - 979 (2020) 
						
						\bibitem{guillen-mallea-rodriguez3}
						 Guillén-González, F.; Mallea-Zepeda, E.; Rodríguez-Bellido, M.A. A Regularity Criterion for a 3D Chemo-Repulsion System and Its Application to a Bilinear Optimal Control Problem. SIAM J. Control and Optimization, 58, no. 3 (2020), 1457-1490. 

						
						\bibitem{karl-wachsmuth} Karl, V., Wachsmuth, D.: An augmented Lagrange method for elliptic  state constrained optimal control problems.
						Comp. Optim. Appl. Vol. 69, 857-880 (2018).
						
						\bibitem{keller-segel} Keller, E.F., Segel, L.A.: Initiation of slime mold aggregation viewed as an instability.
						J. Theo. Biol. Vol. 26, 399-415 (1970).
						
						\bibitem{Lankeit2016} Lankeit, J.: Long-term behaviour in a chemotaxis-fluid system with logistic source. Math. Models Methods Appl. Sci. Vol. 26, 2071–2109 (2016)
						
						\bibitem{lions} Lions, J.L.; Quelques m\'etodes de r\'esolution des probl\`emes aux limites non lin\'eares. Dunod, Paris, 1969.
						
						\bibitem{lions-magenes} Lions, J.L., Magenes, E.: Probl\`emes aux limites non homog\`enes et applications, Vol. 1. Travaux et recherches math\'ematiques, No. 17 Dunod, Paris 1968.
						
						\bibitem{Liu-Lorz-2011} Liu J.-G., Lorz, A.:  A coupled chemotaxis-fluid model: Global existence, Ann. Inst.H. Poincaré Anal. Non Linéaire Vol. 28, 643-652 (2011).   
						
						\bibitem{Lopez-Roa} López-Ríos, J., Villamizar-Roa, E. J.: An optimal control problem related to a 3D-Chemotaxis-Navier-Stokes model, ESAIM Control optim. Calc. Var. 27, 37pp (2021). 
						
						\bibitem{Zepeda-Torres-Roa} Mallea-Zepeda, E.,  Ortega-Torres, E., Villamizar-Roa, E. J.: A boundary control problem for micropolar fluids. J. Optim.
						Theory Appl. Vol 169, 349–369 (2016). 
						
						\bibitem{rodriguez-rueda-villamizar} Rodr\'iguez-Bellido, M.A., Rueda G\'omez, D.A., Villamizar-Roa, E.J.: On a distributed control problem for a coupled chemotaxis-fluid model.
						Discrete Contin. Dyn. Syst. B. Vol. 23, no. 2, 557-571 (2018).
						
						\bibitem{ryu-yagi} Ryu, S.-U., Yagi, A.: Optimal control of Keller-Segel equations.
						J. Math. Anal. Appl. Vol. 256, no. 1, 45-66 (2001).
						
						\bibitem{ryu} Ryu, S.-U.: Boundary control of chemotaxis reaction diffusion system.
						Honam Math. J. Vol. 30, no. 3, 469-478 (2008).
						
						\bibitem{simon} Simon, J.; Compact sets in the space $L^p(0,T;B)$. Ann. Mat. Pura Appl. Vol. 146, 65-96, 1987.
						
						\bibitem{tao} Tao, Y.: Global dynamics in a higher-dimensional repulsion chemotaxis model with nonlinear sensitivity. Discrete Contin. Dyn. Syst. B. Vol. 18, no. 10, 2705-2722 (2013).
						
						\bibitem{troltz} Tr\"{o}ltzsch, F.: Optimal control of partial differential equations. Theory, methods and applications. AMS Providence, Rhode Island (2010).
						
						\bibitem{winkler2012} Winkler, M.: Global large-data solutions in a chemotaxis-Navier-Stokes system modeling cellular swimming in fluid drops. Comm. Partial Differential Equations. Vol 37, 319-351 (2012).
						
						\bibitem{winklerJFA2019} Winkler, M.: A three-dimensional Keller-Segel-Navier-Stokes system with logistic source: global weak solutions and asymptotic stabilization.  J. Funct. Anal. Vol. 276, 1339–1401 (2019).
						
						
						\bibitem{zowe} Zowe, J., Kurcyusz, S.: Regularity and stability for the mathematical programming problem in Banach spaces.
						Appl. Math. Optim. Vol. 5, 49-62 (1979).
						
					\end{thebibliography}
				\end{document}